\newcommand{\C}{\mathbb{C}}
\newcommand{\N}{\mathbb{N}}
\newcommand{\R}{\mathbb{R}}
\newcommand{\T}{\mathbb{T}}
\newcommand{\Z}{\mathbb{Z}}
\newcommand{\F}{\mathcal{F}}
\newcommand{\set}[1]{\left\{#1\right\}}
\newcommand{\paren}[1]{\left(#1\right)}
\newcommand{\norm}[1]{\|#1\|}
\newcommand{\JBX}[1][x]{\langle#1\rangle}
\newcommand{\1}[1]{\frac{1}{#1}}
\let\d\relax
\DeclareMathOperator{\d}{d \!}
\newtheorem{theorem}{Theorem}
\newtheorem*{definition}{Definition}
\newtheorem{proposition}[theorem]{Proposition}
\newtheorem{lemma}[theorem]{Lemma}
\newtheorem{corollary}[theorem]{Corollary}
\newtheorem{remark}[theorem]{Remark}
\numberwithin{equation}{section}
\numberwithin{theorem}{section}
\author{Joseph Adams}
\address{Heinrich-Heine-Universität Düsseldorf, Mathematisches Institut, Universitätsstraße~1, 40225 Düsseldorf, Germany}
\email{joseph.adams@hhu.de}
\subjclass[2020]{Primary: 35Q55. Secondary: 35Q41}
\keywords{dNLS hierarchy equations --- low regularity local well-posedness --- Fourier restriction norm method}
\title{Well-posedness for the dNLS hierarchy}
\begin{document}
    \begin{abstract}
    We prove well-posedness for higher-order equations in the so-called dNLS hierarchy (also known as part of the Kaup-Newell hierarchy) in almost critical Fourier-Lebesgue and in modulation spaces.
    Leaning in on estimates proven by the author in a previous instalment~\cite{Adams2024}, where a similar well-posedness theory was developed for the equations of the NLS hierarchy, we show the $j$th equation in the dNLS hierarchy is locally well-posed for initial data in $\hat H^s_r(\R)$ for $s \ge \1{2} + \frac{j-1}{r'}$ and $1 < r \le 2$ and also in $M^s_{2, p}(\R)$ for $s \ge \frac{j}{2}$ and $2 \le p < \infty$. Supplementing our results with corresponding ill-posedness results in Fourier-Lebesgue and modulation spaces shows optimality.

    Our arguments are based on the Fourier restriction norm method in Bourgain spaces adapted to our data spaces  and the gauge-transformation commonly associated with the dNLS equation. For the latter we establish bi-Lipschitz continuity between appropriate modulation spaces and that even for higher-order equations `bad' cubic nonlinear terms are lifted from the equation.
    \end{abstract}
	\maketitle
    \tableofcontents

    \clearpage


\section{Introduction}\label{sec:introduction}

The derivative nonlinear Schrödinger (dNLS) equation
\begin{equation}\label{eq:dnls}
    \begin{cases}
    i\partial_t u + \partial_x^2 u = i\partial_x (|u|^2 u)\\
    u(t = 0) = u_0
    \end{cases}
\end{equation}
with initial data $u_0$, is a canonical object of study in the field of well-posedness theory for dispersive PDE. It arises as a model in various branches of physics, ranging from the propagation of circularly polarized Alfvén waves in magnetized plasma to the propagation of ultra-short pulses in optical fibers. We direct the interested reader to~\cites{MR462141,agrawal2000nonlinear,PhysRevA.27.1393,PhysRevA.23.1266} for an overview of its origins.

Its analysis, in the sense of low-regularity well-posedness, compared with its closely related cousin, the (de)focusing cubic nonlinear Schrödinger (NLS) equation
\begin{equation}\label{eq:nls}
    i\partial_t u + \partial_x^2 u = \pm 2|u|^2 u,
\end{equation}
is considered to be strictly more difficult, because of the additional derivative in the nonlinearity. In particular, one of the nonlinear terms $|u|^2\partial_x u$ in~\eqref{eq:dnls} is much less well behaved than the remaining term $u^2\partial_x \overline{u}$.

One way to absolve the equation of this `issue' and still be able to achieve well-posedness within the framework of the Fourier restriction norm method, or more generally by fixed-point arguments, is by utilising the gauge-trans\-for\-ma\-tion
\begin{equation}\label{eq:gauge-transformation}
    u(x, t) \mapsto v(x, t) \coloneqq \exp\left(-i \int_{-\infty}^{x} |u(y, t)|^2 \d{y}\right) u(x, t)
\end{equation}
which removes the ill-behaved $|u|^2\partial_x u$ by translating~\eqref{eq:dnls} to the equation
\begin{equation}\label{eq:gt-dnls}
    i\partial_t v + \partial_x^2 v = -i v^2 \partial_x \overline{v} - \1{2} |v|^4 v
\end{equation}
for an unknown function $v$. (The initial value is also adapted in an appropriate fashion.) The continuity properties of the gauge-transformation then ensure essentially\footnote{Using the gauge-transformation muddies the uniqueness properties of the solution. See Remark~\ref{remark:uniqueness} where this issue is further discussed.} the equivalence of Cauchy problems associated with both~\eqref{eq:dnls} and~\eqref{eq:gt-dnls}. See~\cites{AG2005,HayashiOzawa1992,Guo2021,Takaoka1999} and the references therein, where this approach has successfully been applied in a variety of function spaces.

Though even after transformation, solely using energy or smoothing estimates does not suffice to prove (near optimal) local well-posedness results. As was layed out in~\cite{AG2005}, for certain frequency constellations one is forced to exploit the resonance relation to eke out a fraction (in the $L^2$-based setting) of a derivative in order to close a contraction argument. So there is certainly some added complexity when dealing with the dNLS equation in comparison to the NLS equation.

Furthermore, the dNLS equation is a completely integrable system, which entails but is not limited to possessing an infinite hierarchy of conserved quantities and being induced by (one of) the first of these quantities. Subsequent equations may be induced in a similar fashion to produce what we refer to in the title of this paper as \emph{the dNLS hierarchy}. As the NLS equation is also completely integrable, one can analogously look at an NLS hierarchy. (How these conserved quantities are derived for dNLS and what is meant by `induce' will be made more precise in Section~\ref{sec:dnls-derivation}.)

Grounded in the recently published paper~\cite{Adams2024} by the author, in which the well-posedness theory of the NLS hierarchy is studied, the natural question arises what a similar theory would look like for the dNLS hierarchy, keeping in mind its added complexities?

Goal of the present paper is to (at least partially) answer this question. More precisely we will be proving low-regularity well-posedness results for a general class of PDE, encompassing all equations in the dNLS hierarchy, in classical Sobolev spaces $H^s(\R)$, Fourier-Lebesgue spaces $\hat H^s_r(\R)$ (sometimes written as $\F L^{s,r'}(\R)$ in the literature) and modulation spaces $M^s_{2,p}(\R)$ defined by the norms
\begin{equation}
    \norm{u}_{\hat H^s_r} = \norm{u}_{\F L^{s,r'}} = \norm{\JBX[\xi]^s \hat u}_{L^{r'}} \quad\text{and}\quad \norm{u}_{M^s_{2,p}} = \norm{\JBX[n]^s \norm{\Box_n u}_{L^2}}_{\ell^p_n(\Z)}
\end{equation}
respectively, with a family of isometric decomposition operators $(\Box_n)_{n\in\Z}$. We refer to the author's previous work~\cite{Adams2024}*{Section~1.2} for precise definitions and an overview of properties, i.e. embeddings, interpolation and duality theory of these function spaces.

While of course we embrace the integrability structure of the dNLS hierarchy equations for their derivation, we will not be making use of it for proving our well-posedness results. Rather we welcome the fact that our techniques enable us to prove well-posedness for a much larger class of PDE (that nevertheless includes the dNLS hierarchy equations), due to their robustness towards changes in the PDE that lead to them no longer being completely integrable.

The techniques we will be using to argue well-posedness are the Fourier restriction norm method in appropriate Bourgain spaces $X_{s,b}$ adapted to our data spaces, together with bilinear refinements of Strichartz estimates. We will also be heavily leaning in on the estimates proven for the NLS hierarchy equations in~\cite{Adams2024} by the author and general smoothing estimates of Kato type. As a convenience we recall all necessary estimates in Section~\ref{sec:known-estimates}.

\subsection{Notation and function space properties}

As the present paper may be viewed as a continuation or extension of the author's previous work on the NLS hierarchy, we will refrain from (re)defining our notational conventions and instead refer the reader to~\cite{Adams2024}*{Section~1.2} for reference on such matters.

In addition, we will be using some estimates for modulation spaces not yet given in~\cite{Adams2024} so we will use this opportunity to cite these from the literature.
Of particular use will be a Sobolev-type embedding adapted to modulation spaces, a proof of which may be found in~\cite{Chaichenets2018}*{Prop.~2.31}: Let $s_1, s_2 \in\R$ and $1 \le p,q_1,q_2 \le \infty$ then
\begin{equation}\label{eq:modulation-sobolev}
\norm{f}_{M^{s_1}_{p,q_1}(\R^n)} \lesssim \norm{f}_{M^{s_2}_{p,q_2}(\R^n)} \quad\text{if and only if}\quad s_1 - s_2 > \frac{n}{q_2} - \frac{n}{q_1} > 0.
\end{equation}

The other estimates we will be needing are all with regard to multiplication of modulation space functions. We start by mentioning the well known fact that $M_{\infty, 1}$ is a Banach-Algebra, see~\cite{Chaichenets2018}*{Prop.~4.2}. In fact, as is also mentioned after that Proposition, since $M^s_{p,q}(\R^n)$ continuously embeds into $M_{\infty, 1}(\R^n)$, if $q=1$ and $s \ge 0$, or if $q > 1$ and $s > \frac{n}{q'}$, we know $M^s_{p,q}(\R^n)$ also to be an algebra in those cases.

More generally we have a form of generalised Leibniz rule for modulation spaces: Let $s \ge 0$ and $1 \le p, p_1, p_2, \tilde{p}_q, \tilde{p}_2, q, q_1, q_2, \tilde{q}_1, \tilde{q}_2 \le \infty$, such that $\1{p} = \1{p_1} + \1{p_2} = \1{\tilde p_1} + \1{\tilde p_2}$ and $\1{q'} = \1{q_1'} + \1{q_2'} = \1{\tilde q_1'} + \1{\tilde q_2'}$, then
\begin{equation}\label{eq:modulation-leibniz}
\norm{fg}_{M^s_{p,q}(\R^n)} \lesssim \norm{f}_{M^s_{p_1, q_1}(\R^n)} \norm{g}_{M_{p_2,q_2}(\R^n)} + \norm{f}_{M_{\tilde{p}_1, \tilde{q}_1}(\R^n)} \norm{g}_{M^s_{\tilde{p}_2, \tilde{q}_2}(\R^n)}.
\end{equation}
Taking the uniform-decomposition definition of modulation spaces as known, as simple proof is as follows: We rewrite $\Box_m (f g)$ as $\sum_{k+\ell=m} (\Box_k f) (\Box_\ell g)$ using knowledge of the support of convolutions.
\begin{align*}
	\norm{fg}_{M^s_{p,q}} &= \norm{ \norm{\JBX[m]^s \Box_m (f g)}_{L^p} }_{\ell^q_m(\Z)} \lesssim \norm{ \norm{\JBX[m]^s \sum_{k+\ell=m} (\Box_k f) (\Box_\ell g)}_{L^p} }_{\ell^q_m(\Z)}\\
	&\lesssim \norm{ \sum_{k+\ell=m} (\JBX[k]^s + \JBX[\ell]^s) \norm{(\Box_k f) (\Box_\ell g)}_{L^p} }_{\ell^q_m(\Z)}
	\intertext{After applying the triangle inequality $\JBX[m]^s \lesssim \JBX[k]^s+\JBX[\ell]^s$ we use Hölder's inequality depending on which weight is present. Finishing the proof with applications of the triangle and Young's inequality we arrive at the desired upper bound.}
	&\lesssim \norm{ \sum_{k+\ell=m} \JBX[k]^s \norm{\Box_k f}_{L^{p_1}} \norm{\Box_\ell g}_{L^p_2}}_{\ell^q_m(\Z)} + \norm{\JBX[\ell]^s \norm{\Box_k f}_{L^{\tilde{p}_1}} \norm{\Box_\ell g}_{L^{\tilde{p}_2}}}_{\ell^q_m(\Z)}\\
	&\lesssim \norm{f}_{M^s_{p_1, q_1}} \norm{g}_{M^0_{p_2,q_2}} + \norm{f}_{M^0_{\tilde{p}_1,\tilde{q}_1}} \norm{g}_{M^s_{\tilde{p}_2,\tilde{q}_2}}.
\end{align*}
For further properties of modulation spaces we recommend consulting~\cites{Benyi2020,Chaichenets2018}.

In addition we will be using the classic Gagliardo-Nirenberg inequality in deriving a-priori bounds for the dNLS hierarchy equations. We take advantage of the phrasing from~\cite{GagliardoNirenberg}: Let $1 \le r,p,q \le \infty$, $\ell \in \N_0$, $k \in \N$ and $\frac{\ell}{k} \le \theta \le 1$ such that
\begin{equation}
    \1{r} - \frac{\ell}{n} = \theta\paren{\1{p}-\frac{k}{n}} + (1-\theta)\1{q}
\end{equation}
holds. Then one has the inequality
\begin{equation}\label{eq:gagliardo-nirenberg}
    \norm{\nabla^\ell f}_{L^r(\R^n)} \lesssim \norm{\nabla^k f}_{L^p(\R^n)}^\theta \norm{f}_{L^q(\R^n)}^{1-\theta}
\end{equation}
under the additional constraints that $\theta < 1$ if $r = \infty$ and $1 < p < \infty$; or $f$ is vanishing at infinity if $q = \infty$, $k < \frac{n}{p}$ and $\ell = 0$.

\subsection{Organisation of the paper}

In Section~\ref{sec:dnls-derivation} we will be deriving and defining what is referred to in the title of this paper as the dNLS hierarchy. We will also review what is known about the gauge-transformation associated with the dNLS equation. In addition we will prove its continuity as a map between appropriate modulation spaces and argue that applied to the higher-order dNLS hierarchy equations it also leads to more well-behaved models. We will be referring to these more well-behaved models as gauged dNLS equations and make reference to them in our well-posedness theorems.

Then in Section~\ref{sec:statement-of-results} we quickly review prior work associated with (higher-order) dNLS equations before stating our main results, followed by a discussion of the latter.

Moving towards proofs of the theorems, in Section~\ref{sec:known-estimates}, we give an overview of the linear and multilinear estimates from~\cite{Adams2024} that we will be using to argue well-posedness for higher-order dNLS hierarchy/gauged dNLS equations, for the reader's convenience. In addition we will be making use of an estimate for the resonance relation which we take from the literature.

The proofs for Theorems~\ref{thm:gauged-wellposed-hat} and~\ref{thm:gauged-wellposed-modulation} are contained in Section~\ref{sec:multilinear-estimates}, where first we deal with estimates regarding well-posedness in Fourier-Lebesgue spaces, followed by the same for modulation spaces. The Theorems~\ref{thm:hierarchy-wellposed-hat-spaces} and~\ref{thm:hierarchy-wellposed-modulation-spaces} regarding well-posedness of the dNLS hierarchy equations themselves follow from the former and use of the gauge-transformation.

In Section~\ref{sec:illposedness} we give proofs of our ill-posedness results associated with higher-order dNLS equations. These show that our well-posedness results are optimal (up to the endpoint) and that within the framework of techniques we are using, no lower threshold of initial regularity of the data is possible, while still achieving local well-posedness results.

To wrap up, in Appendix~\ref{sec:appendix} we list the first few equations of the dNLS hierarchy together with their gauge-transformed variants where appropriate. This shall serve as a point of reference and give the interested reader an overview of what typical nonlinearities in the hierarchy look like.

\subsection*{Acknowledgements}
This work is part of the author's PhD thesis. He would like to greatly thank his advisor, Axel Grünrock, for suggesting this line of problems and his continued and ongoing support.


\section{Description of the dNLS hierarchy}\label{sec:dnls-derivation}

Keeping in line with the literature we referenced in~\cite{Adams2024} describing the derivation of the NLS hierarchy equations, we stick to~\cites{Alberty1982-i, Sasaki1982-ii} for the dNLS hierarchy equations. For literature dealing more generally with completely integrable systems we recommend the reader consult~\cites{Faddeev1987, Palais1997} and references therein.

In the forthcoming subsections we describe how dNLS and associated higher-order equations arise as a compatibility condition for a linear scattering problem and how these equations are amenable to being recast in a more well-behaved class using the gauge-transformation~\eqref{eq:gauge-transformation}. We will also touch on why this transformation leaves the well-posedness question (mostly) intact, specifically we are referring to the regularity of the gauge-transformation itself.

\subsection{Deriving dNLS hierarchy equations}

The general setting we start out in is a linear scattering problem~\cite{Alberty1982-i}*{eq.~(1.1)} of the form
\begin{equation}\label{eq:general-linear-scattering}
    \d{v} = \Omega v
\end{equation}
involving an $N \times N$ matrix of differential one-forms $\Omega$ depending on a spectral parameter $\zeta \in \C$. Its zero-curvature (also called integrability) condition~\cite{Alberty1982-i}*{eq.~(1.2)}\cite{Sasaki1982-ii}*{eq.~(2.3)} reads
\begin{equation}\label{eq:zcc}
    0 = \d{\Omega} - \Omega \wedge \Omega
\end{equation}
and, for appropriate choice of $\Omega$, leads to various well-known nonlinear evolution equations. Choosing the right Ansatz for $\Omega$ decides which particular set of equations one manages to derive. In~\cite{Adams2024} and~\cite{Alberty1982-i} the Ansatz $\Omega = (\zeta R_0 + P)\d{x} + Q(\zeta)\d{t}$, where the $\d{x}$ part of $\Omega$ depends only linearly on the spectral parameter $\zeta \in \C$, was chosen. One picks the involved matrices as
\begin{equation}
    R_0 = \begin{pmatrix}
    -i & 0 \\ 0 & i
    \end{pmatrix}
    \quad\text{and}\quad
    P = \begin{pmatrix}
    0 & q \\ r & 0
    \end{pmatrix},
\end{equation}
where we leave $Q$ open for the time being. The entries $q$ and $r$ (which are functions depending on $x$ and $t$) are referred to as potentials along which the scattering in~\eqref{eq:general-linear-scattering} happens.

This Ansatz leads to (for example) the NLS and (m)KdV hierarchies of equations\footnote{The astute reader will note, that both the NLS and mKdV equations are embedded within the Ablowitz-Kaup-Newell-Segur (AKNS) hierarchy, a name more commonly used in the inverse scattering community literature, see for example~\cites{AKNS1974,Faddeev1987}.}, depending again on the particular choice of relation between the two potentials $q$ and $r$ and matrix $Q$. In order to derive the dNLS hierarchy equations we follow~\cite{Sasaki1982-ii}*{eq.~(2.4)} and now instead choose $\Omega = (\zeta^2 R_0 + \zeta P)\d{x} + Q(\zeta)\d{t}$ with the same matrices $R_0$ and $P$ as previously, again leaving $Q$ unspecified for now.

A prolonged calculation that we will not reproduce for brevity's sake then shows that the compatibility condition~\eqref{eq:zcc} has an equivalent formulation as a Hamiltonian equation for our two potentials $q$ and $r$
\begin{equation}\label{eq:general-hamiltonian}
    \frac{\d}{\d{t}}u = J \frac{\delta}{\delta u} \mathcal{H},
\end{equation}
see~\cite{Sasaki1982-ii}*{eq.~(4.11)}. In this equation $u = \left(\begin{smallmatrix}r\\q\end{smallmatrix}\right)$ is a vector containing our potentials and $J = -2i \left(\begin{smallmatrix}0 & 1 \\1 & 0\end{smallmatrix}\right)\partial_x$ is an operator (different from the one involved in the derivation of the NLS hierarchy, cf.~\cite{Adams2024}*{eq.~(2.3)}). What is left is to define the Hamiltonian $\mathcal{H}$ that is namesake to~\eqref{eq:general-hamiltonian}.

The Hamiltonian $\mathcal{H}$ has a strikingly similar form as for the NLS hierarchy equations
\begin{equation}\label{eq:hamiltonian-dfn}
    \mathcal{H} = \sum_{n=0}^\infty \alpha_n(t) I_n,
\end{equation}
see~\cite{Adams2024}*{eq.~(2.4)} for comparison. The $\alpha_n(t)$ are derived from the choice of $Q$ we left open previously, and the $I_n$ are conserved quantities of the equations in the dNLS hierarchy, in particular dNLS itself. Appropriate choices of the $\alpha_n(t)$ will thus yield the dNLS hierarchy equations, for which (individually) the $I_n$ are the Hamiltonians.

Last thing is to state the individual Hamiltonian $I_n$: In~\cite{Sasaki1982-ii}*{eqns.~(3.3) and~(3.4)} we are given explicit expressions for deriving these conserved quantities/Ham\-il\-to\-ni\-ans recursively
\begin{equation}\label{eq:hamiltonian-recusion}
    I_n = \int_\R q Y_n \d{x} \;\;\;\text{and}\;\;\; Y_{n+1} = \1{2i}\left[ \partial_x Y_n + q \sum_{k=0}^{n} Y_{n-k}Y_k \right] \;\;\; \text{with} \;\;\; Y_0 = - \frac{r}{2i}.
\end{equation}
The resemblance between~\eqref{eq:hamiltonian-recusion} and~\cite{Adams2024}*{eq.~(2.5)} is undeniable, though the discerning reader will note that the initial condition for this recursion is different, as well as the sum going up to $k = n$ (rather than $k = n-1$).

For later reference we would like to give a lemma describing elementary properties of the $Y_n$ all of which may be verified by a simple inductive argument, so we omit the proof.
\begin{lemma}\label{lem:Yn-properties}
    For $n\in\N$ the terms $Y_n$ have the following properties:
    \begin{enumerate}
    \item $Y_n$ is a sum of monomials in $q$, $r$ and their derivatives.
    \item $Y_n$ as a polynomial is of homogeneous order, where we define the order of a monomial to be sum of twice the total number of derivatives and the number of factors in it. The order of any monomial in $Y_n$ is $2n+1$.
    \item Every monomial in $Y_n$ has a total number of factors $r$, or its derivatives, one greater than the total number of factors $q$, or its derivatives.
    \item The coefficients of the monomials in $Y_n$ are a positive integer multiples of $(-1)^k(2i)^{k-2n-1}$, where $k$ is the total number of derivatives in a given monomial.
    \item $Y_n$ has a single term that consists of just one factor, it is $-(2i)^{-n}\partial_x^{n}r$.
    \end{enumerate}
\end{lemma}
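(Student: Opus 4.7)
The natural approach is strong induction on $n$, proving all five properties simultaneously by reading off how each operation in the recursion
\[
Y_{n+1} = \frac{1}{2i}\Bigl[\partial_x Y_n + q \sum_{k=0}^n Y_{n-k}Y_k\Bigr]
\]
transforms the relevant invariants. The base case is immediate from $Y_0 = -r/(2i)$. For the inductive step, the key observation is that each ingredient of the recursion — (i) $\partial_x$, (ii) the bilinear products $Y_{n-k}Y_k$, (iii) multiplication by $q$, (iv) scaling by $1/(2i)$, (v) summation over $k$ — acts in a controlled way on polynomiality, order, factor-counts, coefficients, and single-factor terms, and the five properties are nothing but the statement that these actions respect the claimed normal form.

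Properties (1), (2), (3), and (5) are essentially bookkeeping and follow quickly. Polynomiality in $q$, $r$ and derivatives is manifestly preserved, giving (1). For (2), order is additive under products and $\partial_x$ raises it by $2$, so $\partial_x Y_n$ has order $2n+3$ while $q Y_{n-k}Y_k$ has order $1+(2(n-k)+1)+(2k+1) = 2n+3$, both matching the target $2(n+1)+1$. For (3), $\partial_x$ preserves factor counts so $\partial_x Y_n$ still has $r$-excess $1$, whereas $Y_{n-k}Y_k$ has $r$-excess $2$ and premultiplication by $q$ brings it back to $1$. For (5), the only single-factor contribution to $Y_{n+1}$ is from $(1/(2i))\partial_x Y_n$ — by (3) every term in $q Y_{n-k}Y_k$ has at least three factors — so differentiating the inductive single-factor term and dividing by $2i$ yields the claimed expression at once.

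The main obstacle is (4): tracking the precise form of the coefficients. Applying $\partial_x$ via Leibniz distributes a derivative across a monomial, producing a sum of monomials with one additional derivative and positive-integer combinatorial multiplicities, raising $k$ to $k+1$ and absorbing a sign into $(-1)^{k}$. Taking products $Y_{n-k}Y_k$ adds the exponents of $(-1)$ and of $2i$, yielding $(-1)^{k_1+k_2}(2i)^{(k_1+k_2)-2n-2}$; premultiplication by $q$ leaves both unchanged; and division by $2i$ shifts the $2i$-exponent by $-1$. Chasing these through the recursion one verifies that every monomial with $k'$ derivatives in $Y_{n+1}$ carries a positive-integer multiple of $(-1)^{k'}(2i)^{k'-2(n+1)-1}$. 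Positivity of the integer multiples is preserved because all combinatorial Leibniz factors are positive and summation over $k$ only combines monomials of identical homogeneous structure, so no sign cancellations can occur.
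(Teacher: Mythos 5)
Your overall strategy --- a simultaneous strong induction reading off how each ingredient of the recursion acts on the five invariants --- is exactly the ``simple inductive argument'' the paper alludes to (the paper omits the proof entirely), and your treatment of (1), (2) and (3) is correct. The gap is in (4), and it is not cosmetic. You claim that applying $\partial_x$ raises $k$ to $k+1$ and ``absorbs a sign into $(-1)^k$'', but differentiation introduces no sign at all: a monomial of $Y_n$ with coefficient $m(-1)^k(2i)^{k-2n-1}$ produces, after Leibniz and division by $2i$, monomials with $k+1$ derivatives and coefficient $m'(-1)^k(2i)^{k-2n-2}$ with $m'>0$, whereas the target normal form for $Y_{n+1}$ at derivative count $k+1$ is $(-1)^{k+1}(2i)^{k-2n-2}$. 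So the derivative branch yields \emph{negative} integer multiples of the claimed form, while (as you correctly compute) the product branch yields positive ones; your closing assertion that no sign cancellations can occur is therefore unjustified as written. Concretely,
\begin{equation*}
Y_2 = -(2i)^{-3}\partial_x^2 r + (2i)^{-4}(\partial_x q)r^2 + 4(2i)^{-4}qr(\partial_x r) - 2(2i)^{-5}q^2r^3,
\end{equation*}
and every one of these coefficients is a \emph{negative} integer multiple of $(-1)^k(2i)^{k-5}$, so property (4) as stated fails for even $n$. The invariant the induction actually propagates is that the coefficient of a monomial with $k$ derivatives is a positive integer multiple of $(-1)^{n+k+1}(2i)^{k-2n-1}$ (equivalently the sign is $(-1)^{(f+1)/2}$ in terms of the number of factors $f=2n+1-2k$); with this correction both branches produce the same sign and your no-cancellation argument becomes valid. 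Since the hierarchy only uses odd $n=2j-1$, where $(-1)^{n+1}=1$, the discrepancy is invisible downstream.

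A second, related problem is that you declare the base case ``immediate from $Y_0=-r/(2i)$'' without checking it, and it does not check out: for (5), $Y_0=-(2i)^{-1}r$ is not $-(2i)^{-0}\partial_x^0 r=-r$, and one step of the recursion gives the single-factor term of $Y_1$ as $-(2i)^{-2}\partial_x r$, not $-(2i)^{-1}\partial_x r$. Your inductive step for (5) is internally consistent --- differentiating and dividing by $2i$ sends $-(2i)^{-a}\partial_x^{n}r$ to $-(2i)^{-a-1}\partial_x^{n+1}r$ --- but anchored at the true $Y_1$ it yields $-(2i)^{-n-1}\partial_x^{n}r$, which is precisely what the corrected version of (4) predicts for $k=n$ and one factor. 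So the honest output of your induction is that (4) and (5) hold only after a sign and a power-of-$2i$ correction; as a proof of the lemma verbatim, the argument has a genuine gap at the Leibniz sign step and at the unverified base case.
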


We are now ready to give the definition, i.e. fix a choice of coefficients $\alpha_n$ in~\eqref{eq:hamiltonian-dfn}, of what is referred to in the title of this paper as the dNLS hierarchy.

\begin{definition}
    For $j \in\N$ we define the $j$th dNLS hierarchy equation to be the Hamiltonian equation for the potential $q(x, t)$ in~\eqref{eq:general-hamiltonian}, where we choose $\alpha_{2j-1} = 2^{2j-1}$ and $\alpha_n = 0$ for  $n \not= 2j-1$ in~\eqref{eq:hamiltonian-dfn}. We identify occurrences of the potential $r(x, t)$ with the complex conjugate of $q(x, t)$, i.e. $r = +\overline{q}$.
\end{definition}

Having defined what we deem to be the dNLS hierarchy equations we may quickly establish an equivalent theorem to~\cite{Adams2024}*{Theorem~2.3} that describes the general form of such an equation. We leave its proof to the reader as it differs only in details from the one in~\cite{Adams2024}.

Note that this is also the point where we switch back to the more common notation of calling the unknown function $u$ (instead of $q$ or $r$). This is not to be confused with the vector of potentials $u = \left(\begin{smallmatrix}r\\q\end{smallmatrix}\right)$ used in~\eqref{eq:general-hamiltonian}.

\begin{theorem}
    For $j \in\N$ there exist coefficients $c_{k, \alpha} \in \Z+i\Z$ for every $\alpha \in \N_0^{2k+1}$ with $|\alpha| = 2j-k-1$, for $1 \le k \le 2j-1$, such that the $j$th dNLS hierarchy equation may be written as
    \begin{equation}\label{eq:dnls-hierarchy}
        i\partial_t u + (-1)^{j+1} \partial_x^{2j} u = \sum_{k=1}^{2j-1} \sum_{\substack{\alpha\in\N_0^{2k+1}\\|\alpha| = 2j-k-1}} c_{k,\alpha} \partial_x \left( \partial_x^{\alpha_1} u \prod_{\ell = 1}^k \partial_x^{\alpha_{2\ell}}\overline{u}\partial_x^{\alpha_{2\ell+1}}u \right).
    \end{equation}
\end{theorem}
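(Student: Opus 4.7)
The plan is to mirror the derivation carried out for the NLS hierarchy in~\cite{Adams2024}, unfolding the Hamiltonian formulation~\eqref{eq:general-hamiltonian} into an explicit evolution equation for $q$ and then reading off the structure of both the leading linear term and the nonlinearity from Lemma~\ref{lem:Yn-properties}. First I would specialise $\mathcal{H} = 2^{2j-1} I_{2j-1}$ in~\eqref{eq:general-hamiltonian}. With $J = -2i\left(\begin{smallmatrix}0&1\\1&0\end{smallmatrix}\right)\partial_x$ the second component reads
\[
\partial_t q \;=\; -2i\cdot 2^{2j-1}\,\partial_x \frac{\delta I_{2j-1}}{\delta r},
\]
and since $I_{2j-1} = \int q\, Y_{2j-1}\,\d{x}$, the variational derivative $\delta_r$ can be taken monomial by monomial in $Y_{2j-1}$ via the Euler--Lagrange formula $\delta_r F = \sum_\ell (-1)^\ell \partial_x^\ell\!\bigl(\partial F/\partial(\partial_x^\ell r)\bigr)$.

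Next I would isolate the linear part. By Lemma~\ref{lem:Yn-properties}(5) the unique single-factor monomial of $Y_{2j-1}$ is $-(2i)^{-(2j-1)}\partial_x^{2j-1} r$, contributing to $qY_{2j-1}$ the quadratic term $-(2i)^{-(2j-1)}\,q\,\partial_x^{2j-1} r$. Taking $\delta_r$ of this term, integrating by parts to move all derivatives onto $q$, and applying the outer $-2i\cdot 2^{2j-1}\,\partial_x$ collapses the powers of $2$ and $i$ to precisely $(-1)^{j+1}\partial_x^{2j} q$; this term I would transpose to the left-hand side of the equation to recover the displayed linear part. By Lemma~\ref{lem:Yn-properties}(2), every other monomial of $Y_{2j-1}$ has at least three factors, so it can contribute only to the nonlinearity.

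For the nonlinear terms I would fix a generic monomial $M$ of $Y_{2j-1}$ with $2k+1$ factors, where $1 \le k \le 2j-1$. By Lemma~\ref{lem:Yn-properties}(2) it carries exactly $2j-k-1$ derivatives in total, and by Lemma~\ref{lem:Yn-properties}(3) it has $k$ factors of $q$-type and $k+1$ of $r$-type. Multiplying by the outer $q$ and applying $\delta_r$ yields a finite sum of monomials with $2k+1$ factors in total ($k+1$ of $q$-type and $k$ of $r$-type), still carrying $2j-k-1$ internal derivatives; the redistribution of those derivatives among the factors is precisely what the Leibniz expansion in the Euler--Lagrange formula produces. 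Applying the outer $-2i\partial_x$ places one further derivative in front, and substituting $r = \overline{u}$, $q = u$ gives exactly the product form $\partial_x\!\bigl(\partial_x^{\alpha_1} u\,\prod_{\ell=1}^k \partial_x^{\alpha_{2\ell}}\overline{u}\,\partial_x^{\alpha_{2\ell+1}} u\bigr)$ with $\alpha \in \N_0^{2k+1}$ and $|\alpha| = 2j-k-1$, as claimed.

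The main obstacle, and the only genuinely delicate point, is verifying that the resulting constants $c_{k,\alpha}$ land in $\Z + i\Z$: by Lemma~\ref{lem:Yn-properties}(4) every monomial of $Y_{2j-1}$ carries a coefficient that is a positive integer multiple of $(-1)^m (2i)^{m-(4j-1)}$, where $m$ is its derivative count, so after multiplication by $-2i\cdot 2^{2j-1}$ and collection of the signs from each integration by parts, the remaining powers of $2$ and of $i$ must cancel to leave a Gaussian integer. This is essentially the same bookkeeping performed in~\cite{Adams2024} and the reason the author defers the proof: the combinatorics are unchanged and only the explicit constants in the recursion~\eqref{eq:hamiltonian-recusion} differ.
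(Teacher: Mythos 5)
Your overall strategy --- unfolding the Hamiltonian formulation~\eqref{eq:general-hamiltonian} with $\mathcal{H}=2^{2j-1}I_{2j-1}$ and reading off the linear part and the shape of the nonlinearity from Lemma~\ref{lem:Yn-properties} --- is exactly the argument the paper defers to the reader, and the structural half of your write-up (the count of factors, conjugates and derivatives surviving the functional derivative, and the outer $\partial_x$) is correct. Two of your quantitative claims, however, are asserted rather than checked, and one of them fails. The smaller point first: the ``precise collapse'' of the dispersive coefficient to $(-1)^{j+1}$ requires the single-factor monomial of $Y_n$ to be $-(2i)^{-n-1}\partial_x^{n}r$ (which is what the recursion~\eqref{eq:hamiltonian-recusion} actually produces from $Y_0=-(2i)^{-1}r$, and what item~(4) of Lemma~\ref{lem:Yn-properties} predicts); with the coefficient $-(2i)^{-n}$ that you quote from item~(5), the linear term comes out as $2i(-1)^{j+1}\partial_x^{2j}u$. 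You should carry out this two-line computation rather than assert it, since it is precisely where the normalisation $\alpha_{2j-1}=2^{2j-1}$ is calibrated.

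The genuine gap is your final paragraph. By Lemma~\ref{lem:Yn-properties}(2) and (4), a monomial of $Y_{2j-1}$ with $2k+1$ factors carries $2j-k-1$ derivatives and a coefficient that is an integer multiple of $(2i)^{-2j-k}$; the Euler--Lagrange/Leibniz expansion only multiplies this by integers and signs, and the prefactor $2\alpha_{2j-1}=2^{2j}$ then leaves an integer multiple of $2^{2j}(2i)^{-2j-k}=2^{-k}i^{-2j-k}$. So the powers of $2$ do \emph{not} cancel on their own: one would additionally need the accumulated integer to be divisible by $2^k$. For $k=1$ this does happen (it is visible in the closed form of Proposition~\ref{prop:dnls-coeff}), but for $k\ge 2$ it does not in general: the fourth-order equation listed in Appendix~\ref{sec:appendix} (with $\alpha_3=2^3$) contains the quintic and septic terms $-\tfrac{15}{2}\,\partial_x(q^2q_xr^2)$ and $\tfrac{5i}{2}\,\partial_x(q^4r^3)$, whose coefficients lie in $\tfrac{1}{2}(\Z+i\Z)$ but not in $\Z+i\Z$; and since the total coefficient of a fixed monomial is the sum of the $c_{k,\alpha}$ over the finitely many tuples $\alpha$ representing it, no redistribution among those tuples can make all $c_{k,\alpha}$ Gaussian integers. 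The cancellation you invoke ``by the same bookkeeping as in~\cite{Adams2024}'' therefore does not occur. To close the argument you must either supply a genuine divisibility statement for the integer prefactors in Lemma~\ref{lem:Yn-properties}(4) --- which the appendix indicates is false for $k\ge2$ --- or prove the theorem with the coefficient ring relaxed to, say, $2^{-k}(\Z+i\Z)$ or $\Q+i\Q$, which is all that is ever used in the well-posedness analysis downstream.
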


\begin{remark}
    We give some points of interest and remarks:
    \begin{enumerate}[wide]
        \item Breaking the definitions down in order to better uncover the structure of the dNLS hierarchy equations, we note that for $n = 2j-1$ the $j$th dNLS equation is given by
        \begin{equation}\label{eq:abstract-dnls}
            i\partial_t u = 2\alpha_{n} \partial_x \frac{\delta}{\delta \overline{u}} \int_\R u Y_{n} \d{x}.
        \end{equation}
        \item The main difference between the equations of the NLS and dNLS hierarchies is that the latter has an additional derivative on each nonlinear term. This is what makes its analysis more difficult, as the nonlinear term $|u|^2\partial_x u$ and its higher-order variants (where none of the derivatives fall on the complex conjugated factor $\overline{u}$) are quite ill-behaved. This is the reason we will be using the gauge-transformation, on which we will give more details in the next subsection.
        \item The first dNLS hierarchy equation ($j=1$) corresponds to the classical dNLS equation~\eqref{eq:dnls}. The higher-order equations, beyond the dNLS and fourth-order ($j=2$) equation, do not, to the author's best knowledge, appear in the literature. We list the first few equations of the hierarchy in Appendix~\ref{sec:appendix}. A further (interleaving) sequence of higher-order PDEs (with odd order of dispersion) can be defined and corresponds to non-zero choices of $\alpha_n$, for $n \not= 2j-1, j\in\N$. We list these in the same appendix. 
        \item Choosing the opposing sign convention $r = -\overline{q}$ also leads to a hierarchy of dNLS-like equations. As, in contrast to NLS, there is no meaningful difference between a focusing or defocusing case depending on the sign in front of the nonlinearity, our sign choice is of no significant importance. We fix it merely to have a designated convention for the name and choose to stay in line with the dNLS equation already present in the literature.
        \item Figuring out a non-recursive description of the coefficients involved in the dNLS hierarchy (or even determining, beyond~\eqref{eq:dnls-hierarchy}, which nonlinear terms appear at all) is, to the author's best knowledge and in general, an unsolved problem. Such further insight into the nonlinearities may in the future aid phrasing well-posedness results dependent on a non-resonance condition (only fulfilled by the actual hierarchy equations).

        In the following subsection, where we explore the action of the gauge-trans\-for\-ma\-tion on the dNLS hierarchy equations, we will at least be able to obtain the coefficients of `bad' cubic nonlinear terms, where no derivatives fall on the complex conjugated factor $\overline{u}$. These `bad' cubic terms are the higher-order generalisations of $|u|^2\partial_x u$ from the nonlinearity of dNLS.
        \item Choosing non-zero values for the even numbered coefficients $\alpha_{2j}$ (and zero for all others) leads to a set of equations that have the same linear parts as the equations in the mKdV hierarchy (see Appendix~\ref{sec:appendix}). It seems these do not appear independently in the literature, but would surely also make for an interesting object of study. Though we do not pursue this in this work.
    \end{enumerate}
\end{remark}

\begin{remark}\label{rem:criticality}
    Now is the right place to establish the critical regularity $s_c(j, r)$ of the dNLS hierarchy equations in Sobolev and more generally Fourier-Lebesgue spaces\footnote{Modulation spaces are not well-behaved under transformations of scale, due to the uniform frequency decomposition involved, thus there is no proper notion of criticality.}.

    In a similar fashion to dNLS itself, the higher-order dNLS hierarchy equations are also invariant under the transformation of scale $u_\lambda(x, t) = \lambda^\1{2}u(\lambda x, \lambda^{2j}t)$, meaning if $u$ is a solution of a dNLS-like equation with initial data $u_0$, then so is $u_\lambda$ with initial data $u_{0,\lambda}(x) = \lambda^\1{2}u_0(\lambda x)$.

    This leads to the critical regularity being $s_c(j, r) = \1{r} - \1{2}$, i.e. the $L^2$-norm stays invariant under this transformation on the scale of Sobolev spaces and $\hat H^\1{2}_1$ on the scale of Fourier-Lebesgue spaces for $r \to 1$.
    Our determined goal is to establish well-posedness of the dNLS hierarchy equations in spaces that are very close to these critical spaces.
\end{remark}

\subsection{The gauge-transformation}\label{sec:gauge-transformation}

As is mentioned above there are certain nonlinear terms that appear in the dNLS hierarchy equations that are gravely less well-behaved than their fellows. These are terms like $|u|^2\partial_x u$ from~\eqref{eq:dnls}, where all derivatives that lie on a cubic nonlinear term fall onto one of the factors that is not the complex conjugate of the unknown solution $u$. As the reader may verify in Appendix~\ref{sec:appendix} these types of nonlinear terms do in fact crop up in the higher-order equations too.

Before we move on to proving well-posedness results for the dNLS hierarchy equations we must first absolve ourselves of these ill-behaved nonlinear terms. To do this we will be making use of the gauge-transformation that is already a well-known tool in the context of the dNLS equation itself:
\begin{equation}\label{eq:chapter-gauge-transformation}
    \mathcal{G}_\pm : u(x, t) \mapsto v(x, t) \coloneqq \exp\left(\pm i \int_{-\infty}^{x} |u(y, t)|^2 \d{y}\right) u(x, t).
\end{equation}
See~\cites{AG2005,HayashiOzawa1992,Guo2021,Takaoka1999}, for example.

For the dNLS equation the gauge-transformation~\eqref{eq:chapter-gauge-transformation} is useful in the following sense: given a function $u$, it solves the dNLS equation~\eqref{eq:dnls} if and only if $v(x, t) := \mathcal{G}_-(u)(x, t)$ solves the gauge-transformed dNLS equation~\eqref{eq:gt-dnls}. Vice versa when you apply the gauge-transformation's inverse $\mathcal{G}_+$.

We want to explore how the gauge-transformation can help us in a similar way in order to simplify, or even enable, the well-posedness analysis of higher-order dNLS hierarchy equations. For this we must first find the right notion of `simpler' equation, which is specific enough in order for us to be able to achieve well-posedness results for and also general enough so that it is a superset of the image of the dNLS hierarchy equations under the gauge-transformation. We find the following definition appropriate.
\begin{definition}
    For $j\in\N$ we call a PDE a ($j$th order) gauged dNLS equation, if there exist coefficients $c_{k,\alpha} \in \C$, for $1 \le k \le 2j$, and $\alpha \in \N_0^{2k+1}$ with $|\alpha| = 2j - k$, such that $c_{1,\alpha} = 0$ if $\alpha_2 = 0$ and the PDE may be written as
    \begin{equation}\label{eq:gauged-dnls-eqn}
        i\partial_t v + (-1)^{j+1}\partial_x^{2j} v = \sum_{k=1}^{2j}\sum_{\substack{\alpha\in\N_0^{2k+1}\\|\alpha|=2j-k}} c_{k,\alpha} \partial_x^{\alpha_1}u \prod_{i=1}^{k} \partial_x^{\alpha_{2i}}\overline{u} \partial_x^{\alpha_{2i+1}}u.
    \end{equation}
\end{definition}
The difference between dNLS hierarchy equations and gauged dNLS equations, in their general form, is evidently rather small. The linear parts of the equations coincide for one. Regarding the cubic nonlinear terms, the gauged dNLS equations cannot contain so called `bad' cubic terms that have none of their derivatives fall on the factor $\overline{u}$ in the cubic. This is exactly the advantage the gauge-transformation delivers. With regard to the higher-order nonlinear terms, the small price we have to pay for the elimination of the `bad' cubic terms is that we incur an additional term of the form $|u|^{2j}u$, without any derivatives lying on it.

\begin{remark}
    We point out that transitioning from dNLS hierarchy equations to gauged dNLS equations does not change the notion of criticality, that was investigated in Remark~\ref{rem:criticality}. This is because we are, at most, leaving a cubic nonlinear term away and are gaining a term of the form $|u|^{2j}u$, that is invariant with respect to the same transformation of scale.
\end{remark}

Our goal for the rest of this subsection will be to establish, that the gauge-transformation does indeed translate between the dNLS hierarchy equations and what we are now referring to as gauged dNLS equations. This will then later allow us, conditioned on the continuity of the gauge-transformation, to prove well-posedness solely for gauged dNLS equations and pull-back these results to the actual equations of interest: the dNLS hierarchy equations. In this spirit we will be proving the following proposition.
\begin{proposition}\label{prop:gauge-transformation}
    Let $j\ge2$, $u(x, t)$ be a function and $v(x, t) := \mathcal{G}_-(u)(x,t)$ its gauge-transform. Then $u$ solves the $j$th order dNLS hierarchy equation if and only if $v$ solves a (corresponding) gauged dNLS equation. And vice versa for the inverse transformation $\mathcal{G}_+$.
\end{proposition}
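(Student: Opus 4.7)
The plan is a direct computation of the equation satisfied by $v = \mathcal{G}_{-}(u)$. Write $\phi(x,t) \coloneqq -i\int_{-\infty}^{x}|u(y,t)|^2\d{y}$, so $v = e^{\phi}u$; note $\phi$ is purely imaginary, $|v| = |u|$, and $\partial_x\phi = -i|u|^2$. The task is to compute $i\partial_t v + (-1)^{j+1}\partial_x^{2j}v$ under the assumption that $u$ solves~\eqref{eq:dnls-hierarchy}, and then to verify that the result has the form of the right-hand side of~\eqref{eq:gauged-dnls-eqn}.

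For the spatial part, Leibniz combined with Faà di Bruno yields
\begin{equation*}
\partial_x^{2j}(e^\phi u) = e^\phi \sum_{\ell=0}^{2j}\binom{2j}{\ell} B_\ell(\partial_x\phi,\partial_x^2\phi,\ldots)\,\partial_x^{2j-\ell}u,
\end{equation*}
where $B_\ell$ is the $\ell$th complete Bell polynomial; substituting $\partial_x\phi = -i|u|^2$ turns each $B_\ell$ into a polynomial in $u, \overline{u}$ and their derivatives. The decisive $\ell=1$ term equals $2j(-i|u|^2)\partial_x^{2j-1}u$, a bad cubic of the same shape as the leading bad cubic in~\eqref{eq:dnls-hierarchy}. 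For the time derivative I would exploit that $\int_{\R}|u|^2\d{x}$ is conserved under every dNLS hierarchy equation (it is $I_0$ up to a constant in~\eqref{eq:hamiltonian-recusion}, and is preserved by any flow generated by the skew-symmetric $J$); hence there is a polynomial expression $\mathcal{F}_j(u,\overline u,\partial_x u,\ldots)$ with $\partial_t|u|^2 = \partial_x \mathcal{F}_j$, yielding $\partial_t\phi = -i\mathcal{F}_j$ under the usual decay at $-\infty$.

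Combining both identities and eliminating $\partial_t u$ and $\partial_x^{2j}u$ via~\eqref{eq:dnls-hierarchy} produces
\begin{equation*}
i\partial_t v + (-1)^{j+1}\partial_x^{2j}v \;=\; e^{\phi}\,\mathcal{N}_j(u,\overline u,\partial_x u,\partial_x\overline u,\ldots),
\end{equation*}
with $\mathcal{N}_j$ a polynomial in $u,\overline u$ and derivatives. Substituting back $u = e^{-\phi}v$ and $\overline u = e^{\phi}\overline v$, each monomial acquires a factor $e^{(\#\overline u - \#u)\phi}$; by the parity in property~(3) of Lemma~\ref{lem:Yn-properties} (and the structure of $\partial_x\,\delta/\delta\overline u$ acting on $I_{2j-1}$) this exponent vanishes on every surviving monomial, so what remains is a polynomial in $v,\overline v$ and their derivatives of the required form~\eqref{eq:gauged-dnls-eqn}, including the $|v|^{2j}v$-type correction produced by iterating $\partial_x\phi = -i|v|^2$ within the higher-$\ell$ Bell contributions. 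The inverse direction, starting from a gauged dNLS solution and applying $\mathcal{G}_+$, is symmetric because $\mathcal{G}_\pm$ are mutual inverses on moduli-preserving maps.

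The main obstacle is verifying that every bad cubic monomial $(\partial_x^{a}u)\,\overline u\,(\partial_x^{b}u)$ with $a+b = 2j-1$ in $\mathcal{N}_j$ has coefficient zero. Such monomials appear both from the $k=1$ summand in~\eqref{eq:dnls-hierarchy} (after distributing the outer $\partial_x$ via Leibniz) and from the $\ell\ge 1$ contributions in the Bell expansion above; matching these two families requires extracting the cubic coefficients of~\eqref{eq:dnls-hierarchy} from the recursion~\eqref{eq:hamiltonian-recusion} using Lemma~\ref{lem:Yn-properties}, in particular parts~(4) and~(5). I would organise the bookkeeping as an induction on $j$, with the classical $j=1$ case (where the cancellation is explicit and well known from the literature on dNLS) serving as the base.
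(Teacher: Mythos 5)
Your overall strategy---compute the equation satisfied by $v = e^{\phi}u$ directly, observe that the exponential factors cancel by the phase-invariance of the nonlinearities, and reduce everything to showing that the `bad' cubic monomials $(\partial_x^{a}u)\,\overline{u}\,(\partial_x^{b}u)$ with $a+b=2j-1$ have vanishing net coefficient---is the same strategy the paper follows. The problem is that you stop exactly where the actual work begins. The entire content of the proposition is the coefficient cancellation that you defer to ``the main obstacle,'' and your proposed resolution (induction on $j$ with the classical $j=1$ case as base) is not a proof and is not obviously workable: the dNLS hierarchy is defined through the recursion~\eqref{eq:hamiltonian-recusion} for the $Y_n$, and there is no direct recursive relation between the nonlinearity of the $j$th hierarchy equation and that of the $(j+1)$th which would let you transport the cancellation from one level to the next. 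You would still need to extract, at each level, the exact coefficients of the bad cubics on both sides.

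The paper does this in two separate closed-form computations. On the hierarchy side (Proposition~\ref{prop:dnls-coeff}) it sets up a two-parameter recursion $K_n(k)$ for the coefficients of the relevant cubic monomials \emph{inside} $Y_n$, solves it explicitly as $K_n(k) = (2i)^{-(n+2)}\bigl(2\binom{n}{k}-\delta_{0,k}\bigr)$, and then pushes this through the functional derivative and the outer $\partial_x$ (using Leibniz, a sign-alternating binomial identity, and a fold-over of the symmetric sum) to obtain the coefficient $\frac{4(-1)^{n+1}\alpha_n}{(2i)^{n+2}}\bigl(\binom{n+2}{k+1}-\delta_{0,k}-\delta_{n,k}\bigr)$. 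On the gauge side (Lemma~\ref{lem:gauge-coeff}) it computes the bad cubics produced both by $\partial_x^{2j}(e^{\phi}u)$ and by $\partial_t\phi$ (only the linear dispersion term of $u_t$ contributes a cubic there), arriving at $i(-1)^{j+1}\bigl(\binom{2j+1}{\ell+1}-\delta_{0,\ell}-\delta_{2j-1,\ell}\bigr)$; the two families match precisely for $n=2j-1$ and $\alpha_{2j-1}=2^{2j-1}$. Without supplying these computations (or an equivalent argument), your proposal establishes only that $v$ solves \emph{some} polynomial equation with the right linear part, not that the bad cubics are absent---which is the whole point. Your ancillary observations (the phase factors cancel by the parity in Lemma~\ref{lem:Yn-properties}(3); the time derivative of the phase reduces to a local flux via conservation of mass) are correct and consistent with the paper, but they are the routine part.
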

Even though this proposition does not exactly specify \emph{which} gauged dNLS equation $v$ would solve, this proposition is sufficient for our purposes, since our well-posedness theorems are so general as to cover the whole class of gauged dNLS equations.

Relating to proof strategy, we will be investigating the coefficients of the `bad' cubic nonlinear terms in the dNLS hierarchy equations and show that these coincide with those coefficients of `bad' cubic terms that are lifted when one uses the gauge-transformation.
We point out that this makes the dNLS hierarchy equations natural, beyond being derived from a completely integrable system, in the sense that their coefficients for `bad' cubic terms are the unique\footnote{This uniqueness is only up to scaling of the coefficients. What is actually unique is the relationship (quotient) between the coefficients.} set that are amenable to use of the gauge-transformation.

As was also the case for the NLS hierarchy equations in~\cite{Adams2024}, there is no specific understanding of the coefficients or finer structure of nonlinearities for the higher-order dNLS hierarchy equations present in the literature, to the author's best knowledge. So the following proposition, where the coefficients of at least the `bad' cubic terms are uncovered, is a first.
\begin{proposition}\label{prop:dnls-coeff}
    Let $n \ge 1$. For $0 \le k \le n$ the coefficient of the cubic nonlinear term $(\partial_x^{n-k}u) \overline{u} (\partial_x^k u)$ is equal to
    \begin{equation}\label{eq:dnls-coeff}
        \frac{4(-1)^{n+1}\alpha_n}{(2i)^{n+2}} \paren{\binom{n+2}{k+1} - \delta_{0, k} - \delta_{n, k}},
    \end{equation}
    where $\delta_{a, b}$ is the Kronecker delta.
\end{proposition}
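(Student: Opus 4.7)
The plan is to work entirely in Fourier and to compute the symbol of the cubic piece of $2\alpha_n \partial_x\, \delta I_n / \delta \overline{u}$, then isolate its ``bad'' part (the portion independent of the $\overline{u}$-frequency). First I would extract the cubic piece $Y_n^{(3)}$ of $Y_n$ from the recursion~\eqref{eq:hamiltonian-recusion}. Using that the linear part of $Y_m$ is $-(2i)^{-(m+1)}\partial_x^m r$ (pinned down by induction in the spirit of Lemma~\ref{lem:Yn-properties}), the only cubic contributions to $q\sum_{k=0}^n Y_{n-k} Y_k$ come from products of two linear pieces, which reduces the recursion to
\[
    Y_{n+1}^{(3)} = \tfrac{1}{2i}\partial_x Y_n^{(3)} + \tfrac{q}{(2i)^{n+3}} \sum_{k=0}^n (\partial_x^{n-k} r)(\partial_x^k r).
\]
Writing $Y_n^{(3)} = \frac{i^{n-1}}{(2i)^{n+2}} f_n(\xi_1, \xi_2, \xi_3)$ in the Fourier variables $\xi_1, \xi_2, \xi_3$ of the three factors $q, r, r$, this collapses to the elementary recursion $f_{n+1} = \eta f_n + P_n$ with $\eta = \xi_1 + \xi_2 + \xi_3$ and $P_n(\xi_2, \xi_3) = \sum_{k=0}^n \xi_2^{n-k}\xi_3^k$; starting from $f_0 = 0$ this gives the closed form $f_n = \sum_{k=0}^{n-1}\eta^{n-1-k} P_k(\xi_2, \xi_3)$.

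Next, I would translate $\delta / \delta \overline{u}$ acting on $I_n^{(4)} = \int q Y_n^{(3)} \d{x}$ into Fourier. Using the symmetry of $f_n$ in its two $r$-slots to combine the two integration-by-parts contributions, a short calculation shows that $M_n^{(3)} := \delta I_n^{(4)}/\delta \overline{u}$ has symbol
\[
    \mu_n(\xi_1, \xi_2, \xi_3) = \frac{2i^{n-1}}{(2i)^{n+2}} f_n(\xi_3, -\eta, \xi_2),
\]
where $\xi_1, \xi_2, \xi_3$ now index the frequencies of $u, \overline{u}, u$. Bad cubic terms correspond to the part of $2\alpha_n\partial_x M_n^{(3)}$ independent of $\xi_2$, which I would extract by setting $\xi_2 = 0$. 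In the closed form for $f_n$ the $\xi_3'$-argument becomes $0$, so $P_k(-(\xi_1+\xi_3), 0) = (-(\xi_1+\xi_3))^k$ and after multiplying by $2\alpha_n i(\xi_1+\xi_3)$ the bad symbol becomes
\[
    S_{\mathrm{bad}}(\xi_1, \xi_3) = \frac{4\alpha_n\, i\, (-i)^{n-1}}{(2i)^{n+2}} \sum_{k=1}^n \xi_1^{n-k}(\xi_1+\xi_3)^k.
\]

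The remaining work is combinatorial. The telescoping identity $\sum_{k=0}^n \xi_1^{n-k}(\xi_1+\xi_3)^k = \bigl((\xi_1+\xi_3)^{n+1} - \xi_1^{n+1}\bigr)/\xi_3 = \sum_{j=0}^n \binom{n+1}{j+1} \xi_1^{n-j}\xi_3^j$ expresses the sum in monomials $\xi_1^{n-j}\xi_3^j$. Since the two $u$-factors of a bad cubic are indistinguishable, the relevant contribution is obtained by symmetrising under $\xi_1 \leftrightarrow \xi_3$; adding the original and its reflection and invoking Pascal's identity $\binom{n+1}{j+1}+\binom{n+1}{j}=\binom{n+2}{j+1}$ produces the shifted binomial $\binom{n+2}{k+1}$, while the two Kronecker deltas $\delta_{0,k}$ and $\delta_{n,k}$ account for the $k=0$ term removed from the original sum and its reflection at $k = n$. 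Matching the Fourier monomial $i^n \xi_1^{n-k}\xi_3^k$ to $(\partial_x^{n-k}u)\overline{u}(\partial_x^k u)$ and simplifying $i(-i)^{n-1}/i^n = (-1)^{n+1}$ delivers the claimed coefficient. The main obstacle I anticipate is the bookkeeping: keeping the various powers of $i$, signs, and the order of the frequency arguments in $f_n(\xi_3, -\eta, \xi_2)$ straight, and correctly handling the passage between slot-specific and symmetric representations of the symbol. Once the right normalisation is fixed, the combinatorial content of the identity is painless.
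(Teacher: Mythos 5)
Your proposal is correct and reproduces~\eqref{eq:dnls-coeff}, but it implements the computation differently from the paper. Both arguments share the same skeleton (isolate the cubic part of $Y_n$ via the recursion~\eqref{eq:hamiltonian-recusion}, push it through $\frac{\delta}{\delta\overline{u}}\int uY_n\d{x}$ and the outer $2\alpha_n\partial_x$, then do binomial combinatorics), but the paper works in physical space: it only ever tracks the coefficient function $K_n(k)$ of the ``bad''-relevant cubics in $Y_n$, guesses and verifies $K_n(k)=c^{n+2}(2\binom{n}{k}-\delta_{0,k})$, and then handles the functional derivative by hand -- including the double-counting of the two $\overline{u}$-slots, the extra $\delta_{0,k}$ from $u(\partial_x^{n-1}u)\overline{u}^2$, the classical Leibniz rule, and the alternating identity $\sum_{k=\ell}^{n-1}(-1)^k\binom{n}{k}\binom{k}{\ell}=(-1)^{n+1}\binom{n}{\ell}$. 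Your Fourier-symbol formulation buys a genuinely cleaner route: the recursion $f_{n+1}=\eta f_n+P_n$ has an immediate closed form for the \emph{entire} cubic symbol (not just its bad part), the Leibniz-rule and double-counting bookkeeping is absorbed into the slot substitution $\xi_2\mapsto-\eta$ together with the factor $2$ from symmetry in the two $r$-slots, and the alternating binomial identity is replaced by a transparent geometric/telescoping sum plus Pascal. I checked your normalisations against low-order cases: the prefactor $4\alpha_n i(-i)^{n-1}/(2i)^{n+2}$ combined with the symbol-to-monomial conversion $i^n\xi_1^{n-k}\xi_3^k\leftrightarrow(\partial_x^{n-k}u)\overline{u}(\partial_x^ku)$ gives exactly $\frac{4(-1)^{n+1}\alpha_n}{(2i)^{n+2}}$, and for $n=1$, $\alpha_1=2$ your $S_{\mathrm{bad}}$ yields the coefficient $2i$ of $|u|^2u_x$, as it should. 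One caution: the linear part of $Y_m$ you use, $-(2i)^{-(m+1)}\partial_x^m r$, is the one actually forced by the recursion and by $Y_0=-r/(2i)$; Lemma~\ref{lem:Yn-properties}(5) as printed reads $-(2i)^{-n}\partial_x^n r$, which is off by one power of $2i$, so you should derive the normalisation by induction (as you indicate) rather than quote the lemma verbatim. With that, the two key formulas you would still need to write out in full -- the symbol of $\delta I_n^{(4)}/\delta\overline{u}$ with its factor of $2$ and argument permutation, and the symmetrisation under $\xi_1\leftrightarrow\xi_3$ producing the two Kronecker deltas -- are both stated correctly, so the plan closes.
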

For $n=1$ is an easy and well-known result: the coefficient of $|u|^2u_x$ in the dNLS equation is $2i$. We note that there is some level of redundancy in the statement as the terms $(\partial_x^{n-k}u) \overline{u} (\partial_x^k u)$ and $(\partial_x^{k}u) \overline{u} (\partial_x^{n-k} u)$ are the same by commutativity. This representation also still contains a choice of coefficients $\alpha_n$. For the dNLS hierarchy we have made this choice, which seems canonical in relation to the coefficients appearing in the gauge-transformation, see Lemma~\ref{lem:gauge-coeff}.

\begin{remark}
    Figuring out the coefficients of the cubic nonlinear terms in general or of any of the higher-order terms also seems an interesting problem. Though the author finds that more delicate methods must be required in order to uncover these, as there is less of an obvious pattern compared with the `bad' cubics.
\end{remark}

\begin{proof}[Proof of Proposition~\ref{prop:dnls-coeff}]
We will prove the claim for $n \ge 2$ only, to eliminate some edge-cases. Referring to~\eqref{eq:abstract-dnls}, which we now understand for general $n\in\N$, we must ask ourselves: where do the `bad' cubic terms come from?\footnote{Even though we haven't formally defined what `bad' cubic terms for mKdV-like equations with an extra derivative are (so where $n$ is even), we will deal with them to be analogues of those for the dNLS hierarchy equations. That is where none of the derivatives in a cubic term fall on $\overline{u}$.}

Working our way backwards, such `bad' terms, say $(\partial_x^{n-k}u) \overline{u} (\partial_x^k u)$, for $0 \le k \le n$, originate (before applying the derivative $\partial_x$ present in~\eqref{eq:abstract-dnls}) from cubic terms in $\frac{\delta}{\delta \overline{u}} \int_\R u Y_{n}\d{x}$ that also have no derivatives lying on $\overline{u}$ and a single derivative fewer in total, for example $(\partial_x^{n-1-k}u) \overline{u} (\partial_x^k u)$, for $0 \le k \le n-1$.

Recursing again, past the functional derivative, such cubic terms with $n-1$ total derivatives, but none on $\overline{u}$, can only originate from quartic terms in the integrand of the Hamiltonian, where at least one of the two $\overline{u}$ factors has no derivatives lying upon it. In turn, since we are multiplying with $u$ in the integral, these come from cubic terms in $Y_{n}$ where at least one of the two factors $\overline{u}$ has no derivatives lying upon it. General form of these terms is then $(\partial_x^{n-1-k}u) \overline{u} (\partial_x^k \overline{u})$, for $0 \le k \le n-1$.

To ease notation let $K_n(k)$ refer to the coefficient of $(\partial_x^{n-1-k}u) \overline{u} (\partial_x^k \overline{u})$ in $Y_n$, for $0 \le k \le n-1$. From here on out we will also use the convention $c = \1{2i}$, as this factor will appear often.

Our initial task is now to determine $K_n(k)$, for $n \ge 1$ and $0 \le k \le n-1$.
Looking at the recursive definition of $Y_{n+1}$ in~\eqref{eq:hamiltonian-recusion}
\begin{equation}
    Y_{n+1} = c\left[ \partial_x Y_n + u \sum_{k=0}^{n} Y_{n-k}Y_k \right] \;\;\; \text{with} \;\;\; Y_0 = - c\overline{u},
\end{equation}
we can determine that cubic terms with coefficients $K_{n+1}(k)$ appear in $Y_{n+1}$ in two ways:
\begin{enumerate}
    \item from the first summand in the brackets, if a term in $Y_n$ that also has a factor $\overline{u}$ with no derivatives gets differentiated, by Leibniz' rule,
    \item in the sum, since the whole sum is multiplied with $u$, if for either $k=0$ or $k =n$ a factor $Y_0$ is involved. This is since this is the only $Y_n$ that contains a singular factor $\overline{u}$ and we would like the result to be cubic. We can be more specific even: a term we are looking for only appears by the product of $\overline{u}$ from $Y_0$ and a term $\partial_x^{n-1}\overline{u}$ from $Y_n$, resulting in $u\overline{u}\partial_x^{n-1}\overline{u}$ for both $k = 0$ and $k = n$ in the sum.
\end{enumerate}
Accounting for the coefficients present and any edge-cases, we thus find that our coefficient function $K_n(k)$ fulfils the following recursion relation
\begin{equation*}
    K_{n+1}(k) = c\begin{cases}
    K_n(0) &\text{if $k=0$,}\\
    2K_n(0) + K_n(1) &\text{if $k = 1$,}\\
    K_n(k-1)+K_n(k) &\text{if $1 < k < n$,}\\
    2c^{n+1}+K_n(n-1) &\text{if $k = n$,}
    \end{cases}
\end{equation*}
for $n > 1$ and $0 \le k \le n-1$. One may easily verify, with the initial condition $K_1(0) = c^3$ being evident, this recursion relation is solved by
\begin{equation}
    K_{n}(k) = c^{n+2} \paren{2\binom{n}{k} - \delta_{0, k}},
\end{equation}
at least for $n > 1$. Note that the lack of symmetry here is no coincidence, as the terms whose coefficients are described by $K_n(k)$ shuffle derivatives between $u$ and $\overline{u}$ rather than two identical factors $u$.

Next we must investigate how the functional derivative $\frac{\delta}{\delta \overline{u}} \int_\R u Y_{n}\d{x}$ transforms these coefficients of terms in $Y_n$. For the readers convenience we recall the action of the functional derivative. If
\begin{equation*}
    F[\phi] = \int_\R f(\phi, \partial_x\phi, \partial_x^2 \phi, \ldots, \partial_x^N \phi) \d{x} \quad\text{one has}\quad \frac{\delta F}{\delta\phi} = \sum_{k=0}^N (-1)^{k}\partial_x^k \frac{\partial f}{\partial (\partial_x^k \phi)}.
\end{equation*}
So we must take care to account for the fact that every `bad' quartic term in the Hamiltonian $\int_\R u Y_n \d{x}$ is counted twice: once for the factor $\overline{u}$ without any derivatives lying upon it and possibly another time if the remaining $\overline{u}$ factor (that may carry derivatives). We will use the symbol $\mathcal{R}$ to account for terms that are not `bad' cubics and thus are not of importance for our analysis; it may differ from line to line. For $n > 1$ we figure
\begin{align}
    \frac{\delta}{\delta \overline{u}} \int_\R u Y_{n}\d{x} &= \sum_{k=0}^{n-1} (-1)^k \partial_x^k \frac{\partial (uY_n)}{\partial(\partial_x^k\overline{u})}\\
    &= \sum_{k=0}^{n-1} (-1)^k \partial_x^k (K_n(k) + \delta_{0,k}) |u|^2 (\partial_x^{n-1-k} u) + \mathcal{R}
    \intertext{Here we must be careful to account for the extra $1$ (which we do by introducing $\delta_{0, k}$), which appears when differentiating the term $u(\partial_x^{n-1}u)\overline{u}^2$ in the functional derivative. This nicely cancels with the Kronecker delta in the coefficient function $K_n(k)$. Next we use the classical Leibniz rule and interchange the order of summation:}
    &= \sum_{k=0}^{n-1}\sum_{\ell=0}^{k} (-1)^k 2c^{n+2}\binom{n}{k}\binom{k}{\ell} \overline{u} (\partial_x^{n-1-k+k-\ell}u)(\partial_x^{\ell} u) +\mathcal{R}\\
    &= 2c^{n+2} \sum_{\ell=0}^{n-1}\paren{\sum_{k=\ell}^{n-1}(-1)^k \binom{n}{k}\binom{k}{\ell}} \overline{u} (\partial_x^{n-1-\ell}u)(\partial_x^\ell u)+ \mathcal{R}\\
    &= 2c^{n+2}(-1)^{n+1} \sum_{\ell=0}^{n-1}\binom{n}{\ell} \overline{u} (\partial_x^{n-1-\ell}u)(\partial_x^\ell u) + \mathcal{R},
\end{align}
where in the final step we used a well-known summation identity for binomial coefficients.

This representation of $\frac{\delta}{\delta \overline{u}} \int_\R u Y_{n}\d{x}$ we may now use as the right-hand side in the definition of our evolution equations~\eqref{eq:abstract-dnls} in order to determine the coefficients we are interested in. Again we denote terms that are not of interest to us by use of the symbol $\mathcal{R}$, which may change from line to line:
\begin{align*}
    i\partial_t u &= 2\alpha_n\partial_x \frac{\delta}{\delta \overline{u}} \int_\R u Y_{n}\d{x} = \frac{4(-1)^{n+1}\alpha_n}{(2i)^{n+2}} \partial_x \sum_{\ell=0}^{n-1}\binom{n}{\ell} \overline{u} (\partial_x^{n-1-\ell}u)(\partial_x^\ell u) + \mathcal{R}\\
    &= \frac{4(-1)^{n+1}\alpha_n}{(2i)^{n+2}} \sum_{\ell=0}^{n-1}\binom{n}{\ell} \overline{u}\paren{(\partial_x^{n-\ell}u)(\partial_x^{\ell}u) + (\partial_x^{n-(\ell+1)u})(\partial_x^{\ell+1}u)} +\mathcal{R}\\
    &=\frac{4(-1)^{n+1}\alpha_n}{(2i)^{n+2}} \paren{\sum_{\ell=0}^{n-1} \binom{n}{\ell} \overline{u}(\partial_x^{n-\ell}u)(\partial_x^{\ell}u) + \sum_{\ell=1}^{n} \binom{n}{\ell-1} \overline{u}(\partial_x^{n-\ell}u)(\partial_x^{\ell}u)} +\mathcal{R}.
\end{align*}
The first and last terms of these sums respectively are both of the form $|u|^2\partial_x^n u$ so we may combine them. All other `bad' cubics appear in the sums twice by symmetry so we `fold over' the sum in order to see the actual coefficient. We omit the leading factor for space reasons.
\begin{align*}
    &(n+1)|u|^2(\partial_x^n u) + \sum_{\ell=1}^{n-1} \binom{n+1}{\ell} \overline{u}(\partial_x^{n-\ell}u)(\partial_x^{\ell}u) +\mathcal{R}\\
    &=(n+1)|u|^2(\partial_x^n u) + \sum_{\ell=1}^{\lfloor\frac{n-1}{2}\rfloor} \paren{\binom{n+1}{\ell} + \binom{n+1}{n-\ell}} \overline{u}(\partial_x^{n-\ell}u)(\partial_x^{\ell}u) +\mathcal{R}\\
\end{align*}
Using the identity $\binom{n+1}{\ell} + \binom{n+1}{n-\ell} = \binom{n+2}{\ell+1}$ we have now been able to completely determine the coefficients of the `bad' cubic terms appearing in the hierarchy equations. Noting that $\binom{n+2}{0+1} = \binom{n+2}{n+1} = n+2 = (n+1) - 1$ one may verify that the representation given in~\eqref{eq:dnls-coeff} is correct.
\end{proof}

Our next step in preparation of the proof of Proposition~\ref{prop:gauge-transformation} is figuring out which cubic nonlinear terms can be lifted by the gauge-transformation (and which coefficients lead to total cancellation of these terms). For this we will prove the following lemma in which it is established which `bad' cubic terms are generated by inserting a gauge-transformed function into the linear part of a dNLS hierarchy equation.
\begin{lemma}\label{lem:gauge-coeff}
    Let $j \in \N$ and $u$ be a solution of the $j$th dNLS hierarchy equation. We set $v := \mathcal{G}_-(u)$ to be its gauge-trans\-form. The coefficient of the `bad' cubic term $(\partial_x^{2j-1-\ell}u)\overline{u}(\partial_x^\ell u)$, in terms of $u$, appearing in $i\partial_t v + (-1)^{j+1}\partial_x^{2j} v$ is
    \begin{equation}
        i(-1)^{j+1} \paren{\binom{2j+1}{\ell+1} - \delta_{0, \ell} -\delta_{2j-1, \ell}}.
    \end{equation}
\end{lemma}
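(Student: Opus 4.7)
The plan is to substitute $v = e^{\Phi}u$ with $\Phi = -i\int_{-\infty}^{x}\abs{u}^2\,\d y$ into $i\partial_t v + (-1)^{j+1}\partial_x^{2j}v$, expand via Leibniz together with the chain rule, and isolate those cubic-in-$u$ monomials which carry no derivative on the $\overline u$ factor. Since $e^\Phi = 1 + \Phi + O(u^4)$ and $\Phi$ is itself quadratic in $u$, only two mechanisms feed cubic-order terms: the Leibniz expansion of $(-1)^{j+1}\partial_x^{2j}(\Phi u)$ and the chain-rule term $i(\partial_t\Phi)u$. All residual linear contributions and the nonlocal $\Phi\cdot\partial_x^{2j}u$ pieces cancel once the linear part of the hierarchy equation is used to rewrite $\partial_t u$.

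First I would handle $(-1)^{j+1}\partial_x^{2j}(\Phi u) = (-1)^{j+1}\sum_{k=0}^{2j}\binom{2j}{k}(\partial_x^k\Phi)(\partial_x^{2j-k}u)$. The $k = 0$ term is nonlocal; for $k \ge 1$ one has $\partial_x^k\Phi = -i\partial_x^{k-1}(u\overline u)$, and the unique summand in its Leibniz expansion with $\overline u$ undifferentiated is the endpoint $-i(\partial_x^{k-1}u)\overline u$. Hence this piece contributes to the bad cubics a sum proportional to
\[
    \sum_{k=1}^{2j}\binom{2j}{k}(\partial_x^{k-1}u)\,\overline u\,(\partial_x^{2j-k}u).
\]
For a fixed canonical cubic $(\partial_x^{2j-1-\ell}u)\overline u(\partial_x^{\ell}u)$, the two indices $k = \ell+1$ and $k = 2j-\ell$ both yield this cubic (and are always distinct because $2j-1$ is odd), so Pascal's rule collapses the pair into $\binom{2j}{\ell+1} + \binom{2j}{\ell} = \binom{2j+1}{\ell+1}$.

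Second, I would compute $\Phi_t$ from $\partial_t\abs{u}^2$ at quadratic order. Plugging the linear part $\partial_t u = i(-1)^{j+1}\partial_x^{2j}u + O(u^3)$ into $\partial_t\Phi = -i\int_{-\infty}^{x}\partial_t\abs{u}^2\,\d y$ and integrating by parts $2j$ times (the boundary contributions at $-\infty$ vanishing in the classical sense) yields
\[
    \partial_t\Phi = (-1)^{j+1}\sum_{k=0}^{2j-1}(-1)^k (\partial_x^{2j-1-k}u)(\partial_x^{k}\overline u) + O(u^4).
\]
Among the summands of $i(\partial_t\Phi)u$, only the $k = 0$ one has $\overline u$ undifferentiated; it supplies an extra contribution only to the single canonical endpoint cubic $\ell = 0$ (equivalently $\ell = 2j-1$), which is exactly what the Kronecker corrections $-\delta_{0,\ell} - \delta_{2j-1,\ell}$ in the claimed formula encode.

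Summing the two bad-cubic coefficients per canonical $\ell$ and collecting the factors of $i$ and $(-1)^{j+1}$ then produces $i(-1)^{j+1}\bigl(\binom{2j+1}{\ell+1} - \delta_{0,\ell} - \delta_{2j-1,\ell}\bigr)$. The main obstacle is the combinatorial bookkeeping: keeping the nonlocal $\Phi\cdot\partial_x^{2j}u$ contributions separate so that they truly cancel, correctly pairing the two Leibniz indices $k\leftrightarrow 2j+1-k$ in order to invoke Pascal's rule, and singling out the endpoint cubics where the $\partial_t\Phi$-term, and only there, provides the compensating contribution that accounts for the two Kronecker-delta terms.
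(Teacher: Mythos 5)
Your proposal is correct and follows essentially the same route as the paper's proof: extract the cubic contributions of $(-1)^{j+1}\partial_x^{2j}(e^{\Phi}u)$ by inserting a single factor $\partial_x\Phi=-i|u|^2$ into the Leibniz expansion, pair the two indices $k=\ell+1$ and $k=2j-\ell$ via Pascal's rule to obtain $\binom{2j+1}{\ell+1}$, and recover the Kronecker corrections from the $k=0$ term of the integrated-by-parts $i(\partial_t\Phi)u$ contribution after substituting the linear part for $\partial_t u$. The only remark worth making is that collecting the prefactors you list (a factor $(-1)^{j+1}\cdot(-i)$ on the spatial piece and $i(-1)^{j+1}$ on the temporal endpoint piece) actually yields an overall prefactor $i(-1)^{j}$ rather than the stated $i(-1)^{j+1}$ --- but the paper's own final display carries this same $i(-1)^{j}$ prefactor, so this sign discrepancy with the lemma statement is inherited from the source rather than introduced by your argument.
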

\begin{proof}
    We begin this proof by simple insertion of $v$ into the proposed linear part of a dNLS hierarchy equation and elementary calculation:
    \begin{align}\label{eq:gauge-inserted-eq}
        i\partial_t v + (-1)^{j+1}\partial_x^{2j} v &= G_u(i\partial_t u + (-1)^{j+1}\partial_x^{2j}u \\&+ u\int_{-\infty}^x u_t\overline{u} + u\overline{u_t}\d{\lambda} + i(-1)^j\sum_{k=0}^{2j-1} \partial_x^{2j-1-k}\paren{|u|^2\partial_x^k u} + \mathcal{R}).\notag
    \end{align}
    Here we have re-used the symbol $\mathcal{R}$ to denote higher-order terms and non-`bad' cubics and introduced the notation $G_u = \exp\paren{-i\int_{-\infty}^x |u(y)|^2 \d{y}}$ to simplify matters. Further cubic nonlinear terms may be produced by the integral, but only if the integrand is quadratic in $u$. Inserting the dNLS hierarchy equation that is solved by $u$ for the terms $u_t$ and $\overline{u_t}$ we see that the integrand is only quadratic for the linear dispersion term in the equation:
    \begin{align}
        u\int_{-\infty}^x u_t\overline{u} + u\overline{u_t}\d{\lambda} &= u\int_{-\infty}^x (i(-1)^{j+1}\partial_x^{2j}u)\overline{u} - u(i(-1)^{j+1}\partial_x^{2j}\overline{u}) \d{\lambda} +\mathcal{R}\\
        &= i(-1)^{j+1}u \int_{-\infty}^x (\partial_x^{2j}u)\overline{u} - u(\partial_x^{2j}\overline{u}) \d{\lambda}+\mathcal{R}.
    \end{align}
    The reader may now inductively verify the fact that this integral can be rewritten as
    \begin{equation*}
        i(-1)^{j+1}u \int_{-\infty}^x (\partial_x^{2j}u)\overline{u} - u(\partial_x^{2j}\overline{u}) \d{\lambda} = i(-1)^{j+1} u\sum_{k=0}^{2j-1}(-1)^k (\partial_x^{2j-1-k}u)(\partial_x^k \overline{u}).
    \end{equation*}
    Of the terms in this sum only the first one is a `bad' cubic term, so when we now return to~\eqref{eq:gauge-inserted-eq} the other terms of the sum may be absorbed into $\mathcal{R}$ and we are left with
    \begin{align}
        \eqref{eq:gauge-inserted-eq} = G_u(i\partial_t u &+ (-1)^{j+1}\partial_x^{2j}u + i(-1)^{j+1}|u|^2(\partial_x^{2j-1}u)\\ & + i(-1)^j\sum_{k=0}^{2j-1} \partial_x^{2j-1-k}\paren{|u|^2\partial_x^k u} + \mathcal{R})\notag\\
        = G_u(i\partial_t u &+ (-1)^{j+1}\partial_x^{2j}u + i(-1)^{j+1}|u|^2(\partial_x^{2j-1}u)\\ & + i(-1)^j\sum_{k=0}^{2j-1}\sum_{\ell=0}^{2j-1-k}\binom{2j-1-k}{\ell} \overline{u}(\partial_x^{2j-1-k-\ell} u)(\partial_x^{k+\ell} u) + \mathcal{R}).\notag
    \end{align}
    Now interchanging the sums and using a well-known identity $\sum_{\ell=k}^{2j-1}\binom{2j-1-k}{\ell-k} = \binom{2j}{\ell}$ for binomial coefficients we go on to write
    \begin{align}
        =G_u(i\partial_t u &+ (-1)^{j+1}\partial_x^{2j}u - i(-1)^{j}|u|^2(\partial_x^{2j-1}u)\\ & + i(-1)^j\sum_{\ell=0}^{2j-1} \frac{2j}{2j-\ell}\binom{2j-1}{\ell} \overline{u}(\partial_x^{2j-1-\ell} u)(\partial_x^{\ell} u) + \mathcal{R}).\notag
    \end{align}
    To account for the symmetry of the cubic terms with derivatives on the factors $u$ we again `fold-over' this sum so that we may read off the coefficients more comfortably:
    \begin{align}
        &=G_u(i\partial_t u + (-1)^{j+1}\partial_x^{2j}u - i(-1)^{j}|u|^2(\partial_x^{2j-1}u)\\ & + i(-1)^j\sum_{\ell=0}^{\lfloor \frac{2j-1}{2} \rfloor} \paren{\frac{2j}{2j-\ell}\binom{2j-1}{\ell} + \frac{2j}{\ell+1}\binom{2j-1}{2j-1-\ell}} \overline{u}(\partial_x^{2j-1-\ell} u)(\partial_x^{\ell} u) + \mathcal{R})\notag\\
        &=G_u(i\partial_t u + (-1)^{j+1}\partial_x^{2j}u \\&+ i(-1)^j\sum_{\ell=0}^{\lfloor \frac{2j-1}{2} \rfloor} \paren{\binom{2j+1}{\ell+1} - \delta_{0, \ell}} \overline{u}(\partial_x^{2j-1-\ell} u)(\partial_x^{\ell} u) + \mathcal{R})\notag
    \end{align}
    These coefficients coincide with the statement of this lemma so the proof is complete.
\end{proof}

Now all ingredients we need for the proof of Proposition~\ref{prop:gauge-transformation} are set in place.
\begin{proof}[Proof of Proposition~\ref{prop:gauge-transformation}]
    There isn't much left to argue: When applying the gauge-transformation $v = \mathcal{G}_-(u)$ and inserting $v$ into the linear part of a dNLS hierarchy equation, the way one recovers which equation $v$ solves is by using that $u$ solves a dNLS hierarchy equation and then rewriting all nonlinear terms in $v$ instead of $u$ by supplementing factors with the exponential function involved in the gauge-transformation and/or adding correctional higher-order terms.

    Since we have now found, between Proposition~\ref{prop:dnls-coeff} and Lemma~\ref{lem:gauge-coeff}, that the coefficients of the dNLS hierarchy equations and the gauge-transformation coincide (we remind the reader that for a dNLS hierarchy equation we set $n = 2j-1$ and our choice of $\alpha_{2j-1} = 2^{2j-1}$), we can be sure of the fact that at least before rewriting the nonlinear terms in terms of $v$, all `bad' cubic terms are cancelled by the gauge-transformation. In supplementing cubic terms with the exponential function form the gauge-transformation we do not suddenly turn them `bad' and higher-order terms that may need to be added (in order to account for cases where the derivative in a gauge-transformed nonlinear terms falls onto the exponential function) are of no concern to us.
\end{proof}

\subsection{Continuity of the gauge-transformation}

After having established that the use of the gauge-transformation absolves us of the most ill-behaved terms in dNLS hierarchy equations, we must also argue that it is compatible with our goal of well-posedness. More precisely we must exhibit its continuity, so that the gauge-transformation may be used to pull-back well-posedness results for gauged dNLS equations to well-posedness for dNLS hierarchy equations that we are actually interested in.

For well-posedness in Fourier-Lebesgue spaces continuity of the gauge-trans\-for\-ma\-tion had previously been established in the literature.

\begin{lemma}[\cite{AG2005}*{Lemma~3.3 and Remark~3.4}]\label{lem:gauge-continuity-hat}
    Let $s \ge \1{2}$ and $1 < r \le 2$. Then the gauge-transformation $\mathcal{G}_\pm : \hat H^s_r(\R) \to \hat H^s_r(\R)$ is Lipschitz continuous on bounded sets. The same holds true if the gauge-transformation is viewed as a map $\mathcal{G}_\pm : C(I, \hat H^s_r) \to C(I, \hat H^s_r)$ for an arbitrary interval $I \subset\R$.
\end{lemma}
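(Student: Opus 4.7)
The plan is to follow the approach of \cite{AG2005}*{Lemma~3.3}. Factorising the gauge-transformation as $\mathcal{G}_\pm(u)(x) = u(x) e^{\pm i I(u)(x)}$ with $I(u)(x) = \int_{-\infty}^x |u(y)|^2 \d{y}$, I would decompose the difference as
\begin{equation*}
\mathcal{G}_\pm(u) - \mathcal{G}_\pm(w) = (u - w) e^{\pm i I(u)} + w \paren{e^{\pm i I(u)} - e^{\pm i I(w)}}.
\end{equation*}
This separates the Lipschitz estimate into (i) controlling a product with a unimodular exponential factor and (ii) controlling the difference of two such factors.

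For (i), I would expand $e^{\pm i I(u)} = \sum_{k \ge 0} \frac{(\pm i)^k}{k!} I(u)^k$ and invoke the (near-)algebra property of $\hat H^s_r$ for $s \ge \1{2}$ and $1 < r \le 2$ iteratively to obtain a convergent series bound. The required control on $I(u)$ follows from $\partial_x I(u) = |u|^2 \in \hat H^{s-1}_r$ via a standard product estimate, combined with an $L^\infty$-bound on $I(u)$ (using the embedding $\hat H^s_r \hookrightarrow L^2$, so that $|u|^2 \in L^1$) that handles the otherwise singular zero-frequency contribution of the antiderivative. For (ii), the identity
\begin{equation*}
e^{\pm i I(u)} - e^{\pm i I(w)} = \pm i \paren{I(u) - I(w)} \int_0^1 e^{\pm i (t I(u) + (1-t) I(w))} \d{t}
\end{equation*}
reduces matters to a Lipschitz estimate on $I(u) - I(w)$; since $\partial_x(I(u)-I(w)) = (u-w)\overline{u} + w\overline{(u-w)}$, this in turn follows from the same product estimate in $\hat H^{s-1}_r$ applied to the difference, with the exponential factors in the integrand absorbed by the bound from step~(i).

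The main obstacle is the endpoint regime, especially $s = \1{2}$ and $r$ close to $1$, where the algebra-type product estimates on $\hat H^s_r$ are borderline. The remedy is to exploit the gain of a full derivative when converting $\partial_x I(u)$ back to $I(u)$ via $\partial_x^{-1}$, which exactly compensates for the derivative loss in the bilinear product estimate applied to $|u|^2$; accumulating this carefully through the Taylor series yields a Lipschitz constant depending only on the radius of the bounded set containing $u$ and $w$. Finally, the $C(I, \hat H^s_r)$-version is immediate since the entire argument is pointwise in $t$ and all multiplicative constants are uniform in the time parameter.
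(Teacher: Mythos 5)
First, a point of orientation: the paper does not prove this lemma at all --- it is imported verbatim from \cite{AG2005}*{Lemma~3.3 and Remark~3.4}. The only gauge-continuity argument actually carried out in the paper is for the modulation-space analogue, Lemma~\ref{lem:gauge-continuity-modulation}, and your architecture (factor off the exponential, expand it as a power series, control the antiderivative $I(u)$ separately at low frequencies via an $L^\infty$ bound and at high frequencies via the gain of a full derivative over $|u|^2$, then handle differences by a telescoping/averaging identity) is exactly the architecture of that proof and, as far as I can tell, of Grünrock's original one. So the plan is sound in outline.

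The genuine gap is the central multiplication step. You propose to ``invoke the (near-)algebra property of $\hat H^s_r$ for $s \ge \1{2}$ and $1 < r \le 2$ iteratively,'' but in precisely the regime the lemma is about this property fails: $\JBX[\xi]^{-1/2} \notin L^r$ for $r \le 2$, so $\hat H^{1/2}_r(\R) \not\hookrightarrow L^\infty(\R)$ and $\hat H^{1/2}_r$ is not an algebra. Worse, the estimate you actually need, $\norm{fg}_{\hat H^s_r} \lesssim \norm{f}_{Y}\norm{g}_{\hat H^s_r}$ with $g$ \emph{only} in $\hat H^s_r$, fails even for $Y = \hat H^{1/2}_r \cap L^\infty$: in the interaction where the output frequency is comparable to that of $f$, the weight lands entirely on $f$ and one is forced to place $g$ in $L^\infty$. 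The exponential factor must therefore be placed in a genuine multiplier algebra for $\hat H^s_r$ --- a weighted Wiener-type space $\set{f : \JBX[\xi]^s\hat f \in L^1}$, the role played by $M_{\infty,1}$ and $M^s_{\infty,\tilde p}$ in the proof of Lemma~\ref{lem:gauge-continuity-modulation} --- and proving that $I(u)$, restricted to high frequencies, lands in such a space with norm $\lesssim \norm{u}_{\hat H^{1/2}_r}^2$ is where all the work of \cite{AG2005}*{Lemma~3.3} sits. Your sentence that ``the gain of a full derivative\dots exactly compensates for the derivative loss'' names the mechanism but does not carry it out, and the naive implementation (Cauchy--Schwarz on $\JBX[\xi]^{-1/2}(\hat u * \check{\bar u})$ followed by integration in $\xi$) misses by a logarithm at $s=\1{2}$, $r=2$; closing it requires the more careful weight distribution of the bilinear estimates in \cite{AG2005}. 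A secondary, fixable issue: $I(u)(x) \to \norm{u}_{L^2}^2 \neq 0$ as $x \to +\infty$, so $I(u) \notin \hat H^s_r$ and the powers $I(u)^k$ cannot be measured in the data space at all --- the low/high frequency splitting has to be built into the definition of the multiplier norm before the series is summed, as is done explicitly in the proof of Lemma~\ref{lem:gauge-continuity-modulation}; your $L^\infty$ remark gestures at this but the series bound as written presumes $I(u)$ itself lives in $\hat H^s_r$.
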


Though even with well-posedness results for dNLS in modulation spaces already appearing in the literature, see~\cite{Guo2021}, where the gauge-transformation aided in simplifying the equation, the issue of its continuity does not seem to have been tackled. Thus we prove the following Lemma.

\begin{lemma}\label{lem:gauge-continuity-modulation}
    Let $2 \le p < \infty$ and $s > \1{2} - \frac{1}{p}$. Then the gauge-transformation $\mathcal{G}_\pm : M^s_{2, p}(\R) \to M^s_{2, p}(\R)$ is Lipschitz continuous on bounded sets. Moreover it is also continuous interpreted as a map $\mathcal{G}_\pm : C(I, M^s_{2, p}) \to C(I, M^s_{2, p})$ for an arbitrary interval $I \subset\R$.
\end{lemma}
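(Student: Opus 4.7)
The plan is to mimic the proof of Lemma~\ref{lem:gauge-continuity-hat} at the level of modulation spaces, substituting the Sobolev embedding~\eqref{eq:modulation-sobolev} and the Leibniz rule~\eqref{eq:modulation-leibniz} for their Fourier--Lebesgue counterparts. Writing $F_w(x) := \int_{-\infty}^x |w(y)|^2\,\d y$ (so that $|e^{\pm iF_w}|\equiv 1$), the Lipschitz estimate for $u, v$ in a ball of radius $R$ in $M^s_{2,p}$ will follow from the telescoping decomposition
\[
    \mathcal{G}_\pm(u) - \mathcal{G}_\pm(v) = e^{\pm iF_v}(u - v) + (e^{\pm iF_u} - e^{\pm iF_v})u,
\]
via two ingredients: (A) multiplication by $e^{\pm i F_w}$ is bounded on $M^s_{2,p}$ with operator norm polynomial in $\|w\|_{M^s_{2,p}}$; and (B) the difference $e^{\pm iF_u} - e^{\pm iF_v}$ acts as a multiplier with operator norm $\lesssim_R \|u - v\|_{M^s_{2,p}}$.

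For (A) I would apply~\eqref{eq:modulation-leibniz} in the configuration $(p_1, q_1, p_2, q_2) = (\infty, 1, 2, p)$ (one checks $\1{2} = \1{p_1} + \1{p_2}$ and $\1{p'} = \1{q_1'} + \1{q_2'}$) to obtain
\[
    \bignorm{e^{\pm iF_w}h}_{M^s_{2,p}} \lesssim \bignorm{e^{\pm iF_w}}_{M^s_{\infty,1}}\norm{h}_{M_{2,p}} + \bignorm{e^{\pm iF_w}}_{M_{\infty,1}}\norm{h}_{M^s_{2,p}},
\]
and then bound both norms of the exponential via the Taylor expansion $e^{\pm iF_w} = 1 + \sum_{k \ge 1}(\pm iF_w)^k/k!$, iteratively applying~\eqref{eq:modulation-leibniz} and using $\partial_x F_w = |w|^2$ to rewrite powers of the antiderivative $F_w$ as $\partial_x^{-1}$ applied to products of $|w|^2$-factors. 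Here the hypothesis $s > \1{2} - \1{p}$ enters through~\eqref{eq:modulation-sobolev}: it is exactly the Sobolev gain needed to lift $M^s_{2,p}$ into a multiplier-friendly modulation space once one derivative has been spent on the antiderivative, making the resulting Taylor series converge to a polynomial in $\|w\|_{M^s_{2,p}}$.

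For (B) the mean-value representation
\[
    e^{\pm iF_u} - e^{\pm iF_v} = \pm i(F_u - F_v)\int_0^1 e^{\pm i(tF_v + (1 - t)F_u)}\,\d t
\]
together with the factorisation $|u|^2 - |v|^2 = (u - v)\overline u + v\overline{(u - v)}$ reduces the problem to estimating the antiderivative $\partial_x^{-1}[(u - v)\overline u + v\overline{(u - v)}]$ in an appropriate modulation norm and then multiplying by an integrand already handled by (A). This yields the desired Lipschitz estimate; the time-continuous variant follows by running the same estimates uniformly in $t \in I$, since $t \mapsto F_{u(t)}$ inherits the necessary continuity from $u \in C(I, M^s_{2,p})$.

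The main obstacle is the bootstrap in (A): $e^{\pm iF_w}$ has modulus one, yet for $p > 2$ the function $F_w$ need not even be bounded (since $M^s_{2,p}$ does not embed into $L^2$ at this regularity), so $e^{\pm iF_w}$ has no a~priori membership in any algebra-type modulation space. The trick is never to analyse the exponential in isolation, but always through the identity $\partial_x e^{\pm iF_w} = \pm i|w|^2 e^{\pm iF_w}$: each derivative of $e^{\pm iF_w}$ is exchanged for a factor of $|w|^2$, reducing every estimate to a product/antiderivative computation in $M^s_{2,p}$-type spaces where~\eqref{eq:modulation-leibniz} and~\eqref{eq:modulation-sobolev} suffice, and the regularity threshold $s > \1{2} - \1{p}$ is exactly what makes the resulting power series converge.
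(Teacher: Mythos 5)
Your overall architecture is the same as the paper's (which follows the argument of~\cite{Herr2006}*{Appendix~A}): telescope the difference, expand the exponential in a power series using the algebra property of $M_{\infty,1}$, trade the antiderivative's derivative gain for frequency summability, and factor $|u|^2-|v|^2$ to get the Lipschitz bound. However, your step (A) as configured has a genuine gap. Applying~\eqref{eq:modulation-leibniz} with $(q_1,q_2)=(1,p)$ forces you to control $\bignorm{e^{\pm iF_w}}_{M^s_{\infty,1}}$, which at the bottom of the Taylor expansion requires the bilinear bound $\norm{\mathcal{I}(fg)}_{M^s_{\infty,1}}\lesssim\norm{f}_{M^s_{2,p}}\norm{g}_{M^s_{2,p}}$. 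This fails at the claimed threshold for $p>2$: taking $f=g=w$ with $\norm{\Box_k w}_{L^2}\sim N^{-s-\1{p}}$ for $k\in[N,2N]$ gives $\sum_n\JBX[n]^{s-1}\norm{\Box_n(f\bar g)}_{L^\infty}\gtrsim N^{1-s-\frac{2}{p}}$, so the estimate requires $s\ge 1-\frac{2}{p}$, which is strictly stronger than $s>\1{2}-\1{p}$ whenever $p>2$. The correct Hölder configuration is $\norm{(G_v-G_w)u}_{M^s_{2,p}}\lesssim\norm{G_v-G_w}_{M^s_{\infty,\tilde p}}\norm{u}_{M_{2,2}}+\norm{G_v-G_w}_{M_{\infty,1}}\norm{u}_{M^s_{2,p}}$ with $\1{p'}=\1{2}+\1{\tilde p'}$: the first factor of $u$ is placed in $L^2$, which is exactly what the hypothesis $s>\1{2}-\1{p}$ buys via~\eqref{eq:modulation-sobolev}, and the exponential only needs to be summed in the much weaker $\ell^{\tilde p}$ norm.

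Relatedly, you misdiagnose the ``main obstacle'': under the hypothesis $s>\1{2}-\1{p}$ the embedding $M^s_{2,p}\hookrightarrow L^2$ \emph{does} hold (this is precisely why that threshold appears), so $F_w$ is bounded and $e^{\pm iF_w}$ is a bounded function with bounded phase --- there is no issue of $F_w$ being unbounded. The real work is the frequency summability of $\mathcal{I}(fg)$ in $M_{\infty,1}$ and $M^s_{\infty,\tilde p}$, which the paper handles by a low/high frequency split: at low frequencies one estimates the finitely many blocks in $L^\infty$ by Cauchy--Schwarz, and at high frequencies one replaces $\mathcal{I}$ by $J^{-1}$ and spends the full derivative gain on Hölder in the outer $\ell^1$ (respectively $\ell^{\tilde p}$) sum followed by Sobolev-type embeddings. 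Your step (B) and the passage to $C(I,M^s_{2,p})$ are fine once (A) is repaired along these lines.
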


\begin{remark}
    The regularity restriction $s > \1{2}-\1{p}$ is only natural since this is necessary for the embedding $M^s_{2,p} \subset L^2$ to hold, which in turn is necessary for the gauge-transformation to be well-defined.
\end{remark}

\begin{proof}[Proof of Lemma~\ref{lem:gauge-continuity-modulation}]
    In order to simplify notation we will only make the argument for $\mathcal{G}_+$, the minus-case works the same, and we also introduce the notation
    \begin{equation}
        G_u(x) = \exp\paren{i\int_{-\infty}^x |u(y)|^2 \d{y}} \quad\text{and}\quad \mathcal{I}(u)(x) = \int_{-\infty}^x |u(y)| \d{y}
    \end{equation}
    notwithstanding possible $t$ dependence of $u$, so the gauge-transformation may be written as $\mathcal{G}_+(u)(x) = G_u u(x)$.

    We will be following an argument given in~\cite{Herr2006}*{Appendix~A}, thus we will establish an estimate
    \begin{equation}\label{eq:general-gt-est}
        \norm{(G_v - G_w)u}_{M^s_{2,p}} \lesssim e^{c \norm{v}^2_{M^s_{2,p}} + c \norm{w}^2_{M^s_{2,p}}} \norm{v+w}_{M^s_{2,p}} \norm{v-w}_{M^s_{2,p}} \norm{u}_{M^s_{2,p}}.
    \end{equation}
    With~\eqref{eq:general-gt-est} we may argue the Lipschitz continuity of $\mathcal{G}_+$ for functions $u, w \in B_r(0) \subset M^s_{2,p}$ as follows
    \begin{align*}
        \norm{\mathcal{G}_+(u) - \mathcal{G}_+(v)}_{M^s_{2,p}} &\lesssim \norm{(G_u - G_v)u}_{M^s_{2,p}} + \norm{(G_v - 1)(u-v)}_{M^s_{2,p}} + \norm{u-v}_{M^s_{2,p}}\\
        &\lesssim (re^{2cr^2} + re^{cr^2} + 1)\norm{u - v}_{M^s_{2,p}} \lesssim_r \norm{u - v}_{M^s_{2,p}}.
    \end{align*}

    We are left to argue~\eqref{eq:general-gt-est}. First we use the generalised Leibniz rule for modulation spaces~\eqref{eq:modulation-leibniz} which results in
    \begin{equation}\label{eq:gt-est-leibniz}
        \norm{(G_v - G_w)u}_{M^s_{2,p}} \lesssim \norm{G_v - G_w}_{M^s_{\infty,\tilde{p}}} \norm{u}_{M_{2,2}} + \norm{G_v - G_w}_{M_{\infty,1}} \norm{u}_{M^s_{2,p}},
    \end{equation}
    where $\1{p'} = \1{2} + \1{\tilde p'}$. Looking at the second term in the sum we must estimate $G_v - G_w$ in the $M_{\infty, 1}$ norm. We use the algebra property of this space and the power series expansion of the exponential function to arrive at
    \begin{align*}
        &\norm{G_v - G_w}_{M_{\infty,1}}\\ &\lesssim \norm{\mathcal{I}(|v|^2 - |w^2|)}_{M_{\infty, 1}} \sum_{k=1}^\infty \1{k!}\sum_{j=0}^{k-1} (c\norm{\mathcal{I}(|v|^2)}_{M_{\infty,1}})^j (c\norm{\mathcal{I}(|w|^2)}_{M_{\infty,1}})^{k-j-1}\\
        &\lesssim \norm{\mathcal{I}(|v+w||v-w|)}_{M_{\infty, 1}} \exp(c\norm{\mathcal{I}(|v|^2)}_{M_{\infty, 1}} + c\norm{\mathcal{I}(|w|^2)}_{M_{\infty, 1}}).
    \end{align*}
    From here, if we are now able to argue the bilinear estimate
    \begin{equation}
        \norm{\mathcal{I}(f g)}_{M_{\infty, 1}} \lesssim \norm{f}_{M^s_{2,p}} \norm{g}_{M^s_{2, p}},
    \end{equation}
    we arrive at our desired~\eqref{eq:general-gt-est}.
    We look at two cases depending on the magnitude of the frequency of $\mathcal{I}(f g)$ because of the singularity introduced by $\mathcal{I}$ at low frequencies.
    \begin{enumerate}[wide]
        \item \textbf{low frequencies}: Since here we only have finitely many terms in the outer $\ell^1$ norm we may estimate by $L^\infty$, use Hölder's inequality and a Sobolev-type embedding for modulation spaces~\eqref{eq:modulation-sobolev} to arrive at
        \begin{equation}
            \norm{P_{1} \mathcal{I}(fg)}_{M_{\infty, 1}} \lesssim \norm{\mathcal{I}(fg)}_{L^\infty} \lesssim \norm{f}_{L^2} \norm{g}_{L^2} \lesssim \norm{f}_{M^s_{2, p}} \norm{g}_{M^s_{2, p}},
        \end{equation}
        since $s > \1{2}-\1{p}$.
        \item \textbf{high frequencies}: In this situation we may replace $\mathcal{I}(fg)$ with a Bessel potential operator
        \begin{align*}
            \norm{P_{> 1} \mathcal{I}(fg)}_{M_{\infty, 1}} &\lesssim \norm{J^{-1} (fg)}_{M_{\infty, 1}} \lesssim \norm{fg}_{M_{\infty, r}}\\ &\lesssim \norm{f}_{M_{\infty,\rho}} \norm{g}_{M_{\infty,\rho}} \lesssim \norm{f}_{M^s_{2,p}} \norm{g}_{M^s_{2,p}}
        \end{align*}
        where we then use Hölder's inequality with $r = \infty-$ in the outer $\ell^1$ norm and then Hölder's inequality again in the outer norm, with $\1{r'} = \frac{2}{\rho'} \Leftrightarrow \1{\rho} = \1{2}+$. Finally we use a Sobolev-type embedding for modulation spaces~\eqref{eq:modulation-sobolev} which requires $s > \1{\rho} - \1{p} = \1{2} - \1{p} +$.
    \end{enumerate}

    Now we turn to the first term in the sum in~\eqref{eq:gt-est-leibniz}. The $M_{2, 2} = L^2$ norm of $u$ may again be estimated by $\norm{u}_{M^s_{2, p}}$ due to the Sobolev-type embedding~\eqref{eq:modulation-sobolev}. For the other factor we argue similarly to the above, noting that $M^s_{\infty, \tilde p}$ is also an algebra since $s > \1{2} - \1{q} = \1{\tilde p'}$, though this time we require a bilinear estimate of the form
    \begin{equation}
        \norm{\mathcal{I}(f g)}_{M^s_{\infty, \tilde p}} \lesssim \norm{f}_{M^s_{2,p}} \norm{g}_{M^s_{2, p}}.
    \end{equation}
    For low frequencies we may reuse our argument from above, since in that case $\norm{P_1 \mathcal{I}(f g)}_{M^s_{\infty, \tilde p}} \lesssim \norm{\mathcal{I}(f g)}_{M_{\infty, 1}}$, whereas for high frequencies we argue
    \begin{equation}
        \norm{P_{> 1} \mathcal{I}(f g)}_{M^s_{\infty, \tilde p}} \lesssim \norm{f g}_{M^{s-1}_{\infty, \tilde p}} \lesssim \norm{f g}_{H^{s-1+s'}}
    \end{equation}
    where $s' > \1{\tilde p} - \1{2} = \1{p}$. Then $s -1 + s' = -\1{2}+$ and we may use a Sobolev embedding and Hölder's inequality
    \begin{equation}
        \lesssim \norm{fg}_{L^{1+}} \lesssim \norm{f}_{L^{2+}} \norm{g}_{L^{2+}} \lesssim \norm{f}_{M^s_{2,p}} \norm{g}_{M^s_{2, p}},
    \end{equation}
    where in the final inequality we used a Sobolev-type embedding for modulation spaces~\eqref{eq:modulation-sobolev} again.

    The claim of continuity of $\mathcal{G}_\pm$ on $C(I, M^s_{2,p})$ follows by replacing the $M^s_{2,p}$ norms by $L^\infty_t M^s_{2,p}$ norms. We omit the details.
\end{proof}

\section{Statement of results}\label{sec:statement-of-results}

\subsection{Prior work}
Before we state our main results let us quickly review the literature regarding low regularity well-posedness results for the dNLS equation itself as well as the fourth order dNLS hierarchy equation ($j=2$). To the author's knowledge the other, higher-order, equations part of the dNLS hierarchy do not yet appear in the literature. Giving a complete account of the well-posedness theory (especially concerning results of the inverse scattering community) though is beyond our scope, so we will focus mostly on comparable results to our own.

As is unsurprising the dNLS equation (and variants of it) were first tackled using the energy method, see~\cites{MR621533,MR634894}, achieving local well-posedness for initial data in $H^s$ (independent of the underlying geometry) for $s > \frac{3}{2}$.

On the line these results were later improved in~\cite{HayashiOzawa1992} to cover both local and global well-posedness (thanks to energy conservation) in $H^1(\R)$, under the restriction that the mass of the initial data be smaller than $2\pi$. Already here the gauge-transformation was used in order to make the equation approachable using dispersive PDE techniques.

In parallel it was begun to utilise the dispersive character of the equation\footnote{Not just the dNLS equation was considered here, but a rather large class with arbitrary polynomial nonlinearity involving derivatives.} in~\cite{MR1230709}, where a variant of Kato smoothing together with a maximal function estimate was used in order to establish small data local well-posedness in $H^{\frac{7}{2}}(\R)$. 

Using multilinear refinements of smoothing estimates for the Schrödinger propagator together with $X_{s,b}$ spaces the local well-posedness result could be pushed down to $H^\1{2}(\R)$. See~\cite{Takaoka1999}. In a subsequent paper~\cite{Takaoka2001}, for $s > \frac{32}{33}$, these newly constructed local solutions were extended globally using the splitting-argument, which was initially developed by Bourgain. It was also recorded that, since the flow fails to be thrice continuously differentiable for $s < \1{2}$, there was no hope in further improving the local result on the line using the contraction mapping theorem alone.

More dire still, after in~\cite{BiagioniLinares2001} it had been established using exact soliton solutions to the dNLS equation, that the flow of the dNLS equation cannot be uniformly continuous for $s < \1{2}$.

On the front of improvements to global well-posedness, after a refined version of the splitting-argument, today usually referred to as the I-method, had been developed, the global result could be pushed down to almost match the (now known to be optimal, using fixed-point methods) local result. That is, in~\cites{MR1871414,MR1950826} it was proven that the dNLS equation is globally well-posed in $H^s(\R)$ for $s > \1{2}$, conditioned on a mass below $2\pi$.

Global well-posedness in the endpoint $s = \1{2}$, under the same mass restriction as previously, was later shown by different authors~\cite{MR2823664}, again using the I-method, but additionally using a resonant decomposition technique to better control a singularity arising from resonant interactions.

Trying to push the local result further towards the scaling critical space, Fourier-Lebesgue spaces were employed in~\cite{AG2005}, where then local well-posedness was achieved in $\hat{H}^s_r(\R)$ for $s \ge \1{2}$ and $2 \ge r > 1$. This covers the entire scaling sub-critical configuration of parameters.

As modulation spaces moved into focus of the dispersive PDE community these spaces were also employed in order to move well-posedness results closer to the scaling critical space. In~\cite{Guo2021} local well-posedness for initial data in $M^\1{2}_{2,q}$ for $4 \le q < \infty$ was proven. Here $M^\1{2}_{2,\infty}$ is understood to be the critical space, even though modulation spaces are not well-behaved under transformations of scale. It is of note, that in the previously cited work the continuity of the gauge-transform in appropriate modulation spaces was not discussed. We resolve this issue with Lemma~\ref{lem:gauge-continuity-modulation}.

The mass restriction though, that had so far been part of all global results, turned out to be a mere technically arising restriction. This, over the course of~\cites{MR3198590,MR3393674}, could be lifted from $2\pi$ to $4\pi$ using the sharp version of the Gagliardo-Nirenberg inequality for global solutions in $H^1(\R)$. This result was later then extended to also cover the full range of possible local results, i.e. in~\cite{MR3583477} it was shown that, under the lighter mass restriction of $4\pi$ solutions with initial data in $H^\1{2}(\R)$ extended globally.

The most recent and extensive results concerning the low-regularity well-posedness theory of the dNLS equation were achieved with methods stemming from the equation's complete integrability. Using those techniques it was possible to prove global well-posedness held in the scaling critical space $L^2(\R)$ with no restriction on the mass of the initial data~\cites{MR4565673,MR4628747}. Moreover, those two papers and references therein give a nice, general overview of recent well-posedness results for the dNLS equation achieved with inverse scattering/complete integrability.

Since we are less concerned with results pertaining to periodic initial data we will stick to headlines only. It was only with~\cite{Herr2006} that a version of the gauge-transformation was found, such that the dNLS equation could be attacked using fixed-point methods on the torus. Here the optimal local well-posedness result could immediately be paralleled (despite the lack of strong smoothing effects of the Schrödinger group), i.e. well-posedness for initial data in $H^\1{2}(\T)$ was achieved. The argument used the $L^4$ Strichartz estimate extensively. Ill-posedness, in the sense of failure of thrice differentiability of the flow below $s = \1{2}$ is contained in the same work.

Here again, Fourier-Lebesgue spaces (that in the periodic setting coincide with modulation spaces) could be used in order to push the local well-posedness result nearer the scaling critical space. Over the course of~\cites{MR2390318,AG2005} well-posedness could be extended to initial data in $\hat{H}^s_r(\T)$ for $s =\1{2}$ and $2 \ge r > \frac{4}{3}$. Covering local well-posedness in the remainder of the subcritical range, that is $\hat{H}^\1{2}_r(\T)$ for $r > 1$, was then achieved in~\cite{MR4259382}.

Much fewer works have so far dealt with any higher-order dNLS hierarchy equations. We mention~\cite{MR2818712}, where a well-posedness results covering the fourth-order dNLS equation is proven. Specifically small data local well-posedness for data in $H^s(\R)$, $s > 4$, is established.

This was later improved in~\cite{Ikeda2021} to small data well-posedness for data in $H^1(\R)$. The dNLS hierarchy equation is also explicitly mentioned in this work. Further low-regularity well-posedness results covering the higher-order dNLS hierarchy equations are not present in the literature, to the author's best knowledge.

\subsection{Main results}
First we consider a general Cauchy problem for an evolution equation of the form
\begin{equation}\label{eq:general-cauchy}
    \begin{cases}
    i\partial_t u + (-1)^{j+1} \partial_x^{2j} u = F(u)\\
    u(t=0) = u_0
    \end{cases},
\end{equation}
where we are able to derive the following well-posedness theorems for data in Fourier-Lebesgue and modulation spaces regarding the dNLS hierarchy.

\begin{theorem}\label{thm:hierarchy-wellposed-hat-spaces}
    Let $j \ge 2$ and~\eqref{eq:general-cauchy} be the $j$th dNLS hierarchy equation.
    If $1 < r \le 2$ and $s \ge \1{2} + \frac{j-1}{r'}$, the Cauchy problem~\eqref{eq:general-cauchy} with initial data $u_0 \in \hat H^s_r(\R)$ is locally well-posed, with the solution map being Lipschitz continuous on bounded sets.
\end{theorem}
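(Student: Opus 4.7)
The plan is to reduce the theorem to well-posedness of the associated gauged dNLS equation (Theorem~\ref{thm:gauged-wellposed-hat}, stated and proved later in Section~\ref{sec:multilinear-estimates}) and then transport this result back to the original equation via the gauge-transformation. Concretely, given initial data $u_0 \in \hat H^s_r(\R)$ with $s \ge \1{2} + \frac{j-1}{r'}$, I would first set $v_0 \coloneqq \mathcal{G}_-(u_0)$. By Lemma~\ref{lem:gauge-continuity-hat}, which applies because $s \ge \1{2}$ and $1 < r \le 2$, $v_0$ lies in $\hat H^s_r(\R)$ with norm bounded in terms of $\|u_0\|_{\hat H^s_r}$, and the map $u_0 \mapsto v_0$ is Lipschitz continuous on bounded sets.

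Next, I would invoke Theorem~\ref{thm:gauged-wellposed-hat} applied to the gauged dNLS equation produced by Proposition~\ref{prop:gauge-transformation} to obtain, on some time interval $I = [-T,T]$ with $T = T(\|u_0\|_{\hat H^s_r}) > 0$, a unique solution $v \in C(I, \hat H^s_r) \cap X_{s,b}$ (in the appropriate Bourgain-type space) with Lipschitz-continuous dependence on $v_0$. Defining $u(x,t) \coloneqq \mathcal{G}_+(v)(x,t)$ and again applying Lemma~\ref{lem:gauge-continuity-hat} (in the space-time version $C(I, \hat H^s_r) \to C(I, \hat H^s_r)$), we get $u \in C(I, \hat H^s_r)$ with the map $v \mapsto u$ Lipschitz continuous on bounded sets. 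By Proposition~\ref{prop:gauge-transformation}, $u$ solves the $j$th dNLS hierarchy equation with initial data $\mathcal{G}_+(v_0) = u_0$ (using $\mathcal{G}_+ \circ \mathcal{G}_- = \mathrm{id}$).

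The Lipschitz continuity of the solution map $u_0 \mapsto u$ on bounded sets then follows by composing the three Lipschitz bounds (for $\mathcal{G}_-$, for the gauged solution map, and for $\mathcal{G}_+$). Existence, continuous dependence and persistence of regularity are thus reduced to the corresponding statements for the gauged problem.

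The only real subtlety, and the step I expect to require care, is \emph{uniqueness}, which the author flags in the footnote to the introduction: the solution produced this way is unique only within the class of functions whose gauge transforms satisfy the fixed-point equation in $X_{s,b}$, so uniqueness for the original equation holds in the pulled-back class rather than in $C(I, \hat H^s_r)$ as such. I would therefore state uniqueness in this pulled-back sense, mirroring Remark~\ref{remark:uniqueness}. All the genuine analytic work, namely the multilinear estimates in Bourgain spaces needed to close the contraction at the regularity threshold $s = \1{2} + \frac{j-1}{r'}$, is deferred to the proof of Theorem~\ref{thm:gauged-wellposed-hat}; the present theorem is essentially a corollary of that result together with Proposition~\ref{prop:gauge-transformation} and Lemma~\ref{lem:gauge-continuity-hat}.
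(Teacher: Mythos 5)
Your proposal is correct and follows essentially the same route as the paper: Theorem~\ref{thm:hierarchy-wellposed-hat-spaces} is obtained there precisely by combining Theorem~\ref{thm:gauged-wellposed-hat} with Proposition~\ref{prop:gauge-transformation} and the bi-Lipschitz continuity of $\mathcal{G}_\pm$ from Lemma~\ref{lem:gauge-continuity-hat}, with the same caveat on uniqueness recorded in Remark~\ref{remark:uniqueness}. Your observation that the composition of Lipschitz maps explains why the solution map is only Lipschitz (rather than analytic) also matches the paper's own discussion.
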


For $j = 1$ this theorem corresponds to the well-posedness of the dNLS equation in Fourier-Lebesgue spaces on the line and is already known in the literature~\cite{AG2005}.

\begin{remark}
    The condition $r \le 2$ appears naturally in this context, because of the use of the gauge-transformation, the well-definedness of which requires $L^2 \supset \hat H^s_r$.
\end{remark}

\begin{theorem}\label{thm:hierarchy-wellposed-modulation-spaces}
    Let $j \ge 2$ and~\eqref{eq:general-cauchy} be the $j$th dNLS hierarchy equation. Then for $2 \le p < \infty$ and $s \ge \frac{j}{2}$, the Cauchy problem~\eqref{eq:general-cauchy} with initial data $u_0 \in M^s_{2,p}(\R)$ is locally well-posed with a solution map that is Lipschitz continuous on bounded subsets.
\end{theorem}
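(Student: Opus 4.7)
The plan is to reduce Theorem~\ref{thm:hierarchy-wellposed-modulation-spaces} to the well-posedness result for the associated gauged dNLS equation, namely Theorem~\ref{thm:gauged-wellposed-modulation}, by exploiting the Lipschitz continuity of the gauge-transformation $\mathcal{G}_\pm$ on modulation spaces established in Lemma~\ref{lem:gauge-continuity-modulation}, together with the dictionary between dNLS hierarchy equations and gauged dNLS equations supplied by Proposition~\ref{prop:gauge-transformation}. The strategy is thus the standard one: transform the initial data, solve the gauged equation, and apply the inverse gauge-transformation to recover a solution of the hierarchy equation itself.

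First I would verify that the assumed regularity $s \ge j/2$ lies above the threshold $\frac{1}{2} - \frac{1}{p}$ required by Lemma~\ref{lem:gauge-continuity-modulation}: since $j \ge 2$ and $p \ge 2$, one has $s \ge 1 > \frac{1}{2} - \frac{1}{p}$, so the gauge-transformation is Lipschitz continuous on bounded subsets of $M^s_{2,p}(\R)$ and also between the time-dependent spaces $C(I, M^s_{2,p})$ for any interval $I\subset\R$. Given $u_0 \in M^s_{2,p}$, I set $v_0 := \mathcal{G}_-(u_0)$, which again lies in $M^s_{2,p}$ with norm controlled by $\norm{u_0}_{M^s_{2,p}}$.

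Invoking Theorem~\ref{thm:gauged-wellposed-modulation} then yields a local-in-time solution $v \in C([-T, T], M^s_{2,p})$ of the corresponding gauged dNLS equation with $v(0) = v_0$, where the existence time $T > 0$ depends only on $\norm{v_0}_{M^s_{2,p}}$, and hence only on $\norm{u_0}_{M^s_{2,p}}$, and the associated solution map is Lipschitz continuous on bounded sets. Setting $u(t) := \mathcal{G}_+(v(t))$, Proposition~\ref{prop:gauge-transformation} guarantees that $u$ solves the $j$th dNLS hierarchy equation with $u(0) = u_0$, while the time-dependent continuity bound from Lemma~\ref{lem:gauge-continuity-modulation} places $u \in C([-T, T], M^s_{2,p})$. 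Composing the three Lipschitz maps $u_0 \mapsto v_0 \mapsto v \mapsto u$ then delivers the required Lipschitz continuity of the solution map $u_0 \mapsto u$ on bounded subsets of $M^s_{2,p}$.

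The real analytical work has already been absorbed into the preceding material: the multilinear estimates underpinning Theorem~\ref{thm:gauged-wellposed-modulation} carry the dispersive/smoothing analysis in $X_{s,b}$ spaces adapted to $M^s_{2,p}$, Proposition~\ref{prop:gauge-transformation} handles the combinatorial identity matching the coefficients of the `bad' cubics with those produced by $\mathcal{G}_\pm$, and Lemma~\ref{lem:gauge-continuity-modulation} supplies the continuity needed for pulling back. The only point I would treat carefully at this stage is the uniqueness statement, which under the gauge-transformation is only meaningful up to its fibres; as flagged in Remark~\ref{remark:uniqueness}, I would phrase the uniqueness of $u$ as inherited from the uniqueness of $v$ in the gauged class, post-composed with $\mathcal{G}_+$.
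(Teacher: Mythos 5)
Your proposal is correct and follows exactly the route the paper takes: the paper states that Theorem~\ref{thm:hierarchy-wellposed-modulation-spaces} follows from the gauged well-posedness result (Theorem~\ref{thm:gauged-wellposed-modulation}, via Propositions~\ref{prop:cubic-estimate-modulation} and~\ref{prop:quintic-estimate-modulation}) combined with Proposition~\ref{prop:gauge-transformation} and the bi-Lipschitz continuity of $\mathcal{G}_\pm$ from Lemma~\ref{lem:gauge-continuity-modulation}, which is precisely your composition $u_0 \mapsto v_0 \mapsto v \mapsto u$. Your checks that $s \ge j/2 > \1{2}-\1{p}$ and your handling of the uniqueness caveat via Remark~\ref{remark:uniqueness} match the paper's treatment.
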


Again, for $j=1$ (and $4 \le p$) the well-posedness of the dNLS equation in modulation spaces on the line can already be found in the literature, see~\cite{Guo2021}, though there the continuity of the gauge-transformation is not discussed.

\begin{remark}\label{remark:uniqueness}
It was already noted immediately after stating Theorem~3 in~\cite{AG2005}, that the uniqueness statement in the preceding well-posedness theorems (and in~\cite{AG2005}) was to be carefully interpreted. Due to the gauge-transformation, uniqueness of a solution $u$ only holds with respect to other solutions $v$ that fulfil the artificial seeming condition that $\mathcal{G}_-v$ must also solve the associated gauge-transformed equation (corresponding to a dNLS hierarchy equation).
\end{remark}

Most noticeable about these theorems, in comparison with their analogues for the NLS hierarchy equations~\cite{Adams2024}*{Theorems~3.1 and~3.2}, is the lower regularity of the solution map: being merely Lipschitz continuous rather than analytic. This is due to the fact that Theorems~\ref{thm:hierarchy-wellposed-hat-spaces} and~\ref{thm:hierarchy-wellposed-modulation-spaces} are derived from the following well-posedness theorems concerned with gauged dNLS equations. We remind the reader that a gauged dNLS equation contains no `bad' cubic nonlinear terms, i.e. where no derivatives fall on the complex conjugated term.

Since we only know the gauge-trans\-for\-ma\-tion to be a bi-Lipschitz continuous map on bounded sets, see Lemmas~\ref{lem:gauge-continuity-hat} and~\ref{lem:gauge-continuity-modulation}. Hence the pull-back of the solution map is not analytic but merely Lipschitz continuous.

\begin{theorem}\label{thm:gauged-wellposed-hat}
    Let $j \ge 2$ and~\eqref{eq:general-cauchy} be a gauged dNLS equation. Then
    \begin{enumerate}
        \item if $1 < r \le 2$ and $s \ge \1{2} + \frac{j-1}{r'}$, the Cauchy problem~\eqref{eq:general-cauchy} with initial data $u_0 \in \hat H^s_r(\R)$ is locally well-posed with an analytic solution map,
        \item if additionally~\eqref{eq:general-cauchy} contains no cubic nonlinear terms, $1 < r \le 2$ and $s > \1{r} - \1{2}$, the Cauchy problem with initial data $u_0 \in \hat H^s_r(\R)$ is locally well-posed with an analytic solution map.
    \end{enumerate}
\end{theorem}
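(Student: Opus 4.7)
The approach is the Fourier restriction norm method in Bourgain-type spaces $X^r_{s,b}$ adapted to $\hat H^s_r$, with norm
\[
    \norm{u}_{X^r_{s,b}} = \bignorm{\JBX[\xi]^s \JBX[\tau - \xi^{2j}]^b \hat u}_{L^{r'}_{\tau, \xi}}.
\]
Writing~\eqref{eq:general-cauchy} in Duhamel form, localising in time by a smooth cut-off, and applying the standard homogeneous and inhomogeneous linear estimates for these adapted Bourgain spaces (recalled from~\cite{Adams2024} in Section~\ref{sec:known-estimates}), local well-posedness with an analytic flow map follows from a Picard iteration in a ball of $X^r_{s,b}$ (with $b$ slightly larger than $1/2$) as soon as one shows, for every monomial appearing in~\eqref{eq:gauged-dnls-eqn} (so $|\alpha| = 2j-k$, and for $k=1$ additionally $\alpha_2 \ge 1$), the multilinear estimate
\[
    \bignorm{\partial_x^{\alpha_1} u_1 \prod_{i=1}^k \partial_x^{\alpha_{2i}} \overline{u_{2i}}\, \partial_x^{\alpha_{2i+1}} u_{2i+1}}_{X^r_{s, b-1}} \lesssim \prod_{i=1}^{2k+1} \norm{u_i}_{X^r_{s,b}}.
\]

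For the higher-order monomials ($k \ge 2$) the derivative count $|\alpha| = 2j-k \le 2j-2$ is strictly less than the $2j$ derivatives handled at each multilinearity order for the NLS hierarchy in~\cite{Adams2024}; consequently, the bilinear Strichartz refinements and Kato-type smoothing estimates from loc.\ cit.\ that are recalled in Section~\ref{sec:known-estimates} close these contributions at the threshold $s \ge \1{2} + \frac{j-1}{r'}$ with room to spare. The decisive case is the cubic one ($k = 1$), where $2j-1$ derivatives are distributed across $u, \overline u, u$ with at least one falling on $\overline u$. After passing to the Fourier side with convolution variables $\xi = \xi_1 - \xi_2 + \xi_3$ (where $\xi_2$ is the $\overline u$ frequency), the cubic resonance $\Omega = \xi^{2j} - \xi_1^{2j} + \xi_2^{2j} - \xi_3^{2j}$ vanishes precisely on $\set{\xi = \xi_1} \cup \set{\xi = \xi_3}$ and admits a factorisation that, away from these diagonals, yields a lower bound of order $\max_i|\xi_i|^{2j-1}$ times a harmless low-frequency contribution --- this is the resonance-relation estimate from the literature alluded to in Section~\ref{sec:known-estimates}, and it specialises for $j=1$ to the classical $|\Omega| = 2|\xi-\xi_1||\xi-\xi_3|$ used in~\cite{AG2005}. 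Splitting into cases according to which of $\JBX[\tau - \xi^{2j}]$, $\JBX[\tau_i - \xi_i^{2j}]$ is largest distributes these $2j-1$ derivatives of gain between the output and input regularities; combined with the derivative structurally forced onto $\overline u$, this exactly accounts for the $2j-1$ derivatives in the monomial. Careful Hölder and bilinear-Strichartz book-keeping then closes the estimate at $s = \1{2} + \frac{j-1}{r'}$. I expect this final balancing act --- near the resonance diagonals, while keeping $b$ above $1/2$ and the Fourier-Lebesgue exponent $r'$ on both sides --- to be the main technical obstacle, and it is what forces the $\1{2}$ in the regularity threshold.

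Part~(2) follows along the same lines but without the cubic step. With no cubic monomials present, the threshold may be lowered all the way to $s > \1{r} - \1{2} = s_c(j,r)$, the scaling-critical exponent identified in Remark~\ref{rem:criticality}: the multilinear estimates for $k \ge 2$ balance exactly at the critical scaling, and a dyadic summation absorbing a small $\epsilon$-loss coming from $s > s_c$ closes the fixed-point argument on the entire sub-critical range up to the endpoint.
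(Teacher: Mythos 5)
Your overall architecture (Duhamel formula plus contraction in $\hat X^r_{s,b}$, reduced to one multilinear estimate per monomial, with the cubic term as the decisive case) matches the paper, but the two places where you wave your hands are exactly where the paper has to work hardest, and as written your cubic argument does not close. The resonance bound of Lemma~\ref{lem:modulation-estimate} degenerates near the diagonals $\xi=\xi_1$, $\xi=\xi_3$ (in your convention), so in the genuinely resonant region the modulation yields essentially no derivative gain, and no amount of splitting according to which modulation is largest plus bilinear-Strichartz book-keeping recovers the $2j-1$ derivatives there. The paper's resolution is structural: precisely when $|\xi_2|\langle\xi\rangle\lesssim\langle\xi_1\rangle\langle\xi_3\rangle$ (or one of $|\xi_2|,|\xi|$ is small) one uses $\alpha_2\ge 1$ to trade half a derivative off $\overline{u_2}$ onto $u_1$ and $u_3$ via the pointwise inequality~\eqref{eq:cubic-nice-ineq-hat}, converting the term into an NLS-hierarchy-type cubic with $2(j-1)$ derivatives and importing Proposition~\ref{prop:nls-cubic-hat-estimate} from~\cite{Adams2024} wholesale; only in the complementary, non-resonant regimes is the modulation gain combined with the bilinear operators $I^{\pm}_{r,j}$. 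Your proposal gestures at ``the derivative structurally forced onto $\overline u$'' but never supplies this reduction, which is the actual content of Proposition~\ref{prop:cubic-estimate-hat}. (A smaller point: in the $L^{r'}$-based spaces the linear and transfer estimates require $b>\1{r}$, not merely $b$ slightly above $\1{2}$, which matters for $r$ near $1$.)

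The second gap concerns the higher-order terms. You assert they close ``with room to spare'' directly from the recalled estimates, and that part~(2) reaches all $s>\1{r}-\1{2}$ by a dyadic summation absorbing an $\eps$-loss. In fact the paper's near-endpoint argument (Proposition~\ref{prop:quintic-estimate-hat-endpoint}) relies on $s<\1{r}$, which is incompatible with the relevant regularities once $r$ moves away from $1$; the full range $1<r\le 2$ is only obtained by proving a separate $L^2$-based estimate (the $p=2$ case of Proposition~\ref{prop:quintic-estimate-modulation}, via Kato smoothing and the maximal function estimate) and then applying multilinear complex interpolation to obtain Corollary~\ref{cor:quintic-estimate-hat}. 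That two-part structure is not optional: without it neither part~(1) nor part~(2) is covered for intermediate values of $r$.
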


\begin{theorem}\label{thm:gauged-wellposed-modulation}
    Let $j \ge 2$ and~\eqref{eq:general-cauchy} be a gauged dNLS equation. Then
    \begin{enumerate}
        \item if $2 \le p < \infty$ and $s \ge \frac{j}{2}$, the Cauchy problem~\eqref{eq:general-cauchy} with initial data $u_0 \in M^s_{2,p}(\R)$ is locally well-posed with an analytic solution map,
        \item if additionally~\eqref{eq:general-cauchy} contains no cubic nonlinear terms and $2 \le p \le \infty$, let $k \ge 2$ be the smallest index for which $c_{k,\alpha} \not= 0$ (in~\eqref{eq:gauged-dnls-eqn}) for a choice of $\alpha \in \N_0^{2k+1}$. Then for $s > \1{2} + \1{4k} - \frac{2k+1}{2kp}$, the Cauchy problem with initial data $u_0 \in M^s_{2,p}(\R)$ is locally well-posed with an analytic solution map.
    \end{enumerate}
\end{theorem}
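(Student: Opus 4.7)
The plan is to use the Fourier restriction norm method in a Bourgain space $X^{s,b}_{2,p}$ adapted to $M^s_{2,p}$, defined by the norm $\norm{u}_{X^{s,b}_{2,p}} = \norm{\JBX[n]^s \JBX[\tau - n^{2j}]^b \norm{\Box_n u}_{L^2_{tx}}}_{\ell^p_n(\Z)}$ (or the analogous version with a uniform frequency-modulation decomposition on both $\xi$ and $\tau-\xi^{2j}$). After time-localisation to an interval $[0, T]$ via a suitable cutoff in $b$, local well-posedness by contraction reduces to multilinear estimates of the form
\begin{equation*}
    \bignorm{\partial_x^{\alpha_1} u \prod_{i=1}^{k} \partial_x^{\alpha_{2i}}\overline{u}\, \partial_x^{\alpha_{2i+1}} u}_{X^{s,b'}_{2,p}} \lesssim \prod \norm{u}_{X^{s,b}_{2,p}}
\end{equation*}
for suitable $b>\1{2}>-b'>-\1{2}$, for every multi-index $\alpha$ allowed in~\eqref{eq:gauged-dnls-eqn}. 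The main building blocks will be the linear and bilinear Strichartz/smoothing estimates (of Kato type) for the propagator $e^{i(-1)^{j+1}t\partial_x^{2j}}$ recalled in Section~\ref{sec:known-estimates} from~\cite{Adams2024}, together with the resonance identity for the multiplier $\sum \xi_i^{2j} - (\sum \xi_i)^{2j}$ on the characteristic surface.

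For part~(1), I would split the nonlinearity into the cubic part ($k=1$) and the higher-order parts ($k \ge 2$). Cubic terms always have at least one derivative on the $\overline u$ factor because $c_{1,\alpha}=0$ when $\alpha_2=0$. Dualising to a trilinear $L^2_{tx}$ estimate and decomposing into Littlewood-Paley pieces $\Box_{n_1} \overline{u}$, $\Box_{n_2} u$, $\Box_{n_3} u$, I would exploit the resonance identity to extract a gain of $\sim (\max |n_i|)^{2j-1}$ away from resonances and use a bilinear Strichartz refinement on the two interacting high-frequency factors to absorb the remaining $2j-1$ derivatives on $u$, while the single derivative on $\overline u$ absorbs the loss one is allowed at the critical regularity $s = j/2$. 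The residual resonant configuration, which is the only dangerous one, is handled exactly as in the classical dNLS analysis of~\cite{AG2005,Takaoka1999}. For the higher-order $k\ge 2$ terms, the total number of derivatives $2j-k$ is strictly smaller than $2j-1$ and the number of factors is larger, so $L^\infty_t L^2_x$-type estimates, Hölder in modulation, and~\eqref{eq:modulation-leibniz} together with the embedding~\eqref{eq:modulation-sobolev} will be more than enough; the regularity threshold $s\ge j/2$ leaves ample room.

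For part~(2), the cubic interactions disappear, so the contraction is governed entirely by the smallest nonvanishing $(2k+1)$-linear term, which carries only $2j-k$ derivatives across $2k+1$ factors. Dualising as before, I would distribute the $2j-k$ derivatives among the $2k+1$ factors using a Leibniz rule in $X^{s,b}_{2,p}$ and then estimate via a $(2k+2)$-linear $L^2_{tx}$ inequality built from Strichartz and bilinear Strichartz for $\partial_t + (-1)^{j+1}\partial_x^{2j}$. Counting the derivatives one is permitted to lose per factor gives exactly the threshold $s > \1{2} + \1{4k} - \frac{2k+1}{2kp}$, where the term $\1{4k}$ records the worst case in which the $2j-k$ derivatives concentrate on a single factor and must be balanced against a bilinear gain, and the $\frac{2k+1}{2kp}$ term accounts for Sobolev-type trading between $\ell^p$ indices via~\eqref{eq:modulation-sobolev}.

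The main obstacle will be the cubic estimate in part~(1) at the critical regularity $s=j/2$, where every derivative on the worst-placed factor must be compensated. On modulation spaces this is more delicate than on Fourier–Lebesgue spaces because the uniform $\Box_n$ decomposition interacts poorly with the dispersive scaling; I expect to need to combine a decomposition into dyadic frequency blocks (to access bilinear Strichartz) with the native uniform decomposition (to preserve $\ell^p_n$), carefully summing so that no logarithmic losses degrade the endpoint. The $k\ge 2$ terms and part~(2) are genuinely easier and should follow from routine multilinear Hölder arguments once the functional setup is fixed, and the fixed-point contraction then yields the analytic solution map.
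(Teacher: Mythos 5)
Your overall framework (Bourgain spaces adapted to $M^s_{2,p}$, reduction to multilinear estimates, resonance relation plus dispersive smoothing) matches the paper's, but your plan has a genuine gap at the crux of part~(1): the cubic estimate at the endpoint $s=\frac{j}{2}$. You defer the resonant configuration to ``the classical dNLS analysis'' of~\cite{AG2005} and~\cite{Takaoka1999}, but those works treat only $j=1$; for $j\ge2$ the resonant trilinear interaction carries $2(j-1)$ additional derivatives and is not covered there. The paper's actual mechanism is a reduction to the NLS-hierarchy cubic estimate (Proposition~\ref{prop:nls-cubic-modulation-estimate}, i.e.\ \cite{Adams2024}*{Proposition~5.6}): whenever $|\xi_2|\JBX[\xi]\lesssim\JBX[\xi_1]\JBX[\xi_3]$ --- which contains the genuinely resonant constellations $\xi_1+\xi_2\approx0$ or $\xi_2+\xi_3\approx0$ --- one trades exactly half a derivative off the conjugated factor; this is where the hypothesis $c_{1,\alpha}=0$ for $\alpha_2=0$ enters and where the extra $\1{2}$ in $s\ge\frac{j-1}{2}+\1{2}$ comes from. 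Your proposed substitute, a bilinear Strichartz gain on ``the two interacting high-frequency factors'', degenerates precisely there: the bilinear symbol $|\xi_1+\xi_2|(|\xi_1|^{2j-2}+|\xi_2|^{2j-2})$ vanishes when $\xi_2\approx-\xi_1$, and Lemma~\ref{lem:modulation-estimate} yields no gain either. The paper uses the modulation/bilinear-operator machinery only in the complementary regime $\JBX[\xi]\JBX[\xi_2]\gg\JBX[\xi_1]\JBX[\xi_3]$, after passing to the $L^2$-based space and returning via~\eqref{eq:modulation-sobolev}. Without the reduction to Proposition~\ref{prop:nls-cubic-modulation-estimate} (or an independent proof of an equivalent resonant trilinear estimate in $X^p_{s,b}$) the endpoint $s=\frac{j}{2}$ does not close.

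Second, your treatment of the higher-order terms is too optimistic. For part~(1) you claim algebra/H\"older/Leibniz arguments suffice; but each factor sent into the algebra $M_{\infty,1}$ from $M^s_{2,p}$ costs roughly $1-\1{p}$ derivatives by~\eqref{eq:modulation-sobolev}, and for $2k$ factors this loss can exhaust the available slack (already for $k=2$ and $p>2$ the naive count fails at $s=\frac{j}{2}$). More importantly, for part~(2) the threshold $s>\1{2}+\1{4k}-\frac{2k+1}{2kp}$ lies far below the algebra range, and the paper's Proposition~\ref{prop:quintic-estimate-modulation} obtains it from genuinely dispersive input --- the dual and direct forms of Kato smoothing~\eqref{eq:kato2},~\eqref{eq:kato1} and the maximal-function estimate~\eqref{eq:max-fct} --- organised around a case distinction on whether the output frequency is comparable to the largest input frequency: the dual smoothing estimate for the product only helps in the first case, while in the second the two comparable high-frequency factors must each be placed in $L^\infty_xL^2_t$. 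Your sketch does not engage with this case distinction nor with how the multilinear estimate is actually assembled, so the claimed numerology for the threshold remains unverified; the heuristic attribution of the $\1{4k}$ term to ``derivatives concentrating on a single factor'' does not by itself constitute a proof.
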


\begin{remark}
    Theorem~\ref{thm:gauged-wellposed-modulation} has further extensions: besides the (also called) gauge-invariant (with respect to multiplication with a constant phase-factor $u \mapsto e^{i\theta}u$) distribution of complex conjugates in the nonlinear terms others are possible. Only for the cubic term $|u|^2u$ is the necessary distribution of complex conjugates with our arguments, ignoring derivatives. For the higher-order terms an arbitrary distribution of complex conjugates is possible.

    A similar, if weaker, statement regarding the arbitrariness of distribution of complex conjugates in the nonlinear terms is true of Theorem~\ref{thm:gauged-wellposed-hat}. For example, the proof of Proposition~\ref{prop:quintic-estimate-hat-endpoint} shows that the statement of the theorem still holds true, if only as few as two factors are complex conjugates in a quintic or higher-order nonlinear term.

    Though we do not pursue an in-depth showcasing of which distributions of complex conjugates are covered by our arguments, as the gauge-invariant (see above) nonlinearities are most canonical.
\end{remark}

We derive these theorems by means of proving multilinear estimates in $\hat X^r_{s,b}$ and $X^p_{x, b}$ spaces for the nonlinear terms in the equations. Definitions of their respective norms are given by $\norm{f}_{\hat X^r_{s,b}} = \norm{\JBX[\tau - \xi^{2j}]^b\JBX[\xi]^s\F_{x,t}f}_{L^{r'}_{xt}}$ and $\norm{f}_{X^p_{s,b}} = \norm{\JBX[n]^s \norm{\Box_n f}_{\hat X^2_{0, b}}}_{\ell^p_n(\Z)}$, where $(\Box_n)_{n\in\Z}$ is a fixed choice of uniform frequency decomposition operators. Properties of these function spaces were covered in~\cite{Adams2024}*{Section~1.2}. Combined with the contraction mapping principle such estimates lead to local well-posedness in Fourier-Lebesgue and modulation spaces respectively. Using such estimates to obtain local well-posedness results is a well-known technique initially investigated in~\cites{Bourgain1993-1,Bourgain1993-2}. We omit specific details of the connection between non- or multilinear estimates and well-posedness, but direct the uninitiated reader to~\cites{AGDiss,Grünrock2004} for an overview and necessary adaptations in order to deal with Fourier-Lebesgue and modulation spaces (rather than just Sobolev spaces).

In contrast with our well-posedness results given in the preceding theorems, we are also able to derive a number of ill-posedness results regarding the hierarchy equations in conjunction with the techniques that we are utilising. In particular the following two theorems show that no direct application of the contraction mapping theorem can lead to well-posedness for non-periodic initial data below the regularities at which we establish local well-posedness, i.e. our well-posedness results are optimal in this sense.

\begin{theorem}\label{thm:illposed-line-C3-hat}
    For any $j \ge 2$, $1 \le r \le \infty$ and $s < \1{2} + \frac{j-1}{r'}$ the flow map $S : \hat H^s_r(\R) \times (-T, T) \to \hat H^s_r(\R)$ of the Cauchy problem for the $j$th dNLS hierarchy equation cannot be thrice continuously differentiable.
\end{theorem}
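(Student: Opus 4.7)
The plan is to contradict the $C^3$ hypothesis by exhibiting a sequence of initial data on which the third Fréchet derivative of the flow at the origin blows up faster than the cube of the data norm. If $S$ is $C^3$, a standard Taylor expansion of Duhamel's formula identifies the third derivative at zero as the trilinear form
\begin{equation*}
T_t(\phi,\phi,\phi) = -6i\int_0^t U(t-s)\,F_3(U(s)\phi)\,\d{s},
\end{equation*}
where $U(s) = e^{-i(-1)^{j+1}s\partial_x^{2j}}$ is the linear propagator and $F_3$ is the cubic part of the $j$th dNLS hierarchy nonlinearity; the hypothesis forces $\sup_{|t|\le T}\norm{T_t(\phi,\phi,\phi)}_{\hat H^s_r}\lesssim \norm{\phi}_{\hat H^s_r}^3$.

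To violate this bound I would choose $\hat\phi_N(\xi) = \lambda^{-1/2}\chi_{[N, N+\lambda]}(\xi)$ with the critical scale $\lambda := N^{-(j-1)}$, for which $\norm{\phi_N}_{\hat H^s_r}\sim N^s\lambda^{1/r'-1/2}$. Taking the Fourier transform of $T_t(\phi_N,\phi_N,\phi_N)$ produces a three-fold integral over $\{\xi_1+\xi_2+\xi_3=\xi\}$ whose integrand combines a monomial $\prod_i(i\xi_i)^{\alpha_i}$ coming from the cubic nonlinearity, the amplitude $\hat\phi_N(\xi_1)\widehat{\overline{\phi_N}}(\xi_2)\hat\phi_N(\xi_3)$, and the time-oscillation factor $\int_0^t e^{isH}\,\d{s}$ with resonance function $H = \xi^{2j} - \xi_1^{2j} + \xi_2^{2j} - \xi_3^{2j}$. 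Writing $\xi_1 = N+a$, $\xi_2 = -(N+b)$, $\xi_3 = N+c$ with $a,b,c\in[0,\lambda]$ and expanding in $N$, the $N^{2j}$ and $N^{2j-1}$ contributions to $H$ cancel by parity, leaving $H = 2\binom{2j}{2}N^{2j-2}(c-b)(a-b) + O(N^{2j-3}\lambda^3)$. Thus the specific scale $\lambda = N^{-(j-1)}$ forces $|H|\lesssim 1$ \emph{uniformly} on the frequency support, and fixing $t_0\in(0,T)$ sufficiently small (independent of $N$) one obtains $\left|\int_0^{t_0}e^{isH}\,\d{s}\right|\gtrsim t_0$ with positive real part everywhere on the integration region.

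Each cubic monomial in the nonlinearity contributes a factor of modulus $\sim N^{2j-1}$ to the integrand at this frequency configuration. Appealing to Proposition~\ref{prop:dnls-coeff} for the coefficients of the `bad' cubic terms, together with elementary Leibniz bookkeeping for the outer $\partial_x$ applied to each cubic term of the hierarchy equation, one checks that the total summed coefficient at $(\xi_1,\xi_2,\xi_3)$ near $(N,-N,N)$ is non-zero. This yields the pointwise lower bound $|\widehat{T_{t_0}(\phi_N,\phi_N,\phi_N)}(\xi)|\gtrsim t_0 N^{2j-1}\lambda^{1/2}$ on a set of measure $\sim\lambda$ near $\xi = N$, hence
\begin{equation*}
\norm{T_{t_0}(\phi_N,\phi_N,\phi_N)}_{\hat H^s_r}\gtrsim t_0\, N^{s+2j-1}\lambda^{1/2+1/r'}.
\end{equation*}
Substituting $\lambda = N^{-(j-1)}$ and dividing by $\norm{\phi_N}_{\hat H^s_r}^3\sim N^{3s}\lambda^{3/r'-3/2}$ yields the crucial ratio $\gtrsim t_0\, N^{1-2s+2(j-1)/r'}$, which diverges as $N\to\infty$ precisely when $s < \tfrac12 + \tfrac{j-1}{r'}$, contradicting the assumed boundedness of the trilinear form.

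The principal obstacle is obtaining the \emph{lower} (rather than the routine upper) bound on the trilinear form: the uniform lower bound on the oscillatory factor is made automatic by the critical scale $\lambda = N^{-(j-1)}$ that pins $|H|$ to $O(1)$, but the non-vanishing of the summed cubic coefficient requires a careful combination of Proposition~\ref{prop:dnls-coeff} with the Leibniz expansion of the `good' cubic terms. In case of stubborn cancellations one can alternatively break the symmetry by superposing well-separated high-frequency packets so that only the bad cubic remains resonant.
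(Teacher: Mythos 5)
Your argument is correct and is essentially the paper's own proof: the same characteristic-function initial data concentrated at frequency $N$ on an interval of width $N^{-(j-1)}$, the same observation that this critical scale pins the resonance function to $O(1)$ so the Duhamel time integral contributes a factor $\sim t$, and the same final exponent count giving divergence like $N^{1-2s+2(j-1)/r'}$ exactly when $s < \frac{1}{2}+\frac{j-1}{r'}$. Your explicit attention to the non-vanishing of the summed cubic symbol near $(N,-N,N)$ is a point the paper treats only implicitly (it simply writes the symbol as $\xi^{2j-1}$), but this does not change the approach.
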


\begin{theorem}\label{thm:illposed-line-C3-modulation}
    For any $j \ge 2$, $1 \le p,q \le \infty$ and $s < \frac{j}{2}$ the flow map $S : M^s_{p,q}(\R) \times (-T, T) \to M^s_{p,q}(\R)$ of the Cauchy problem for the $j$th dNLS hierarchy equation cannot be thrice continuously differentiable.
\end{theorem}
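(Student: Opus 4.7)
The plan is a standard Bourgain-type ill-posedness argument via failure of the third Fréchet derivative of the solution map at the origin. Suppose $S(\cdot,t):M^s_{p,q}\to M^s_{p,q}$ is thrice continuously differentiable at $0$. Since the $j$th dNLS hierarchy nonlinearity has minimal degree three, the Picard expansion yields
\[
S(\phi,t) = U(t)\phi + A_3(\phi)(t) + O(\|\phi\|^5),
\quad
A_3(\phi)(t) = -i\int_0^t U(t-t')N_3(U(t')\phi)\,dt',
\]
where $U(t)$ has Fourier symbol $e^{-it\xi^{2j}}$ and $N_3$ is the cubic part of the nonlinearity. Continuity of $\partial_\phi^3 S$ at $0$ would force the trilinear estimate $\|A_3(\phi)(t)\|_{M^s_{p,q}}\lesssim\|\phi\|_{M^s_{p,q}}^3$ to hold for small $t>0$, which I aim to disprove whenever $s<j/2$.

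By Proposition~\ref{prop:dnls-coeff} applied at $n=2j-1$, $N_3$ contains the `bad' cubic $|u|^2\partial_x^{2j-1}u$ with non-zero coefficient (summing the $k=0$ and $k=n$ contributions in~\eqref{eq:dnls-coeff} yields the multiplicity $\binom{2j+1}{1}-1=2j$). The remaining cubic terms in $N_3$ carry at least one derivative on $\overline u$ and produce strictly lower-order contributions at the frequency constellation chosen below, so one may isolate the bad contribution for the purpose of obtaining a lower bound. As test data I take $\phi_N$ whose Fourier transform is a real smooth bump of height one supported in $[N,N+\delta]$, with $\delta = \delta(N,t)$ to be optimised. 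Only one isometric decomposition box is activated, giving $\|\phi_N\|_{M^s_{p,q}}\sim N^s\delta^{1/p'}$.

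The core of the argument is the resonance analysis on the Fourier side of $A_3(\phi_N)$. Writing $\xi_1=N+\eta_1$, $\xi_2=-N+\eta_2$, $\xi_3=N+\eta_3$ with $|\eta_i|\le\delta$ and using $\xi_1+\xi_2+\xi_3=\xi$, Taylor expansion of $\Omega=\xi_1^{2j}-\xi_2^{2j}+\xi_3^{2j}-\xi^{2j}$ shows that the orders $N^{2j}$ and $N^{2j-1}$ cancel identically, leaving
\[
\Omega = -2\binom{2j}{2}N^{2j-2}(\eta_1+\eta_2)(\eta_2+\eta_3) + O(N^{2j-3}\delta^3),
\]
so $|\Omega|\sim N^{2j-2}\delta^2$ on a dense open subset of the relevant cube. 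Choosing $\delta\sim(tN^{2j-2})^{-1/2}$ at the threshold between non-oscillatory and oscillatory regimes for the Duhamel factor $(e^{-it\Omega}-1)/(-i\Omega)$, and combining the derivative gain $|\xi_3|^{2j-1}\sim N^{2j-1}$, the two-dimensional constraint-integral volume $\sim\delta^2$, and the factor $\sim t$ from the Duhamel integral, one obtains the lower bound $|\widehat{A_3(\phi_N)}(\xi,t)|\gtrsim N$ on a $\xi$-set of measure $\sim\delta$ near $N$. Consequently $\|A_3(\phi_N)\|_{M^s_{p,q}}\gtrsim N^{s+1}\delta^{1/p'}$.

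Comparing with $\|\phi_N\|_{M^s_{p,q}}^3\sim N^{3s}\delta^{3/p'}$, the ratio $\sim N^{1-2s}\delta^{-2/p'}\sim t^{1/p-1}N^{1-2s+(2j-2)/p'}$ diverges as $N\to\infty$ exactly when $s<\tfrac{1}{2}+(j-1)/p'$. For $p\ge 2$ this threshold is at least $j/2$, so the claimed failure holds throughout $s<j/2$; for $1\le p<2$ the same recipe applied with a multi-frequency superposition of adjacent bumps (exploiting the $\ell^q$-accumulation in the output against that in the cube of the input) covers the remaining range. The principal obstacle is the resonance expansion: one must verify that the two leading Taylor orders vanish, that the $O(N^{2j-3}\delta^3)$ remainder does not dominate at the chosen $\delta$, and that neither the `good' cubic terms in $N_3$ nor higher-order Picard iterates accidentally cancel the lower bound on a set of positive measure.
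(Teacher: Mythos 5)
Your strategy is the paper's: differentiate the flow three times at the origin, test against a bump with Fourier support in a single interval of length $\delta=o(1)$ at frequency $N$, control the resonance function via the factorisation $|\Omega|\sim|\xi_1+\xi_2||\xi_2+\xi_3|N^{2j-2}$, and choose $\delta\sim(tN^{2j-2})^{-1/2}$ so that the interaction is fully non-oscillatory. For $2\le p\le\infty$ your computation is correct and closes the argument: the single bump forces divergence of the trilinear ratio for all $s<\frac{1}{2}+\frac{j-1}{p'}$, and since $p'\le 2$ this range contains $s<\frac{j}{2}$. The genuine gap is the range $1\le p<2$, which you flag but do not repair. There your own bound only rules out $C^3$ regularity for $s<\frac{1}{2}+\frac{j-1}{p'}$, which is \emph{strictly below} $\frac{j}{2}$, and the proposed fix --- a superposition of adjacent bumps exploiting $\ell^q$-accumulation --- goes in the wrong direction. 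If $\phi=\sum_{m=1}^M\phi_m$ with the $\phi_m$ in distinct boxes (or distinct, well-separated physical translates), the resonance analysis kills all off-diagonal triples, so the output norm gains only a factor $M^{1/q}$ (resp.\ $M^{1/p}$) while $\norm{\phi}^3$ gains $M^{3/q}$ (resp.\ $M^{3/p}$); the ratio is multiplied by $M^{-2/q}\le 1$ and the lower bound worsens. Superposition can never improve a lower bound for a trilinear map unless the off-diagonal interactions are arranged to contribute constructively, and you have not arranged that. The paper's own (omitted) proof rests on the assertion that for data and output supported in a single frequency interval of length $o(1)$ the exponents $p,q$ are irrelevant (citing the argument of Klaus); your computation shows that this is exactly the point that requires a non-trivial argument when $p<2$, so as written the proposal does not establish the theorem in that range.

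A secondary problem: your claim that the remaining cubic terms, carrying at least one derivative on $\overline{u}$, give ``strictly lower-order contributions'' is false. At the constellation $(\xi_1,\xi_2,\xi_3)\approx(N,-N,N)$ every cubic term with $2j-1$ total derivatives contributes at size $N^{2j-1}$ regardless of which factor carries them, so you cannot isolate the bad cubic by order of magnitude; one must either verify non-cancellation of the full cubic symbol, i.e.\ $\sum_\alpha c_{1,\alpha}(-1)^{\alpha_2}\neq 0$ (Proposition~\ref{prop:dnls-coeff} only determines the coefficients of the bad cubics), or argue as the paper does with the total-derivative structure and all cubic terms together. Two further small points: the Duhamel factor in your final ratio should carry $t^{1/p'}$ rather than $t^{1/p-1}$ (harmless at fixed $t$), and passing from a pointwise Fourier-side lower bound on a set of measure $\delta$ to a lower bound on $\norm{A_3(\phi_N)}_{L^p}$ cannot use Hausdorff--Young when $p<2$; you need the explicit profile of $\widehat{A_3(\phi_N)}$ as an essentially signed bump.
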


\begin{remark}\label{rem:C3-illposedness-line}
    The preceding two theorems are phrased for the dNLS hierarchy equations themselves. This turns out to be an unnecessary restriction though. As the proofs will show we are only concerned with cubic nonlinear terms and with that we may also ignore the distribution of derivatives within them. The latter stems from the fact that the ill-posedness result is derived from a high-high-high interaction between the three factors, so that derivatives may be shifted arbitrarily between factors anyway.

    Thus there is still a lot of leeway in phrasing the ill-posedness theorems for more general classes of equations. Since we have not defined a name for this explicit class we refrain from complicating the theorem by trying to be as general as possible in its phrasing. Suffice it to say that our ill-posedness theorems still hold, so long as a cubic nonlinear term (in an equation paralleling~\eqref{eq:general-cauchy}) with $2j-1$ derivatives placed upon it is present in the equation. In particular this also includes the class of gauged dNLS equations.
\end{remark}

Moving from the realm of non-periodic initial data to the periodic problem, we can be sure that no (direct) application of the contraction mapping theorem will lead to any positive results concerning the fourth-order hierarchy equation. Of course, this suggests that a similar result also holds for all higher-order equations. This would mean that merely the dNLS equation itself can be attacked using fixed-point techniques with periodic initial data.
\begin{theorem}\label{thm:illposed-torus-C3}
    For any $1 \le r \le \infty$ and $s \in \R$ the flow map $S : \hat H^s_r(\T) \times (-T, T) \to \hat H^s_r(\T)$ of the Cauchy problem for the fourth-order (i.e. $j=2$) dNLS hierarchy equation cannot be thrice continuously differentiable.
\end{theorem}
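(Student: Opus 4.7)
The plan is to argue by contradiction along the standard lines of the Bourgain counterexample strategy, adapted to exploit the trivial resonances present in the periodic setting. Assuming $S$ is thrice continuously differentiable, Duhamel's principle combined with the observation that the first two nonlinear variational derivatives vanish at zero gives the third Fréchet derivative of $S$ at $u_0 = 0$ in the explicit form
\[
A_3(\phi_1, \phi_2, \phi_3)(t) = -i \int_0^t U(t-s)\, \widetilde{N}_3\bigl(U(s)\phi_1, U(s)\phi_2, U(s)\phi_3\bigr)\, \d s,
\]
where $U(s) = e^{is \partial_x^4}$ denotes the $j=2$ linear propagator and $\widetilde{N}_3$ the symmetric trilinear extension of the cubic part of the nonlinearity in~\eqref{eq:dnls-hierarchy}. $C^3$-continuity of $S$ would then force $A_3 \colon \bigl(\hat H^s_r(\T)\bigr)^3 \to \hat H^s_r(\T)$ to be bounded uniformly for $t$ in a neighborhood of zero. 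My goal is to exhibit test data that violates this bound for every $s \in \R$ and every $r \in [1, \infty]$.

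The test data consists of single Fourier modes. I would set $\phi_1(x) = \phi_2(x) = e^{ix}$ and $\phi_3(x) = e^{iKx}$ for large $K \in \Z$. On the torus these lie in $\hat H^s_r(\T)$ with $\norm{\phi_1}_{\hat H^s_r} = \norm{\phi_2}_{\hat H^s_r} = 1$ and $\norm{\phi_3}_{\hat H^s_r} = \JBX[K]^s$, regardless of $s$ and $r$. The key observation is that the triple $(n_1, n_2, n_3) = (1, 1, K)$ produces the \emph{trivial resonance}: the output frequency is $n = n_1 - n_2 + n_3 = K$, and because $n_1 = n_2$ the dispersive phase vanishes identically, $\Omega = n_1^4 - n_2^4 + n_3^4 - n^4 = 0$. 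Consequently the Duhamel time-integral $\int_0^t e^{i\Omega s} \d s = t$, growing linearly in $t$ rather than giving a bounded oscillatory expression.

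Next I would compute the amplitude of $A_3$ at output mode $K$. Consider the cubic monomial $\partial_x[u \overline{u}(\partial_x^3 u)]$, arising from the $k = 3$ summand in~\eqref{eq:dnls-hierarchy} for $j=2$, with nonzero coefficient by Proposition~\ref{prop:dnls-coeff}. Expanding the $\varepsilon_1 \varepsilon_2 \varepsilon_3$-part of the cubic nonlinearity applied to $\varepsilon_1 U(s)\phi_1 + \varepsilon_2 U(s)\phi_2 + \varepsilon_3 U(s)\phi_3$ and keeping the partitions that place $\phi_3$ in the $\partial_x^3 u$ slot, the Fourier multiplier picks up a factor $n \cdot n_3^3 = K^4$ and the $s$-dependent phase cancels against $U(t-s)$, yielding an amplitude contribution of size $K^4 t$ at mode $K$. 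An analogous analysis for the $k = 0$ term $\partial_x[(\partial_x^3 u) \overline{u} u]$ (also with nonzero coefficient by Proposition~\ref{prop:dnls-coeff}) produces a second $K^4 t$ contribution of the same phase, while the $k = 1, 2$ terms and all cubics with a derivative falling on $\overline{u}$ contribute only $O(K^3)$, since $\overline{\phi}_1 = \overline{\phi}_2 = e^{-ix}$ provides no derivative enhancement. Summing these contributions gives a $K$-Fourier coefficient of $A_3(\phi_1, \phi_2, \phi_3)$ of size $\gtrsim K^4 t$, whence
\[
\norm{A_3(\phi_1, \phi_2, \phi_3)}_{\hat H^s_r(\T)} \gtrsim \JBX[K]^{s+4}\, t \quad\text{while}\quad \prod_{j=1}^{3} \norm{\phi_j}_{\hat H^s_r(\T)} = \JBX[K]^s,
\]
producing the quotient $\sim K^4 t$, unbounded as $K \to \infty$ for any fixed $t > 0$ and any $s \in \R$. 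This contradicts the required boundedness of $A_3$.

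The hard part will be verifying that the various cubic contributions in~\eqref{eq:dnls-hierarchy} do not conspire to cancel at the leading $K^4$ order; this is where Proposition~\ref{prop:dnls-coeff} is indispensable, as it shows that the coefficients of the ``bad'' cubic monomials $\partial_x[(\partial_x^{3-k}u)\overline{u}(\partial_x^k u)]$ are individually nonzero and share a common phase (up to the overall factor $-i$ they equal the positive integers $\binom{5}{k+1} - \delta_{0,k} - \delta_{3,k}$). A similar robustness against cancellation is implicit in Remark~\ref{rem:C3-illposedness-line}, meaning that the argument immediately extends to any equation of the form~\eqref{eq:general-cauchy} whose cubic part contains a monomial with three interior derivatives concentrated on the $u$-factors.
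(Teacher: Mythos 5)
Your proposal follows essentially the same route as the paper: Bourgain's $C^3$-argument applied to a trivially resonant high--low--low frequency constellation (your $(1,1,K)\to K$ is the paper's $(N,N_0,N_0)\to N$ constellation in polarized form), with the contradiction coming from the unbounded growth in $K$ of the resonant amplitude, which is $r$- and $s$-independent after dividing by the product of the data norms.

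There is, however, a concrete error in your amplitude computation. The monomial $\partial_x\bigl[u\overline{u}(\partial_x^3 u)\bigr]$ you invoke does not occur in the fourth-order equation: by~\eqref{eq:dnls-hierarchy} the cubic part corresponds to $k=1$, hence $|\alpha|=2j-k-1=2$ derivatives inside the outer $\partial_x$, so the cubic nonlinearity carries exactly $2j-1=3$ derivatives in total, and its symbol is homogeneous of degree $3$. (The ``$k=3$ summand'' of~\eqref{eq:dnls-hierarchy} for $j=2$ is the septic term $\partial_x(|u|^6u)$; the terms $(\partial_x^{3-k}u)\overline{u}(\partial_x^k u)$ of Proposition~\ref{prop:dnls-coeff} with $n=3$ are the \emph{expanded} form in which the outer derivative has already been distributed, so attaching a further $\partial_x$ double-counts.) Consequently the resonant amplitude is $\sim K^3 t$, not $K^4 t$. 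The quotient $K^3 t$ still diverges as $K\to\infty$, so your conclusion survives, but your claimed separation of a $K^4$ leading order from $O(K^3)$ remainders collapses: you must verify non-cancellation at order $K^3$ among \emph{all} cubic contributions. This is precisely what the paper's proof of Proposition~\ref{prop:illposed-torus-C3} supplies by writing out the full cubic symbol $n_3(k_1,k_2,k_3)=(k_1+k_2+k_3)(2k_1^2+k_2^2+2k_3^2+k_1k_2+k_2k_3+3k_1k_3)$ and evaluating it at the resonant constellations; at your constellation it is $\sim 2K^3\neq 0$. (In fact the order-$K^3$ part is clean: any derivative landing on a low-frequency factor costs a power of $K$, so only the $u\overline{u}\partial_x^3u$-type term, with coefficient $4$ by Proposition~\ref{prop:dnls-coeff}, contributes at top order, and no cancellation can occur.) With this correction your argument is a valid rendering of the paper's proof.
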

Weakening the regularity requirements for the initial data we are able to showcase that the situation regarding the regularity of the flow map is even worse. This then also generalises to an arbitrary higher-order hierarchy equation, strengthening our conviction that low-regularity well-posedness on the torus is out of reach for any of the hierarchy equations, except for dNLS itself.
\begin{theorem}\label{thm:illposed-torus-uniform}
    For any $j \ge 2$, $1 \le r \le \infty$ and $s < j - 1$ there exists a gauged dNLS equation (i.e. choice of coefficients $c_{k, \alpha}$) such that for the Cauchy problem~\eqref{eq:general-cauchy} the flow map $S : \hat H^s_r(\T) \times (-T, T) \to \hat H^s_r(\T)$ cannot be uniformly continuous on bounded subsets.
\end{theorem}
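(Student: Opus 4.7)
The plan is to run a Kenig--Ponce--Vega style argument on a specifically chosen gauged dNLS equation admitting explicit plane-wave solutions with amplitude-dependent dispersion relation. Concretely, for $j \ge 2$ I fix the coefficients so that the equation reads
\begin{equation*}
    i \partial_t u + (-1)^{j+1} \partial_x^{2j} u = c\, u \cdot (\partial_x \overline u) \cdot (\partial_x^{2j-2} u),
\end{equation*}
with purely imaginary $c = -iC$, $C \in \R \setminus \{0\}$. The multi-index $(\alpha_1, \alpha_2, \alpha_3) = (0, 1, 2j-2)$ satisfies $|\alpha| = 2j - 1 = 2j - k$ for $k = 1$ and $\alpha_2 = 1 \ge 1$, so this is a legitimate gauged dNLS equation in the sense of~\eqref{eq:gauged-dnls-eqn}.

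Inserting the ansatz $u(x,t) = A\, e^{i(Nx - \omega t)}$ I note that the three factors of the nonlinearity contribute Fourier modes $N$, $-N$ and $N$ respectively, summing back to $N$, so the ansatz is self-consistent. A short calculation involving $(iN)^{2j-2} = (-1)^{j-1} N^{2j-2}$ together with the purely imaginary choice of $c$ yields the real dispersion relation
\begin{equation*}
    \omega(A, N) = N^{2j} + \tilde C\, |A|^2\, N^{2j-1}, \qquad \tilde C := (-1)^j C \in \R\setminus\{0\}.
\end{equation*}
Hence $u_{A,N}(x,t) = A\, e^{i(Nx - \omega(A,N)t)}$ is an exact solution for every $A \in \C$ and $N \in \Z$.

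Finally, for sequences $N_n \to \infty$ and $\epsilon_n \in (0,1)$ to be specified I set $u_n(x) := N_n^{-s}\, e^{i N_n x}$ and $v_n(x) := (1+\epsilon_n) N_n^{-s}\, e^{i N_n x}$. Being supported on a single Fourier mode, these functions satisfy $\|u_n\|_{\hat H^s_r(\T)}, \|v_n\|_{\hat H^s_r(\T)} \sim 1$ and $\|u_n - v_n\|_{\hat H^s_r(\T)} = \epsilon_n$, uniformly in $r \in [1, \infty]$. The corresponding exact solutions dephase at rate
\begin{equation*}
    \omega_n - \tilde \omega_n = \tilde C\, N_n^{2j-1}\bigl(N_n^{-2s} - (1+\epsilon_n)^2 N_n^{-2s}\bigr) \sim \epsilon_n\, N_n^{2j-1-2s},
\end{equation*}
so at time $t_n := c_0 / (\omega_n - \tilde\omega_n)$ for a small absolute constant $c_0 > 0$ the two pure plane waves are out of phase by $c_0$, forcing $\|S(t_n) u_n - S(t_n) v_n\|_{\hat H^s_r(\T)} \gtrsim 1$. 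Choosing $\epsilon_n := N_n^{-(j-1-s)}$ achieves simultaneously $\epsilon_n \to 0$ and $\epsilon_n\, N_n^{2j-1-2s} = N_n^{j-s} \to \infty$ (so $t_n \to 0$) precisely under the hypothesis $s < j-1$, yielding the desired failure of uniform continuity on bounded subsets. The main obstacle is bookkeeping rather than anything substantive: one must confirm that the odd parity of the exponent $2j-1$ combined with the imaginary choice of $c$ produces a real $\tilde C$, so that plane waves retain constant modulus and the dephasing argument is not contaminated by spurious exponential growth or decay.
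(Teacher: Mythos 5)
Your argument is correct and is essentially the paper's own proof (Proposition~\ref{prop:illposed-torus-uniform}, following the Kenig--Ponce--Vega scheme): both select a single cubic gauged-dNLS nonlinearity admitting exact one-mode solutions $A e^{i(Nx-\omega(A,N)t)}$ with $\omega(A,N)=N^{2j}\pm\tilde C\,|A|^2N^{2j-1}$, and exploit the amplitude-dependent dephasing at rate $N^{2j-1-2s}$; you merely take the term $u(\partial_x\overline{u})(\partial_x^{2j-2}u)$ where the paper uses $iu^2\partial_x^{2j-1}\overline{u}$. The only difference worth noting is bookkeeping: fixing $t$ and taking $\epsilon_n\sim N_n^{-(2j-1-2s)}$, as the paper's proposition does, actually yields the stronger range $s<j-\tfrac{1}{2}$, whereas your choice $\epsilon_n=N_n^{-(j-1-s)}$ with $t_n\to0$ gives exactly the stated $s<j-1$.
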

The corresponding proof for our ill-posedness theorems on the torus consist of the Propositions~\ref{prop:illposed-torus-C3} and~\ref{prop:illposed-torus-uniform} respectively.

We point out that these ill-posedness results, seemingly only regarding Cauchy problems in Fourier-Lebesgue spaces, suffice to also rule out well-posedness in modulation spaces on the torus. As in this periodic geometry the two families of function spaces coincide.

\subsection{Global well-posedness for the dNLS hierarchy}

Unfortunately, in contrast with the situation for the NLS hierarchy equations, we do not have a family of conservation laws equivalent to $H^s$ norms for every $s > -\1{2}$, as were constructed in~\cite{Koch2018}, at our disposal. Hence, for the moment, we are only able to upgrade our local solution to dNLS hierarchy equations to global ones at integer regularity levels. This leads to a discrepancy of at most half a derivative (exactly for the odd-indexed dNLS hierarchy equations) between our local result and the corresponding global continuation result of the solution.

\begin{theorem}\label{thm:global-wellposed}
    Let $j \ge 2$. If the initial data $u_0 \in H^{\lceil \frac{j}{2} \rceil}(\R)$ has sufficiently small $L^2$ norm, the solution of the $j$th dNLS hierarchy equation, constructed in Theorem~\ref{thm:hierarchy-wellposed-hat-spaces}, extends globally in time. In other words, one has small mass global well-posedness of the $j$th hierarchy equation with initial data in $H^{\lceil \frac{j}{2} \rceil}(\R)$. The bound on the mass depends on $j$, but not on the size of the $H^{\lceil \frac{j}{2}\rceil}$-norm of the initial data.
\end{theorem}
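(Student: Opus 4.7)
The plan is to complement the local theory of Theorem~\ref{thm:hierarchy-wellposed-hat-spaces}, which (taking $r=2$ and $\hat H^s_2 = H^s$) already provides a local solution for any initial datum in $H^{\lceil j/2 \rceil}(\R)$ since $\lceil j/2 \rceil \ge j/2$, by a time-independent \emph{a~priori} bound on the $H^{\lceil j/2 \rceil}$-norm of the solution. As is standard, such a bound together with the blow-up alternative of the local theory extends the solution globally in time. Writing $k \coloneqq \lceil j/2 \rceil$, the bound will be obtained from two real conservation laws of the dNLS hierarchy: mass conservation (the quantity $I_0 \sim \norm{u}_{L^2}^2$) and conservation of $I_{2k}$ along the $j$th flow, the latter because in a completely integrable hierarchy all conserved densities are in pairwise involution.

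Next, using Lemma~\ref{lem:Yn-properties} and integrating by parts $k$ times, $I_{2k}$ (or whichever of $\operatorname{Re} I_{2k}$ or $\operatorname{Im} I_{2k}$ carries the leading piece) can be decomposed as
\[
E_k(u) = \gamma_k \norm{\partial_x^k u}_{L^2}^2 + R_k(u), \qquad \gamma_k \in \R \setminus \{0\},
\]
where the leading quadratic contribution stems from the single-factor term $-(2i)^{-2k}\partial_x^{2k}\overline{u}$ in $Y_{2k}$, and $R_k$ collects the finitely many remaining monomial integrals in $u$, $\overline{u}$ and their derivatives, each having $f \ge 4$ factors and total derivative count $d$ satisfying $2d+f = 4k+2$ by the same lemma. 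This last relation is exactly what forces each such monomial to carry the same scaling exponent as $\norm{\partial_x^k u}_{L^2}^2$ under the mass-invariant scaling $u \mapsto \lambda^{1/2} u(\lambda\cdot)$ from Remark~\ref{rem:criticality}.

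Applying Hölder's inequality combined with the Gagliardo-Nirenberg estimate~\eqref{eq:gagliardo-nirenberg}, the scaling match forces every such monomial to obey
\[
\abs{\int_\R \prod_{i=1}^f \partial_x^{\alpha_i} w_i \, \d{x}} \lesssim \norm{\partial_x^k u}_{L^2}^2 \norm{u}_{L^2}^{f-2}, \qquad w_i \in \set{u, \overline{u}}.
\]
Summing over the finitely many monomials in $R_k$ yields $\abs{R_k(u)} \le C_j P_j(\norm{u}_{L^2}) \norm{\partial_x^k u}_{L^2}^2$ for a polynomial $P_j$ with $P_j(0) = 0$. I would then fix $M_\star = M_\star(j)$ small enough that $C_j P_j(M_\star) \le |\gamma_k|/2$; under the smallness hypothesis $\norm{u_0}_{L^2} \le M_\star$ mass conservation keeps $\norm{u(t)}_{L^2} \le M_\star$ throughout the lifespan and therefore
\[
\tfrac{|\gamma_k|}{2} \norm{\partial_x^k u(t)}_{L^2}^2 \le \abs{E_k(u(t))} = \abs{E_k(u_0)},
\]
which gives the desired $H^{\lceil j/2 \rceil}$-bound uniformly in time, and hence global existence; note $M_\star$ depends on $j$ but not on $\norm{u_0}_{H^{\lceil j/2 \rceil}}$, as the statement requires.

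The step I expect to be the main obstacle is the rigorous propagation of conservation of $E_k$ at the mere regularity $H^k$: one should first verify $\{E_k, I_{2j-1}\} = 0$ for Schwartz initial data as a direct (if tedious) computation within the integrable hierarchy, and then transport the conservation to general $H^k$ data by approximation, leveraging the Lipschitz continuity of the solution map from Theorem~\ref{thm:hierarchy-wellposed-hat-spaces} on bounded sets together with the manifest continuity of the polynomial functional $E_k$ on $H^k$.
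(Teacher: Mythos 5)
Your proposal follows essentially the same route as the paper's proof: conservation of $I_{2k}$ along the $j$th flow by involution, isolation of the leading quadratic term $\gamma_k\norm{\partial_x^k u}_{L^2}^2$ coming from the single-factor piece of $Y_{2k}$, and control of the remainder via Hölder and Gagliardo--Nirenberg, where the homogeneity of $I_{2k}$ forces exactly the exponent $2$ on $\norm{u}_{\dot H^k}$ and the surplus onto the conserved $L^2$ norm, closed by the smallness of the mass. Your closing remark on rigorously propagating the conservation law at $H^k$ regularity by approximation is a reasonable extra care point that the paper does not spell out, but it does not alter the argument.
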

\begin{proof}
    We extend the previously constructed local solutions classically by utilising a-priori estimates that we derive from Hamiltonians $I_n$ as in~\eqref{eq:hamiltonian-recusion}. We remind the reader, that since we are dealing with a completely integrable hierarchy, the Hamiltonians of the hierarchy equations pairwise (Poisson) commute and are thus conserved along the flow of each other.
    The statement of this theorem holds true if we manage to derive an a-priori estimate on the $H^k$ norm of a solution (of an arbitrary dNLS hierarchy equation), for $k \in \N$.

    Guided by Lemma~\ref{lem:Yn-properties}, in order to derive an a-priori estimate on the level of $H^k$ we take a closer look at $I_{2k} = \int_\R uY_{2k}\d{x}$, where the `leading term' (up to constants) is given by $u \partial_x^{2k} \overline{u}$. By partial integration this term becomes equivalent to the homogeneous $\dot{H^k}$ norm. That the $L^2$ norm is conserved along the dNLS hierarchy equations' flows is well known. So what remains, until we may assert our desired a-priori bound on the $H^k$ norm, is to argue that the other terms in the Hamiltonian $I_{2k}$ cannot interfere with/cancel the leading term $|\partial_x^k u|^2$. That is, so far we have argued
    \begin{equation*}
        |I_{2k}| \gtrsim \norm{u}_{\dot{H}^k}^2 - |\text{higher order terms}|
    \end{equation*}
    and still need to ensure that the higher order terms can be controlled by a fraction (less than 1) of $\norm{u}_{\dot{H}^k}^2$.

    Take such a higher-order term of the Hamiltonian $I_{2k}$, which in general will be of the form $\prod_{i=0}^{m}(\partial_x^{\alpha_{2i}}u)(\partial_x^{\alpha_{2i+1}}\overline{u})$, for $\alpha \in \N_0^{2m+2}$ with $|\alpha| = 2k-m$ and $1 \le m \le 2k$. (In fact, from Lemma~\ref{lem:Yn-properties}, we know more about the structure: one of the factors $u$ will always be without a derivative placed upon it. But we ignore this additional bit of information at this point.) Since there are strictly less than $2k$ total derivatives, there will be at most a single factor that has more than $k$ derivatives placed upon it. Again, with partial integration, we may adjust such terms of the Hamiltonian so that every term has at most $k$ derivatives lying upon it, with at most a single one with exactly $k$ derivatives.

    Now we may apply Hölder's inequality to such higher-order terms in the Hamiltonian (with at most $k$ derivatives on any term) ensuring that, if there exists a factor $u$ with $k$ derivatives placed upon it, we put it in $L^2$. For the remaining factors with strictly fewer than $k$ derivatives it doesn't matter which $L^p$ they land in, so long as $p \ge 2$ (which is always possible, since we have $2m+2 \ge 4$ factors).

    We are now prepared to apply a special case of the Gagliardo-Nirenberg inequality~\eqref{eq:gagliardo-nirenberg}. In particular we will be choosing $p=q=2$, $\ell$ corresponds to the order of derivatives $\alpha_i$ placed on our factors and in our situation $n=1$ holds. This leads us to deriving $\theta = \frac{1}{2k} + \frac{\alpha_i}{k} - \frac{1}{rk}$. The inequality then reads
    \begin{equation*}
        \norm{\partial_x^{\alpha_i} u}_{L^r} \lesssim \norm{u}_{\dot{H}^{k}}^\theta \norm{u}_{L^2}^{1-\theta}.
    \end{equation*}
    Applying this inequality to every factor in a higher-order term $\prod_{i=0}^{2m+1} \norm{\partial_x^{\alpha_{i}}u}_{L^{p_i}}$ we are interested in the resulting exponent for $\norm{u}_{\dot{H}^{k}}$. We may calculate this as follows
    \begin{equation*}
        \sum_{i=0}^{2m+1} \frac{1}{2k} + \frac{\alpha_i}{k} - \frac{1}{p_i k} = \frac{2m+2}{2k} + \frac{|\alpha|}{k} - \1{k} = \frac{m}{k} + \frac{2k-m}{k} = 2,
    \end{equation*}
    where we have used the fact $\sum_{i=0}^{2m+1} \1{p_i} = 1$ and $|\alpha| = 2k-m$. For reasons of homogeneity we know the exponent of $\norm{u}_{L^2}$ must thus be $2m$.

    Hence at this point we have argued for an a-priori estimate of the form
    \begin{equation}
        I_{2k} \gtrsim \norm{u}_{H^k}^2(1 - c\norm{u}_{L^2}^{2m}) = \norm{u}_{H^k}^2(1 - c\norm{u_0}_{L^2}^{2m}) \gtrsim \norm{u}_{H^k}^2,
    \end{equation}
    where $c$ is a fixed constant, depending on the coefficients in the dNLS hierarchy equation (corresponding to the choice of $j \in \N$). The final inequality holds for a sufficiently small bound on the $L^2$ norm of the initial data. We have thus successfully argued for an a-priori estimate on the $H^k$ norm of solutions of dNLS hierarchy equations, conditioned on a sufficiently small initial mass.
\end{proof}

\subsection{Discussion}

Before moving on to proving our well- and ill-posedness results given in the previous subsection, we would like to discuss their merits and how they fit into the existing literature.

Let us begin by mentioning that our results show, that we have achieved optimal local well-posedness within the realm of our techniques, excluding the respective scaling critical Fourier-Lebesgue and modulation spaces. Specifically Theorems~\ref{thm:illposed-line-C3-hat} and~\ref{thm:illposed-line-C3-modulation} rule out the possibility of using fixed-point methods to improve upon the well-posedness theory of the dNLS hierarchy equations beyond what we have achieved. This does not preclude the possibility of using, say, the complete integrability of those equations to lower the regularity threshold on initial data while still achieving local well-posedness. As was already implemented for the dNLS equation itself~\cites{MR4565673,MR4628747} and recently the KdV hierarchy equations~\cite{KochKlaus2023}. Though this approach comes with the usual caveat that the flow will be rather irregular, i.e. merely continuous, rather than Lipschitz as in our Theorems~\ref{thm:hierarchy-wellposed-hat-spaces} and~\ref{thm:hierarchy-wellposed-modulation-spaces}.

On the front of global well-posedness we were able to exploit the Hamiltonians that are conserved along the flow of dNLS hierarchy equations in order to extend our local solutions globally, at least for initial data in Sobolev spaces at integer regularities. This leaves a gap of at most half a derivative between our local and global results. It seems likely that with an application of the (first-generation) I-method it would be possible to close this gap. More generally, extending solutions globally off the scale of Sobolev spaces (i.e. Fourier-Lebesgue or modulation spaces)  presents an interesting problem for further research.

We mention at this point that our local theory extends the previous best result concerning the fourth-order dNLS hierarchy equation from~\cite{Ikeda2021}, lifting the necessity of small data. With Theorem~\ref{thm:global-wellposed} we extend these local solutions globally. One point of interest is, that the authors of~\cite{Ikeda2021} manage to achieve their result without the use of any gauge-transformation. This seems to stem from their ability to exploit the special position of one of the derivatives in the nonlinearity $\partial_x$ being in front of every product term. See also~\eqref{eq:dnls-hierarchy} where we have also mentioned this fact. Further research into this possible exploitation may lead to subsequent further improvement of the regularity of the flow (of dNLS hierarchy equations), if one can do without the gauge-transformation.

The worsening of the lower bound for well-posedness by half a derivative in Sobolev spaces with every step up in the dNLS hierarchy ($j \rightarrow j+1$) is consistent with what can be observed for the similar situations of the NLS~\cite{Adams2024} or mKdV hierarchy~\cite{AGTowers}.

As is unsurprising, considering the ill-posedness results already for the NLS hierarchy on the torus~\cite{Adams2024}, the situation for low-regularity well-posedness theory of dNLS hierarchy equations on the torus is dire. One must hope that renormalisation/Wick-ordering or methods of complete integrability can be used in order to achieve any kind of result in this setting.

Regarding ill-posedness for the nonperiodic setting it has turned out to be surprisingly more difficult to achieve a general $C^0_\text{unif}$ ill-posedness result for the dNLS hierarchy compared with either the NLS or mKdV hierarchy. Explicit soliton solutions for dNLS, which were used in~\cite{BiagioniLinares2001} to show the failure of uniform continuity of the flow, are already very delicately constructed functions (evident from the complex choice of coefficients involved). Searching the literature for soliton solutions of higher-order dNLS hierarchy equations yielded only~\cite{MR4348344}, which due to their evidently even more complex structure and little resemblance to the solitons of dNLS suggest that this is a difficult problem to solve in full generality.

We end this subsection by mentioning that, to the author's best knowledge, we are also the first to achieve insight into the structure of coefficients in nonlinear terms in hierarchy equations stemming from completely integrable systems, beyond knowledge of a finite number of hierarchy equations. In particular referring to Proposition~\ref{prop:dnls-coeff}, where we derived a closed form expression for the coefficients of certain nonlinear terms appearing in the dNLS hierarchy equations. Extending such results to the rest of the nonlinear terms, or more generally other hierarchies, is of great interest. This would enable more delicate analysis regarding if the complete integrability structure of the equations has significant influence on the optimal well-posedness results that can be achieved with fixed point methods.


\section{Known Estimates}\label{sec:known-estimates}

In order to derive our well-posedness theorems, see Section~\ref{sec:statement-of-results}, we rely on proving multilinear estimates within the framework of Bourgain spaces that lead to well-posedness. To aid us in proving these multilinear estimates we will make heavy use of linear and bilinear smoothing estimates that were derived by the author in the context of the NLS hierarchy equations~\cite{Adams2024}. The multilinear $\hat X^r_{s,b}$ and $X_{s,b}^p$ estimates that lead to well-posedness in~\cite{Adams2024} will also be of use.

We cite the necessary estimates in the following subsection for the reader's convenience and to keep this work more self-contained.

\subsection{Smoothing and multilinear estimates}

To keep in line with how the estimates are stated in~\cite{Adams2024} we introduce the following notational convenience in this subsection: $u$, $v$ and $w$ will refer to functions in appropriate variants of Bourgain spaces adapted to a particular (linear part of an) equation and data spaces at hand so that the right hand side of the respective estimates remain finite. Keeping with the variable choice of the preceding sections $2j$, for $j \in \N$, will be the power in the phase function of the linear equation with which the estimates are associated.

We begin by stating linear estimates based on Kato smoothing and a maximal-function estimate.
\begin{proposition}[\cite{Adams2024}*{Proposition~4.1}]
    Let $b > \1{2}$, then the following inequalities hold
    \begin{enumerate}
        \item for $2 \le q \le \infty$ and $\sigma > \1{2} - \frac{2j}{q}$
        \begin{equation}\label{eq:kato1}
        \norm{u}_{L^\infty_xL^q_t} \lesssim \norm{u}_{X_{\sigma, b}}
        \end{equation}
        \item for $2 \le p \le \infty$ and $\sigma = - \frac{2j - 1}{2}(1 - \frac{2}{p})$
        \begin{equation}\label{eq:kato2}
        \norm{u}_{L^p_xL^2_t} \lesssim \norm{u}_{X_{\sigma, b}}
        \end{equation}
        \item for $4 \le p \le \infty$ and $\sigma >  \1{2} - \1{p}$
        \begin{equation}\label{eq:max-fct}
        \norm{u}_{L^p_xL^\infty_t} \lesssim \norm{u}_{X_{\sigma, b}}
        \end{equation}
    \end{enumerate}
\end{proposition}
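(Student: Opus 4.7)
The plan is the standard two-step approach for smoothing estimates in Bourgain spaces. First I would prove the three bounds at the level of the free propagator $U(t) \coloneqq e^{it(-1)^{j+1}\partial_x^{2j}}$ acting on data in $H^\sigma(\R)$, and then transfer to $X_{\sigma,b}$ via the standard argument: since $b > \1{2}$, any $u \in X_{\sigma,b}$ can be written as a weighted superposition of modulated free solutions $e^{it\tau}U(t)f_\tau$ (with $f_\tau$ depending on the spacetime Fourier transform of $u$), and the resulting $\tau$-integral is handled by Cauchy-Schwarz using the weight $\JBX[\tau-\xi^{2j}]^{-b}$. This reduces everything to fixed-group inequalities $\norm{U(\cdot)u_0}_Y \lesssim \norm{u_0}_{H^\sigma}$ for the three spacetime spaces $Y$.

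For (ii), the cornerstone is the sharp Kato smoothing at $p = \infty$. After a Littlewood-Paley localisation to $\abs{\xi}\sim N$, the change of variables $\eta = \xi^{2j}$ on each half-line where it is monotone produces a Jacobian of size $\abs{\xi}^{-(2j-1)}$, and Plancherel in $t$ yields $\norm{D^{(2j-1)/2}U(\cdot)u_0}_{L^\infty_x L^2_t} \lesssim \norm{u_0}_{L^2}$. The opposite endpoint $p = 2$ is Plancherel in $(x,t)$ with $\sigma = 0$. Real interpolation between the two endpoints yields the claimed scaling relation $\sigma = -\frac{2j-1}{2}\paren{1-\frac{2}{p}}$ on the full range $2 \le p \le \infty$. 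Estimate (i) is then obtained by combining (ii) at $p = \infty$ (with an $\eps$-loss, so that the sharp equality becomes $\sigma > \1{2}-j$) with the one-dimensional Sobolev embedding $H^{\1{2}+\eps}(\R_x)\hookrightarrow L^\infty(\R_x)$ (handling $q = \infty$ with $\sigma > \1{2}$) and interpolating in $q$.

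The genuinely dispersive piece is (iii), the maximal function estimate. The endpoint $p = \infty$ is immediate from Sobolev embedding in $x$ uniformly in $t$. At $p = 4$, after a dyadic frequency localisation I would represent $P_N U(t)u_0$ as an oscillatory integral with phase $t\xi^{2j}+x\xi$; the stationary condition $x = -2jt\,\xi^{2j-1}$ together with the non-degeneracy $\partial_\xi^2(\xi^{2j}) = 2j(2j-1)\xi^{2j-2}$ on each dyadic shell feeds into a stationary phase / $TT^\ast$ argument of Kenig-Ponce-Vega type. This produces a loss of $N^{\1{2}-\1{4}}$ per dyadic piece, summable exactly when $\sigma > \1{4} = \1{2}-\1{4}$. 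Interpolation between $p=4$ and $p=\infty$ fills in $4 \le p \le \infty$.

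The one delicate step is the $L^4_xL^\infty_t$ maximal function bound in (iii): it is the only place where one genuinely exploits dispersion via stationary phase for the higher-order phase $\xi^{2j}$, rather than bare Plancherel or Sobolev embedding. Everything else — the dyadic summation, the interpolation between endpoints, and the passage from the free propagator to the Bourgain norm — is routine once the $b > \1{2}$ transfer principle is in place.
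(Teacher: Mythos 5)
This proposition is not proved in the paper at all: it is imported verbatim from \cite{Adams2024}*{Proposition~4.1}, so there is no in-paper argument to compare against. Your outline is the standard and correct route to these estimates --- the $b>\tfrac12$ transfer principle reducing to free-propagator bounds, sharp Kato smoothing via the change of variables $\eta=\xi^{2j}$ and Plancherel in $t$ for (ii), Sobolev embedding plus interpolation for (i), and the Kenig--Ponce--Vega $L^4_xL^\infty_t$ maximal function estimate interpolated with the trivial $p=\infty$ endpoint for (iii); the exponent bookkeeping you give ($\sigma=-\tfrac{2j-1}{2}(1-\tfrac2p)$, $\sigma>\tfrac12-\tfrac{2j}{q}$, $\sigma>\tfrac12-\tfrac1p$) all checks out. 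The only point worth making explicit is that the proposition is stated with the inhomogeneous weight $\JBX[\xi]^\sigma$ while the sharp Kato gain in (ii) is naturally homogeneous, so the dyadic blocks with $\abs{\xi}\lesssim 1$ need the separate (trivial, Bernstein-type) treatment that your Littlewood--Paley setup implicitly accommodates.
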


In addition we will be making use of a Strichartz-type estimate that is more adapted (and thus more useful) to our Fourier-Lebesgue space setting, referred to most often in the literature as a Fefferman-Stein estimate. In the $L^2$-based setting it reduces to the well-known $L^6$-Strichartz estimate for (higher-order) Schrödinger equations.
\begin{proposition}[\cite{Adams2024}*{Corollary~4.6}]
    Let $j \ge 1$, $0 \le \1{r} < \frac{3}{4}$ and $b > \1{r}$, then one has the estimate
    \begin{equation}\label{eq:fefferman-stein}
        \norm{I^{\frac{2(j-1)}{3r}}u}_{L^{3r}_{xt}} \lesssim \norm{u}_{\hat X^r_{0,b}}.
    \end{equation}
\end{proposition}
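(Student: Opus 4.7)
The plan is to reduce the Bourgain-norm estimate to a linear Fefferman--Stein extension estimate for the free higher-order Schrödinger propagator $U_{2j}(t) = \F_x^{-1} e^{it\xi^{2j}} \F_x$, and then transfer it to $\hat X^r_{0,b}$ via the standard superposition argument.

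For the transference, I would write a general $u$ as a superposition of modulated free solutions,
\begin{equation*}
u(x,t) = \int_\R e^{it\sigma} \bigl(U_{2j}(t) g_\sigma\bigr)(x)\,d\sigma, \qquad \widehat{g_\sigma}(\xi) = \F_{x,t}u(\xi,\xi^{2j}+\sigma).
\end{equation*}
Assuming the linear bound
$\|I^{2(j-1)/(3r)} U_{2j}(t) f\|_{L^{3r}_{xt}} \lesssim \|\hat f\|_{L^{r'}_\xi}$,
Minkowski's inequality in $\sigma$ followed by Hölder's inequality against $\JBX[\sigma]^{\pm b}$ yields
\begin{equation*}
\|I^{2(j-1)/(3r)} u\|_{L^{3r}_{xt}} \lesssim \|\JBX[\sigma]^{-b}\|_{L^r_\sigma}\,\|u\|_{\hat X^r_{0,b}},
\end{equation*}
after a Fubini step, since both outer norms are $L^{r'}$. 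The weight integral is finite precisely when $b > 1/r$, matching the hypothesis.

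The linear estimate is scale invariant: under $(x,t)\mapsto(\lambda x,\lambda^{2j}t)$ the weight $\alpha := 2(j-1)/(3r)$ is exactly what is needed to balance both sides. After dyadically decomposing $f = \sum_N P_N f$, using Littlewood--Paley orthogonality in $L^{3r}_{xt}$ (valid for $3r \in (1,\infty)$) together with a parabolic rescaling $(\xi,x,t)\mapsto(N\xi,x/N,t/N^{2j})$, each frequency piece reduces to the unit-frequency statement
\begin{equation*}
\Bigl\|\int_{|\eta|\sim 1} e^{i(x\eta+t\eta^{2j})} g(\eta)\,d\eta\Bigr\|_{L^{3r}_{xt}} \lesssim \|g\|_{L^{r'}_\eta},
\end{equation*}
and the collected rescaling factors produce exactly the weight $I^\alpha$. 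The unit-frequency estimate is the classical Fefferman--Stein adjoint-restriction estimate for the non-degenerate curve $\tau=\eta^{2j}$ on $|\eta|\sim 1$: at $r'=2$ (i.e.\ $3r=6$) it is the $TT^*$ form of the $L^6$-Strichartz estimate for $U_{2j}$ localized to unit frequency, and the remaining range $r' \in [1, 4)$ (i.e.\ $1/r < 3/4$) is obtained by computing $\||Eg|^2\|_{L^{3r/2}_{xt}}$ via Plancherel/Hausdorff--Young after a change of variables $(u,v) = (\eta-\zeta,\,\eta^{2j} - \zeta^{2j})$, whose Jacobian is bounded uniformly by the non-vanishing curvature on $|\eta|,|\zeta|\sim 1$; interpolation then fills in the intermediate exponents.

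The main technical obstacle is the interplay with the degeneracy of $\tau=\xi^{2j}$ at $\xi=0$: for $j\ge 2$ the curvature vanishes there to order $2j-2$, which would destroy any uniform Fefferman--Stein statement taken across all frequencies. The Riesz-type factor $|\xi|^{2(j-1)/(3r)}$ encoded in $I^{2(j-1)/(3r)}$ is precisely calibrated to this degeneracy, ensuring a convergent geometric series in the Littlewood--Paley reconstruction across dyadic $N \le 1$. Tracking that the endpoint range $1/r < 3/4$ is inherited unchanged from the unit-scale non-degenerate Fefferman--Stein statement is the main bookkeeping step of the argument.
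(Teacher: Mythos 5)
The paper itself contains no proof of this proposition: it is quoted verbatim from \cite{Adams2024}*{Corollary~4.6} and recalled only for the reader's convenience, so the comparison is against the standard derivation in that reference and in the Fourier--Lebesgue literature it builds on. Measured against that, your overall architecture is the right one. The superposition/transference step reducing the $\hat X^r_{0,b}$ bound to the free-solution estimate is correct as written (H\"older in $\sigma$ against $\JBX[\sigma]^{-b}\in L^r_\sigma$ requires exactly $b>\1{r}$), and the Zygmund-type squaring argument, i.e.\ writing $\norm{I^\alpha U_{2j}(t)f}_{L^{3r}_{xt}}^2=\norm{(I^\alpha U_{2j}(t)f)^2}_{L^{3r/2}_{xt}}$ and applying Hausdorff--Young together with the change of variables $(\eta,\zeta)\mapsto(\eta+\zeta,\eta^{2j}+\zeta^{2j})$, is the correct engine, with the restriction $\1{r}<\frac34$ coming from integrability of the resulting diagonal singularity.

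The genuine gap is in your globalization. You prove the extension estimate only at unit frequency and reassemble via Littlewood--Paley plus rescaling. Square-function orthogonality in $L^{3r}_{xt}$ followed by Minkowski gives at best $\norm{\sum_N F_N}_{L^{3r}}\lesssim(\sum_N\norm{F_N}_{L^{3r}}^2)^{1/2}$, so you are left needing $(\sum_N\norm{\widehat{P_Nf}}_{L^{r'}}^2)^{1/2}\lesssim\norm{\hat f}_{L^{r'}}$. Since the blocks are disjoint, the right-hand side is the $\ell^{r'}$ sum of the same block norms, and $\ell^{r'}\hookrightarrow\ell^2$ only for $r'\le2$. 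For $2<r'<4$, i.e.\ $\frac43<r<2$ --- a portion of the claimed range that is actually used downstream, e.g.\ in Proposition~\ref{prop:quintic-estimate-hat-endpoint} where the estimate is invoked with parameter $\frac{4r}{3}$ near $\frac43$ --- the reassembly loses an unbounded factor in the number of occupied dyadic blocks, and interpolating between $r'=1$ and $r'=2$ cannot reach $r'>2$ either. Relatedly, your remark that the Jacobian is `bounded uniformly' on $|\eta|,|\zeta|\sim1$ conceals that $2j|\eta^{2j-1}-\zeta^{2j-1}|$ vanishes on the diagonal; that vanishing is precisely what forces $3r>4$. The repair is to run the squaring argument globally, with no frequency localization: using $|\eta^{2j-1}-\zeta^{2j-1}|\sim|\eta-\zeta|(|\eta|+|\zeta|)^{2j-2}$, the weight $(|\eta||\zeta|)^{2(j-1)/(3r)}$ carried by $I^{2(j-1)/(3r)}$ compensates the low-frequency degeneracy of the Jacobian directly inside the Hausdorff--Young/H\"older step, so no dyadic summation is ever needed.
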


Moving on, we may now recall the pair of bilinear operators introduced in~\cite{Adams2024}: For $j \in \N$ and $1 \le p \le \infty$ define $I_{p, j}^{\pm}$ by its Fourier transform
\begin{equation}
\F_x I_{p,j}^\pm(f,g)(\xi) = c \int_* k_j^\pm(\xi_1, \xi_2)^\1{p} \hat{f}(\xi_1)\hat{g}(\xi_2) \d{\xi_1}
\end{equation}
where the symbol is given by
\begin{equation}
k_j^\pm(\xi_1,\xi_2) = |\xi_1 \pm \xi_2|(|\xi_1|^{2j-2} + |\xi_2|^{2j-2}).
\end{equation}
We may now state the $X_{s,b}$ variant of a bilinear estimate involving our bilinear operator(s). To state the proposition we make use of the Fourier-Lebesgue space norms $\norm{f}_{\widehat{L^p}} = \norm{\hat{f}}_{L^{p'}}$.
\begin{proposition}[\cite{Adams2024}*{Corollary~4.3}]
    Let $1 \le q \le r_{1,2} \le p < \infty$ and $b_i > \1{r_i}$. Then we have
    \begin{equation}\label{eq:bilin-est}
    \norm{I_{p,j}^\pm(u,v_\pm)}_{\widehat{L^q_x}\widehat{L^p_t}} \lesssim \norm{u}_{\hat X_{0,b_1}^{r_1}} \norm{v}_{\hat X_{0,b_2}^{r_2}}
    \end{equation}
    where $v_+ = \overline{v}$ and $v_- = v$.
\end{proposition}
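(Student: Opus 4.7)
The plan is to pass to Fourier variables, apply Hölder in the time-frequency to peel off the $\hat X^{r_i}_{0,b_i}$-weights, and then reduce the remaining purely spatial bilinear estimate to a Hausdorff--Young inequality via a change of variables which absorbs the weight $k_j^\pm$ into a Jacobian.

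By Plancherel and the definitions of the Fourier-Lebesgue norms, the inequality is equivalent to
\begin{equation*}
    \bignorm{\int_* k_j^\pm(\xi_1,\xi_2)^{1/p}\JBX[\sigma_1]^{-b_1}\JBX[\sigma_2]^{-b_2}F_1(\xi_1,\tau_1)F_2(\xi_2,\tau_2)}_{L^{q'}_\xi L^{p'}_\tau} \lesssim \norm{F_1}_{L^{r_1'}}\norm{F_2}_{L^{r_2'}},
\end{equation*}
where $F_i := \JBX[\sigma_i]^{b_i}\widehat{u_i}$, $\sigma_i = \tau_i-\xi_i^{2j}$, and the inner integral runs over $\xi_1\pm\xi_2=\xi$ and $\tau_1+\tau_2=\tau$. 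I would then apply Hölder's inequality in $\tau_1$ (at fixed $\xi,\tau,\xi_1$) so as to pair the two modulation weights in an $L^a_{\tau_1}$ factor against $F_1F_2$ in $L^{a'}_{\tau_1}$. The hypothesis $b_i>\1{r_i}$ ensures that a suitable choice of $a$ makes the modulation factor bounded uniformly in $(\xi,\tau,\xi_1)$ by the standard modulation-integral lemma, leaving a purely spatial bilinear form (possibly after a preliminary Young's inequality in $\tau$ to distribute the remaining $\tau$-integrals).

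What remains is then an inequality of the schematic form
\begin{equation*}
    \bignorm{\int_{\xi_1\pm\xi_2=\xi} k_j^\pm(\xi_1,\xi_2)^{1/p}G_1(\xi_1)G_2(\xi_2)\d{\xi_1}}_{L^{q'}_\xi} \lesssim \norm{G_1}_{L^{r_1'}}\norm{G_2}_{L^{r_2'}}.
\end{equation*}
For this I would introduce the change of variables $(\xi_1,\xi_2)\mapsto(\xi,\eta)$ with $\eta:=\xi_1^{2j}\pm\xi_2^{2j}$, whose Jacobian equals, up to a constant, $|\xi_1^{2j-1}\mp\xi_2^{2j-1}|$. Factoring explicitly,
\[
    \xi_1^{2j-1}\mp\xi_2^{2j-1}=(\xi_1\mp\xi_2)\sum_{\ell=0}^{2j-2}\xi_1^{2j-2-\ell}(\pm\xi_2)^{\ell},
\]
one sees that on non-degenerate frequency regions this is pointwise comparable to $k_j^\pm(\xi_1,\xi_2)$. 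Consequently, pulling the weight $k_j^{\pm,1/p}$ through the substitution absorbs it against the Jacobian with exactly the right power, and the resulting multiplier-free bilinear inequality follows from Hausdorff--Young together with the hypothesis $q\le r_{1,2}\le p<\infty$.

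The main obstacle will be the degenerate frequency regions where the polynomial $\xi_1^{2j-1}\mp\xi_2^{2j-1}$ vanishes (for instance $\xi_1\sim\pm\xi_2$ in the respective cases), since there the change of variables fails to be a local diffeomorphism and the comparability with $k_j^\pm$ breaks down. The standard remedy is a dyadic decomposition $|\xi_i|\sim N_i$ and a further splitting into the regimes $N_1\sim N_2$ versus $N_1\not\sim N_2$: in each region the required lower bound on the Jacobian and on $k_j^\pm$ can be read off from the explicit factorisation above, and the resulting dyadic contributions reassembled by orthogonality. A secondary bookkeeping task is to choose the intermediate Hölder exponent $a$ so that the various inequalities close consistently under the numerology $q\le r_{1,2}\le p$ and $b_i>\1{r_i}$.
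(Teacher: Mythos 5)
This proposition is not proved in the present paper; it is quoted verbatim from \cite{Adams2024}*{Corollary~4.3}, so your attempt can only be judged on its own merits. Your opening moves (Plancherel, peeling off the modulation weights via H\"older in $\tau_1$ using $b_i>\1{r_i}$, which is the transfer principle done by hand) and your identification of the key change of variables $\eta=\xi_1^{2j}\pm\xi_2^{2j}$ with Jacobian comparable to $k_j^\pm$ are the right ingredients. But there is a genuine gap at the pivot of the argument: the reduction to the ``purely spatial'' bilinear inequality
\begin{equation*}
	\bignorm{\int_{\xi_1\pm\xi_2=\xi} k_j^\pm(\xi_1,\xi_2)^{1/p}G_1(\xi_1)G_2(\xi_2)\d{\xi_1}}_{L^{q'}_\xi} \lesssim \norm{G_1}_{L^{r_1'}}\norm{G_2}_{L^{r_2'}}
\end{equation*}
is false. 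Test it with $G_1=\chi_{[N,N+1]}$ and $G_2$ a characteristic function of a unit interval near $2N$: the right-hand side is $O(1)$, while on an interval of $\xi$ of unit length the left-hand side is $\gtrsim N^{(2j-1)/p}$, since $k_j^\pm\sim N^{2j-1}$ there and the convolution of the two bumps is of size one. No change of variables can repair this, because at fixed $\xi$ the substitution $\xi_1\mapsto\eta$ merely reparametrises the one-dimensional integral ($\int k^{1/p}G_1G_2\d{\xi_1}=\int k^{1/p-1}G_1G_2\d{\eta}$ is an identity) and produces no cancellation unless there is an outer norm in the $\eta$-variable to exploit.

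The point you have discarded is that $\eta$ \emph{is} the time frequency: the space-time Fourier transform of a product of two (modulated) free solutions is supported where $\tau=\xi_1^{2j}\pm\xi_2^{2j}$, and the correct order of operations is to solve this resonance equation for $\xi_1$ at fixed $(\xi,\tau)$, obtaining $\F_{x,t}I^\pm_{p,j}(u,v_\pm)(\xi,\tau)\sim\sum_{\text{roots}}k^{1/p-1}\hat f(\xi_1)\hat g(\xi_2)$, and only then take the $L^{p'}_\tau$ norm, changing variables back from $\tau$ to $\xi_1$ inside that norm; the factor $\d{\tau}\sim k\d{\xi_1}$ then combines with $k^{(1/p-1)p'}$ to give exponent $p'/p-p'+1=0$, i.e.\ exact cancellation of the weight, after which Young's inequality in $\xi$ closes the estimate. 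So you must keep the $\tau$-structure until after the change of variables, not decouple it first. A secondary remark: your worry about degenerate regions is largely unfounded, since for the odd exponent $2j-1$ the comparability $|\xi_1^{2j-1}\mp\xi_2^{2j-1}|\sim|\xi_1\mp\xi_2|\,(|\xi_1|^{2j-2}+|\xi_2|^{2j-2})=k_j^\pm(\xi_1,\xi_2)$ holds globally (both sides vanish simultaneously and to the same order); what does require minor care is only the finite multiplicity of roots $\xi_1(\xi,\tau)$ of the resonance equation.
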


Interpreting this bilinear operator as a multiplication operator we may find its adjoint (see~\cite{Adams2024}*{Section~4.2} for details) and gain an additional set of bilinear estimates associated with the adjoint. Let $I_{p,j}^{\pm,*}$ denote this adjoint. It has the symbol
\begin{align}
&k_j^{+,*}(\xi_1, \xi_2) = |\xi_1|(|\xi_1|^{2j-2} + |\xi_2|^{2j-2}), \quad\text{or}\\ &k_j^{-,*}(\xi_1, \xi_2) = |\xi_1 + 2\xi_2|(|\xi_1|^{2j-2} + |\xi_2|^{2j-2}).
\end{align}
We have the following $X_{s,b}$ estimates regarding $I_{p,j}^{\pm,*}$:
\begin{proposition}[\cite{Adams2024}*{Corollary~4.4}]
    Let $1 < q \le r_{1,2} \le p < \infty$ with $\1{p} + \1{q} = \1{r_1} + \1{r_2}$ and $b_i > \1{r_i}$. Then the estimate
    \begin{equation}\label{eq:dual-bilin-free-params}
    \norm{I^{\pm,*}_{p,j}(u,v_\mp)}_{\hat X_{0,-b_1}^{r_1^\prime}} \lesssim \norm{u}_{\widehat{L^{q^\prime}_x}\widehat{L^{p^\prime}_t}} \norm{v}_{\hat X_{0,b_2}^{r_2}}
    \end{equation}
    holds. If alternatively $0 \le \1{\rho^\prime} \le \1{r^\prime}$ and $\beta < -\1{\rho^\prime}$ we have
    \begin{equation}\label{eq:bilin-dual}
    \norm{I^{\pm,*}_{\rho^\prime,j}(u,v_\mp)}_{\hat X_{0,\beta}^r} \lesssim \norm{u}_{\widehat{L^r_{xt}}} \norm{v}_{\hat X_{0,-\beta}^{\rho^\prime}}.
    \end{equation}
    In both cases $v_+ = \overline{v}$ and $v_- = v$.
\end{proposition}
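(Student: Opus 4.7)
The plan is to derive both estimates from the forward bilinear estimate~\eqref{eq:bilin-est} by duality.

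For the first estimate I would proceed as follows. First, dualize the left-hand norm, using the standard pairing that identifies $(\hat X^{r_1'}_{0,-b_1})^*$ with $\hat X^{r_1}_{0,b_1}$, to rewrite
\begin{equation*}
    \norm{I^{\pm,*}_{p,j}(u,v_\mp)}_{\hat X^{r_1'}_{0,-b_1}} = \sup_{\norm{\phi}_{\hat X^{r_1}_{0,b_1}}\le 1} \abs{\langle I^{\pm,*}_{p,j}(u,v_\mp), \phi\rangle}.
\end{equation*}
Second, invoke the defining adjoint relation---namely, that $I^{\pm,*}_{p,j}$ is precisely the adjoint of the linear operator $u \mapsto I^{\pm}_{p,j}(u, v_\pm)$ in its first argument with the opposite sign on the second argument---to transfer the pairing and obtain $\abs{\langle u, I^{\pm}_{p,j}(\phi, v_\pm)\rangle}$. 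Third, apply H\"older's inequality in Fourier-Lebesgue spaces (which on the Fourier side is just ordinary H\"older) to bound this by $\norm{u}_{\widehat{L^{q'}_x}\widehat{L^{p'}_t}}\, \norm{I^{\pm}_{p,j}(\phi, v_\pm)}_{\widehat{L^{q}_x}\widehat{L^{p}_t}}$. Fourth, apply the forward bilinear estimate~\eqref{eq:bilin-est} to the last factor: the arithmetic constraint $\1{p}+\1{q} = \1{r_1}+\1{r_2}$ is exactly the scaling condition this estimate requires, so it yields the bound $\norm{\phi}_{\hat X^{r_1}_{0,b_1}}\, \norm{v}_{\hat X^{r_2}_{0,b_2}}$. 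Taking the supremum over $\phi$ in the unit ball closes the argument.

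The second estimate follows by the same scheme with a different parameter choice. One dualizes $\hat X^{r}_{0,\beta}$ against $\hat X^{r'}_{0,-\beta}$, transfers through the adjoint, and then, thanks to the simpler Fourier-Lebesgue structure on the first factor, bounds the pairing via H\"older on all of space-time by $\norm{u}_{\widehat{L^r_{xt}}}\, \norm{I^{\pm}_{\rho',j}(\phi, v_\pm)}_{\widehat{L^{r'}_{xt}}}$. The bilinear estimate~\eqref{eq:bilin-est}, applied with $q = p = r'$, $r_1 = r'$, $r_2 = \rho'$, then handles the second factor provided $\1{\rho'} \le \1{r'}$; the strict inequality $\beta < -\1{\rho'}$ is precisely what ensures that the weight $\JBX[\tau - \xi^{2j}]^{-\beta}$ picked up from the dualized $\phi$ satisfies the strict $b_1 > \1{r_1}$ hypothesis of~\eqref{eq:bilin-est}.

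The step that demands the most care is identifying the adjoint: one has to verify that the abstract adjoint of the multiplier $I^{\pm}_{p,j}$ (in its first argument) coincides with the explicit multiplier with symbol $k^{\pm,*}$ announced in the text. This amounts to a direct Fourier-side computation in which the convolution constraint $\xi = \xi_1 \pm \xi_2$ is re-expressed via the substitutions $\xi_1 \mapsto -\xi_2$ (for the $+$ case) or $\xi_1 \mapsto \xi_1 + 2\xi_2$ (for the $-$ case), swapping which of the three frequencies is the ``free'' variable. Once this identification is in place, no conceptual novelty and no estimate beyond~\eqref{eq:bilin-est} is needed; the remainder of both proofs is strictly bookkeeping of H\"older exponents and Bourgain-space weights.
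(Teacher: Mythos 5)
This proposition is not proved in the present paper at all: it is quoted verbatim from \cite{Adams2024}*{Corollary~4.4}, and the only methodological hint given here is the sentence preceding it, namely that one ``interprets the bilinear operator as a multiplication operator and finds its adjoint.'' Your duality scheme --- dualize the Bourgain norm, pass the pairing through the adjoint, apply H\"older on the Fourier side, and invoke the forward estimate~\eqref{eq:bilin-est} --- is exactly that mechanism, and for the first estimate~\eqref{eq:dual-bilin-free-params} it goes through cleanly: $\phi$ ranges over the unit ball of $\hat X^{r_1}_{0,b_1}$, and~\eqref{eq:bilin-est} applies with the stated $q \le r_{1,2} \le p$. (Two small remarks: the scaling relation $\1{p}+\1{q}=\1{r_1}+\1{r_2}$ is not actually a hypothesis of~\eqref{eq:bilin-est} as quoted, so your justification of it is slightly off, though harmless since it is assumed anyway; and the Fourier-side computation of the adjoint produces the symbol $|\xi_1|\bigl(|\xi_1+\xi_2|^{2j-2}+|\xi_2|^{2j-2}\bigr)$ in the $+$ case rather than literally $k^{+,*}$, so one must additionally observe that this is controlled by a sum of symbols of the stated form --- a point worth making explicit.)

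For the second estimate~\eqref{eq:bilin-dual}, however, your parameter choice does not satisfy the hypotheses of~\eqref{eq:bilin-est}. You propose applying it with $q=p=r'$, $r_1=r'$, $r_2=\rho'$; but~\eqref{eq:bilin-est} requires $q \le r_2 \le p$, i.e. $\rho' \le r'$, while the hypothesis $0 \le \1{\rho'} \le \1{r'}$ forces $\rho' \ge r'$. So your instantiation only covers the endpoint $\rho'=r'$. Relatedly, your claim that $\beta < -\1{\rho'}$ supplies the condition $b_1 > \1{r_1}$ with $r_1 = r'$ is insufficient, since $-\beta > \1{\rho'}$ is weaker than $-\beta > \1{r'}$ when $\rho' > r'$. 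Closing this case requires an additional ingredient --- e.g. an extra H\"older step in the modulation variable, trading the surplus weight $\JBX[\tau-\xi^{2j}]^{-\beta-\1{\rho'}-}$ against the mismatch between the $L^{\rho}$- and $L^{r}$-based norms, which is precisely what the threshold $\beta < -\1{\rho'}$ is there to permit --- and this step is absent from your sketch. Everything else is bookkeeping, as you say, but this particular piece of bookkeeping is where the second estimate actually lives.
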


Finally we will later also make use of the trilinear $X_{s,b}$ estimates that leads to well-posedness in Fourier-Lebesgue and/or modulation spaces. Recall:
\begin{proposition}[\cite{Adams2024}*{Proposition~5.1}]\label{prop:nls-cubic-hat-estimate}
    Let $1 < r \le 2$, $s = \frac{j-1}{r'}$, $\alpha \in \N_0^3$ with $|\alpha| = 2(j-1)$. Then there exist $b' < 0$ and $b' + 1 > b > \1{r}$ such that one has
    \begin{equation}\label{eq:nls-cubic-hat-estimate}
    \norm{\partial_x^{\alpha_1}u_1 \partial_x^{\alpha_2}\overline{u_2} \partial_x^{\alpha_3}u_3}_{\hat{X}_{s,b^\prime}^r} \lesssim \prod_{i=1}^{3} \norm{u_i}_{\hat{X}_{s,b}^r}.
    \end{equation}
\end{proposition}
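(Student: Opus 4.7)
The plan is to prove the estimate by duality followed by a dyadic Littlewood--Paley decomposition, applying the bilinear estimates~\eqref{eq:bilin-est} and~\eqref{eq:dual-bilin-free-params} after pairing two factors; in the resonant regime the missing smoothing will have to be recovered from the cubic resonance relation. Since the statement is quoted from~\cite{Adams2024}*{Proposition~5.1}, only the structure of the argument is described here.

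First I would dualise, writing the $\hat X^r_{s,b'}$ norm as a supremum against a test function $w$ with $\|w\|_{\hat X^{r'}_{-s,-b'}}\le 1$. This yields on the Fourier side a quadrilinear integral with frequency constraint $\xi=\xi_1-\xi_2+\xi_3$, corresponding modulation constraint on the $\tau_i$, and multiplier $\xi_1^{\alpha_1}\xi_2^{\alpha_2}\xi_3^{\alpha_3}\JBX[\xi]^s$. After dyadic localisation $u_i\mapsto u_{i,N_i}$, $w\mapsto w_N$, I would separate into the three standard regimes: (i) two of the $N_i$ widely separated, (ii) $N_2\ll N_1\sim N_3$, and (iii) $N_1\sim N_2\sim N_3\sim N$.

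In regimes (i) and (ii) I expect no resonance gain to be necessary. The bilinear kernels satisfy $k_j^\pm(\xi_a,\xi_b)\sim N_{\max}^{2j-1}$, which is one derivative stronger than the $|\alpha|=2(j-1)$ derivatives distributed on the factors of the product; that extra derivative, combined with the weights on the remaining factor, absorbs the $\JBX[\xi]^s$ weight demanded by the test function. Pairing $u_1$ or $u_3$ with either $\overline{u_2}$ or with $w$ and choosing the $\pm$ sign in $k_j^\pm$ accordingly then closes these cases via~\eqref{eq:bilin-est} and~\eqref{eq:dual-bilin-free-params}, with H\"older's inequality together with the linear estimates~\eqref{eq:kato1}--\eqref{eq:fefferman-stein} handling the remaining low-frequency factor.

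The main obstacle is the resonant high-high-high regime (iii). I would exploit the algebraic identity
\begin{equation*}
\xi_1^{2j}-\xi_2^{2j}+\xi_3^{2j}-(\xi_1-\xi_2+\xi_3)^{2j} = (\xi_1-\xi_2)(\xi_3-\xi_2)\,Q(\xi_1,\xi_2,\xi_3),
\end{equation*}
with $Q$ a symmetric polynomial of degree $2j-2$. Away from the tubes $\{|\xi_1-\xi_2|\ll N\}$ and $\{|\xi_3-\xi_2|\ll N\}$ the right-hand side is $\gtrsim N^{2j}$, hence at least one of the four modulations $\JBX[\tau_i-\xi_i^{2j}]$ or $\JBX[\tau-\xi^{2j}]$ must be $\gtrsim N^{2j}$; trading a small power of this against the $b$ and $b'$ weights, using $b>\tfrac{1}{r}$, $b'<0$ and $b'+1>b$, yields precisely the missing $s=\tfrac{j-1}{r'}$ derivatives. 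On the excluded near-tube sets, the factor $|\xi_a-\xi_b|$ appearing in $k_j^\pm$ is itself small, and I would exploit this directly through~\eqref{eq:bilin-est}. The most delicate part will be the simultaneous compatibility of the three parameter restrictions on $b$ and $b'$ across all sub-cases, but I expect the choices made in~\cite{Adams2024} to furnish admissible values without modification.
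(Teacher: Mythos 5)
The present paper does not actually prove this proposition: it is imported verbatim from \cite{Adams2024}*{Proposition~5.1} as a known estimate, so there is no in-paper proof to match your proposal against. Judged on its own terms, your toolbox (duality, dyadic localisation, the bilinear operators and their duals, the resonance relation) is the right one, and regimes (i) and (ii) are plausible provided you genuinely stack the gains of~\eqref{eq:bilin-est} and~\eqref{eq:dual-bilin-free-params}: a single application of~\eqref{eq:bilin-est} gains only $N^{(2j-1)/r}$, which already at $r=2$, $j\ge3$ falls short of the $N^{2j-2-s}$ deficit arising when the $\JBX[\xi_2]^s$ weight sits on a low-frequency factor. The genuine gap is in regime (iii), where you have the division of labour between the two tools exactly inverted.

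Away from the tubes you propose to extract the needed gain (which, note, is $2j-2-2s=\frac{2j-2}{r}$ derivatives, not $s=\frac{j-1}{r'}$) from a modulation of size $N^{2j}$. But the exploitable power of any modulation is capped by the hypotheses on $b,b'$: for the output modulation one may use at most $\JBX[\sigma_0]^{b'}$ with $|b'|<\1{r'}$ (since $b'>b-1>\1{r}-1$), and for an input modulation at most a power $b-b_0<1-\1{r}=\1{r'}$, because $b<b'+1<1$ and a weight $b_0>\1{r}$ must be retained on that factor to close the remaining estimate. Hence the maximal modulation gain is $N^{2j/r'-}$, which tends to $N^{0}$ as $r\to1$, while the required gain $N^{(2j-2)/r}$ tends to $N^{2j-2}$; the argument cannot close for $r$ near $1$, precisely the regime the proposition targets. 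In fact no resonance argument is needed there: away from both tubes $|\xi_1-\xi_2|\gtrsim N$, so pairing $u_1$ with $\overline{u_2}$ makes $k_j^{+}\sim N^{2j-1}$ and~\eqref{eq:bilin-est} alone gains $N^{(2j-1)/r}$, which is more than enough and scales correctly in $r$. Conversely, on the near-tube sets the bilinear symbol is \emph{small}, and a small symbol weakens~\eqref{eq:bilin-est} rather than being something one can ``exploit'' through it; what saves you near one tube is the large symbol of the other pairing (or of the dual pairing with the test function), and in the doubly resonant region $|\xi_1-\xi_2|,|\xi_3-\xi_2|\lesssim1$, where the resonance function is also small, one must instead use the $O(1)$ measure of the convolution set or the Fefferman--Stein estimate~\eqref{eq:fefferman-stein}. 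This inversion is consistent with the paper's own remark in the proof of Proposition~\ref{prop:cubic-estimate-hat} that the full modulation gain is only required for the dNLS cubic terms, i.e.\ to recover the single extra derivative beyond what the bilinear estimates already deliver for the NLS-type nonlinearity treated here.
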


\begin{proposition}[\cite{Adams2024}*{Proposition~5.6}]\label{prop:nls-cubic-modulation-estimate}
    Let $j \ge 2$, $2 \le p < \infty$, $s = \frac{j-1}{2}$, $\alpha \in \N_0^3$ with $|\alpha| = 2(j-1)$. Then there exist $b' < 0$ and $b' + 1 > b > \1{2}$ such that one has
    \begin{equation}\label{eq:nls-cubic-modulation-estimate}
    \norm{\partial_x^{\alpha_1}u_1 \partial_x^{\alpha_2}\overline{u_2} \partial_x^{\alpha_3}u_3}_{X_{s,b^\prime}^p} \lesssim \prod_{i=1}^{3} \norm{u_i}_{X_{s,b}^p}.
    \end{equation}
\end{proposition}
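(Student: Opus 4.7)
The plan is to reduce this modulation-space trilinear estimate to the $r=2$ case of Proposition~\ref{prop:nls-cubic-hat-estimate}, applied blockwise to the uniform frequency decomposition that defines $X^p_{s,b}$. I would first write $u_i = \sum_{n_i} \Box_{n_i} u_i$ and use the support property of convolutions in Fourier space to observe that
\[
\Box_n\!\bigl( \partial_x^{\alpha_1}\Box_{n_1} u_1 \cdot \overline{\partial_x^{\alpha_2}\Box_{n_2} u_2} \cdot \partial_x^{\alpha_3}\Box_{n_3} u_3\bigr) = 0
\]
unless $|n - (n_1 - n_2 + n_3)| \lesssim 1$. Since each $\Box_{n_i} u_i$ has Fourier support in a unit-sized box around $n_i$, each derivative $\partial_x^{\alpha_i}$ acts essentially as multiplication by $\JBX[n_i]^{\alpha_i}$, and the outer weight $\JBX[n]^s$ can be absorbed via $\JBX[n]^s \lesssim \JBX[n_1]^s + \JBX[n_2]^s + \JBX[n_3]^s$ onto the largest block (using the $u_1 \leftrightarrow u_3$ symmetry where needed).

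Once these weights are factored out, the task reduces to a uniform-in-$(n_1, n_2, n_3)$ blockwise estimate in the $L^2$-based Bourgain space $\hat X^2_{0, b'}$, in which one must gain a factor $\JBX[n_{\max}]^{-(j-1)}$: the total derivative budget $2(j-1)$ exceeds the combined weight budget $3s = 3(j-1)/2$ by exactly $(j-1)$ in the high-high-high regime. This is precisely the gain delivered by the bilinear operators $I^\pm_{p,j}$ from~\eqref{eq:bilin-est}--\eqref{eq:bilin-dual}, whose symbols have size $\sim \JBX[n_{\max}]^{2j-1}$ and whose $p$-th roots trade derivatives against modulation weight inherited from the resonance function $\xi^{2j} - \xi_1^{2j} + \xi_2^{2j} - \xi_3^{2j}$. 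The blockwise argument then mirrors the $r=2$ case of Proposition~\ref{prop:nls-cubic-hat-estimate}: one splits into resonant and non-resonant regimes, pairs~\eqref{eq:bilin-est} against~\eqref{eq:bilin-dual} in the non-resonant case, and invokes~\eqref{eq:dual-bilin-free-params} in the high-low pairing where derivatives must be redistributed. Choosing $b > 1/2$ sufficiently close to $1/2$ and $b'$ just below $0$ (with $b' + 1 > b$) ensures the resulting loss in the modulation weights is integrable.

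To assemble the global bound, I sum the blockwise estimate over $(n_1, n_2, n_3)$ subject to $n = n_1 - n_2 + n_3 + O(1)$ using Young's convolution inequality in $\ell^p(\Z)$, which is valid for all $2 \le p < \infty$. The main obstacle will be extracting the full $(j-1)$-derivative gain in the high-high-high regime where $|n_1| \sim |n_2| \sim |n_3| \sim N$ with $|n_1 - n_2 + n_3|$ either comparable to $N$ (then the bilinear machinery applies directly) or much smaller than $N$ (the resonant configuration, where two factors must jointly carry the smoothing improvement). There is essentially no room to spare in this regime, which is what forces the threshold $s = (j-1)/2$ to be sharp under this method and also explains why the modulation-space threshold is strictly worse than its Fourier-Lebesgue counterpart $s = (j-1)/r'$ at $r = 2$.
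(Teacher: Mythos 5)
This proposition is not proved in the present paper at all: it is imported verbatim from the author's earlier work (\cite{Adams2024}*{Proposition~5.6}) and merely restated in Section~\ref{sec:known-estimates} as a known estimate, so there is no in-paper proof to compare against. Your overall strategy --- decompose into uniform frequency blocks, use the convolution constraint $n = n_1 - n_2 + n_3 + O(1)$, reduce to a blockwise estimate in the $L^2$-based Bourgain space via the bilinear operators $I^{\pm}_{\cdot,j}$ and the resonance relation, then resum --- is the standard and plausible route for $X^p_{s,b}$ estimates of this type.

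However, the assembly step is a genuine gap. Young's convolution inequality bounds $\norm{a_1 * a_2 * a_3}_{\ell^p}$ by $\prod_i\norm{a_i}_{\ell^{p_i}}$ only when $2 + \1{p} = \1{p_1}+\1{p_2}+\1{p_3}$; with all three factors in $\ell^p$ this forces $p=1$, so for $2 \le p < \infty$ the triple sum over $(n_1,n_2,n_3)$ with $n_1-n_2+n_3 = n + O(1)$ cannot be closed by Young alone. One needs the blockwise constant to supply $\ell^1$-summability in two of the three block indices (roughly an extra $\JBX[n_i]^{-\1{p'}-}$ on two factors), which directly contradicts your own observation that the blockwise bound has ``essentially no room to spare'' at $s = \frac{j-1}{2}$. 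Resolving this tension --- either by exhibiting genuine leftover decay in the non-resonant regimes or by exploiting the resonance function to restrict the counting in the resonant high-high-high configuration --- is precisely where the work lies and why the threshold ends up independent of $p$; your sketch does not address it. Two smaller points: your derivative bookkeeping (``$2(j-1)$ exceeds $3s$ by exactly $(j-1)$'') gives $(j-1)/2$, and the needed gain of $\JBX[n_{\max}]^{j-1}$ only comes out once the output weight $\JBX[n]^s$ is included; and the closing claim that $s=\frac{j-1}{2}$ is ``strictly worse'' than the Fourier--Lebesgue threshold at $r=2$ is false, since $\frac{j-1}{r'} = \frac{j-1}{2}$ there.
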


\subsection{Basic estimate on the resonance relation}

As mentioned in the introduction, the additional derivative in the nonlinear terms of dNLS hierarchy equations adds difficulty (over the NLS hierarchy equations) in their analysis. Additional arguments are necessary to overcome this difficulty. The first step in this direction was the introduction and use of the gauge-transformation in order to simplify, or more precisely, remove ill-behaved terms from, the equations. See Section~\ref{sec:gauge-transformation}.

The second step we take in tackling well-posedness estimates for the dNLS hierarchy equations is exploiting the resonance relation, the effectiveness of which was already demonstrated in~\cite{AG2005}. In the absence of an analogue of the exact factorisation for the resonance relation for higher-order dNLS hierarchy equations one may still recover the essence:
\begin{lemma}[\cite{Forlano2019}*{Lemma~2.3}]\label{lem:modulation-estimate}
    Let $\alpha > 1$, $\xi_1, \xi_2, \xi_3 \in \R$ and set $\xi = \xi_1 + \xi_2 + \xi_3$. Then one has
    \begin{equation}
        ||\xi|^\alpha - |\xi_1|^\alpha + |\xi_2|^\alpha - |\xi_3|^\alpha | \gtrsim |\xi_1 + \xi_2| |\xi_2 + \xi_3| |\xi_{max}|^{\alpha - 2},
    \end{equation}
    where $\xi_{max} = \max\set{|\xi_1|, |\xi_2|, |\xi_3|, |\xi|}$.
\end{lemma}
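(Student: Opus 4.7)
The key observation is that $F := |\xi|^\alpha - |\xi_1|^\alpha + |\xi_2|^\alpha - |\xi_3|^\alpha$ vanishes identically when $\xi_1 + \xi_2 = 0$ (forcing $|\xi_1| = |\xi_2|$ and $\xi = \xi_3$) and, symmetrically, when $\xi_2 + \xi_3 = 0$, so one expects $(\xi_1 + \xi_2)(\xi_2 + \xi_3)$ to divide $F$. The plan is to make this precise via two applications of the fundamental theorem of calculus. Introducing $a := \xi_1 + \xi_2$, $b := \xi_2 + \xi_3$ and treating $\xi_2$ as a parameter, I would regard $F$ as a function $\widetilde F(a, b)$, which by direct substitution satisfies $\widetilde F(0, \cdot) = \widetilde F(\cdot, 0) = 0$. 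Since $\alpha > 1$, the map $\sigma \mapsto |\sigma|^\alpha$ is $C^1$ with $L^1_{\mathrm{loc}}$ distributional second derivative $\alpha(\alpha - 1)|\sigma|^{\alpha - 2}$, and a direct calculation gives $\partial_a \partial_b \widetilde F(s, t) = \alpha(\alpha - 1)\,|s + t - \xi_2|^{\alpha - 2}$. The FTC then produces the representation
\[
|F| \;=\; \alpha(\alpha - 1)\left|\int_0^a\!\!\int_0^b |s + t - \xi_2|^{\alpha - 2}\,dt\,ds\right|.
\]

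It then remains to bound the double integral below by $|ab|\,\xi_{max}^{\alpha - 2}$. A substitution $s \mapsto \sign(a)\,s$, $t \mapsto \sign(b)\,t$ reduces to the case $a, b \ge 0$ without altering the four corner values of the convex function $(s, t) \mapsto |s + t - \xi_2|$ on the rectangle $R := [0, a] \times [0, b]$, which a short check identifies as $|\xi_2|, |\xi_1|, |\xi_3|, |\xi|$ at $(0, 0), (a, 0), (0, b), (a, b)$ respectively. By convexity, the maximum over $R$ of this function is attained at a corner and equals $\xi_{max}$. If $1 < \alpha \le 2$, the proof concludes immediately: $x \mapsto x^{\alpha - 2}$ is non-increasing on $(0, \infty)$, so the integrand is pointwise at least $\xi_{max}^{\alpha - 2}$ and integration over $R$ gives the desired bound $ab\,\xi_{max}^{\alpha - 2}$.

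For $\alpha > 2$ the integrand may degenerate, so the remaining task is to locate a sub-region of $R$ of area $\gtrsim ab$ on which $|s + t - \xi_2| \gtrsim \xi_{max}$, and this I would do by a two-case analysis according to the position of $\xi_2$. If $\xi_2 \notin [0, a+b]$, WLOG $\xi_2 > a + b$ (so $\xi_{max} = \xi_2$), then either $\xi_2 \ge 2(a+b)$—in which case $|s + t - \xi_2| \ge \xi_{max}/2$ holds throughout $R$—or else $\xi_{max} \sim a + b$ and on the triangle $\{s + t \le (a+b)/2\} \cap R$ of area $ab/2$ one has $|s + t - \xi_2| \ge \xi_2 - (a+b)/2 \gtrsim a + b \sim \xi_{max}$. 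If instead $\xi_2 \in [0, a+b]$, convexity pins $\xi_{max} = \max(|\xi_2|, |\xi|) \sim a + b$; after the symmetry $(s, t) \leftrightarrow (a-s, b-t)$ (which swaps $\xi_2$ with $\xi$) one may suppose $\xi_2 \le (a+b)/2$, whereupon on the region $R \cap \{s + t \ge 3(a+b)/4\}$—of area $\gtrsim ab$ by a short geometric computation—one has $|s + t - \xi_2| \ge (a+b)/4 \gtrsim \xi_{max}$. The main obstacle is precisely this geometric case analysis, whose crucial ingredient is the convexity observation that whenever the trough $u = \xi_2$ of $|u - \xi_2|$ lies inside $[0, a+b]$, one automatically has $\xi_{max} \sim a + b$; this pins down the correct scale and makes the sub-region argument go through.
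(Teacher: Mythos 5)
The paper does not actually prove this lemma: it is quoted verbatim from \cite{Forlano2019}*{Lemma~2.3}, so there is no in-paper argument to compare against, and your proposal should be judged as a self-contained proof of the cited result. As such it is correct, and it follows the natural route: writing $F$ as the second-order difference $g(a+b-\xi_2)-g(a-\xi_2)-g(b-\xi_2)+g(-\xi_2)$ of the $C^1$ function $g(\sigma)=|\sigma|^\alpha$ (whose second distributional derivative $\alpha(\alpha-1)|\sigma|^{\alpha-2}$ is locally integrable precisely because $\alpha>1$), so that $|F|$ becomes $\alpha(\alpha-1)$ times the integral of $|s+t-\xi_2|^{\alpha-2}$ over a rectangle of area $|\xi_1+\xi_2|\,|\xi_2+\xi_3|$; the dichotomy at $\alpha=2$ is handled correctly, and I verified the two area computations in the case $\alpha>2$ (exactly $ab/2$ for the lower triangle, and at least $ab/8$ for $R\cap\set{s+t\ge 3(a+b)/4}$), as well as the convexity claim that $c\in[0,a+b]$ forces $\xi_{max}=\max(|c|,|a+b-c|)\in[(a+b)/2,\,a+b]$. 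The one step you should tighten is the sign reduction: when $\sign(a)\ne\sign(b)$, the substitution $s\mapsto\sign(a)s$, $t\mapsto\sign(b)t$ produces the integrand $|s-t-\xi_2|^{\alpha-2}$, whose level sets run along the other diagonal of the rectangle, so the regions $\set{s+t\le(a+b)/2}$ appearing in your case analysis are not the correct ones for that integrand. The repair is one further reflection $t\mapsto|b|-t$, which restores the form $|s+t-c|^{\alpha-2}$ with $c=-\xi_3$; since this preserves the four corner values as a set (hence $\xi_{max}=\max\paren{|c|,\,|A+B-c|}$ with $A=|a|$, $B=|b|$) and your geometric argument uses nothing beyond $A$, $B$ and the position of the trough $c$ relative to $[0,A+B]$, the rest of the proof goes through verbatim.
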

Using this estimate, in combination with the flexibility $X_{s,b}$ spaces offer, will suffice in order to derive the multilinear estimates that lead to well-posedness we are after.
\section{Estimates leading to well-posedness}\label{sec:multilinear-estimates}

With all necessary smoothing estimates that we will need recalled, as well as previous $X_{s,b}$ estimates that we will want to make use of, we are ready to prove the propositions that serve as proof of our Theorem~\ref{thm:gauged-wellposed-hat} and \ref{thm:gauged-wellposed-modulation}. The discussion of the gauge-transformation in Section~\ref{sec:gauge-transformation} combined with these Theorems then also suffice to argue the validity of Theorem~\ref{thm:hierarchy-wellposed-hat-spaces} and~\ref{thm:hierarchy-wellposed-modulation-spaces}.

As is the case for the NLS hierarchy equations, cubic nonlinear terms are more difficult to deal with than their quintic and higher-order counterparts. So we will be dealing with cubic and higher-order terms separately.

\subsection{Multilinear estimates in $\hat X^r_{s,b}$ spaces}
\begin{proposition}\label{prop:cubic-estimate-hat}
    Let $j \ge 2$, $1 < r \le 2$, $s \ge \1{2} + \frac{j-1}{r'}$, and $$\alpha \in \set{(\alpha_1, \alpha_2, \alpha_3) \in \N_0^3 \mid \alpha_1 + \alpha_2 + \alpha_3 = 2j-1,\, \alpha_2 \not=0},$$ then there exist $b' < 0 < \frac{1}{r} < b < b' + 1$ such that the following estimate holds:
    \begin{equation}
    \norm{\partial_x^{\alpha_1}u_1 \partial_x^{\alpha_2}\overline{u_2} \partial_x^{\alpha_3}u_3}_{\hat X_{s, b'}^r} \lesssim \prod_{i=1}^3 \norm{u_i}_{\hat X_{s, b}^r}.
    \end{equation}
\end{proposition}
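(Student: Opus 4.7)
My strategy is to reduce the estimate, via the resonance identity supplied by Lemma~\ref{lem:modulation-estimate}, to the NLS-hierarchy cubic estimate~\eqref{eq:nls-cubic-hat-estimate}, using the modulation gain to pay simultaneously for the additional derivative present in the dNLS nonlinearity ($2j-1$ rather than $2j-2$) and the regularity increment from $(j-1)/r'$ up to $\frac{1}{2} + (j-1)/r'$. First I would dualise the claim to a quadrilinear Fourier integral
\[
\left|\, \iint \prod_{i=1}^{3}(i\xi_i)^{\alpha_i}\,\widetilde u_i(\xi_i, \tau_i)\,\widehat{f_0}(\xi, \tau)\, d\Sigma\,\right| \lesssim \|f_0\|_{\hat X^{r'}_{-s, -b'}} \prod_{i=1}^{3}\|u_i\|_{\hat X^r_{s, b}}
\]
against a generic test $f_0 \in \hat X^{r'}_{-s,-b'}$, where $d\Sigma$ is the convolution measure enforcing $(\xi, \tau) = \sum_i (\xi_i, \tau_i)$ and $\widetilde u_i$ is $\hat u_i$ or its reflected conjugate according to the middle-slot conjugation. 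Because $2j$ is even, summing the four modulation variables $\sigma_0, \sigma_1, \sigma_2, \sigma_3$ along the convolution surface yields the resonance identity
\[
\sigma_0 - \sigma_1 - \sigma_2 - \sigma_3 \;=\; \xi_1^{2j} - \xi_2^{2j} + \xi_3^{2j} - \xi^{2j},
\]
so Lemma~\ref{lem:modulation-estimate} forces at least one of $|\sigma_0|, |\sigma_1|, |\sigma_2|, |\sigma_3|$ to satisfy the lower bound $\gtrsim |\xi_1+\xi_2|\,|\xi_2+\xi_3|\,N_{\max}^{2j-2}$, where $N_{\max} = \max\{|\xi|,|\xi_1|,|\xi_2|,|\xi_3|\}$.

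Next I would carry out the standard Paley--Littlewood decomposition $|\xi_i| \sim N_i$ and split four ways according to which of the four modulations dominates. In each subcase, bring down a small fraction $\theta \in (0, 1)$ of the matching Bourgain weight, trading $\langle \sigma_\ast \rangle^\theta$ for the lower bound $\bigl(|\xi_1+\xi_2|\,|\xi_2+\xi_3|\,N_{\max}^{2j-2}\bigr)^\theta$ inside the integrand. The hypothesis $\alpha_2 \ge 1$ now enters decisively: by the algebraic identity $\partial_x^{\alpha_2}\overline{u_2} = \partial_x\,\partial_x^{\alpha_2-1}\overline{u_2}$, exactly one spare derivative can always be parked on the conjugated factor, so that what remains is a $(2j-2)$-derivative cubic of precisely the form controlled by Proposition~\ref{prop:nls-cubic-hat-estimate} at regularity $(j-1)/r'$. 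The residual $N_{\max}^{\theta(2j-2)}$ piece of the resonance then promotes this bound to the required regularity $\frac{1}{2} + (j-1)/r'$, with $\theta$ chosen just above a threshold of order $\frac{1}{r(2j-2)}$ so that the excess $N_{\max}^{1/r}$ demanded by the dNLS balance is exactly met, provided both $|\xi_1+\xi_2|$ and $|\xi_2+\xi_3|$ are comparable to $N_{\max}$.

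The principal obstacle arises in the degenerate regimes where either $|\xi_1+\xi_2|$ or $|\xi_2+\xi_3|$ is much smaller than $N_{\max}$, so that the resonance lower bound concentrates asymmetrically and refuses to split evenly across the three factors. Here one must switch tools and apply the bilinear operators $I^{\pm}_{p, j}$ and $I^{\pm,*}_{p,j}$ from~\eqref{eq:bilin-est}--\eqref{eq:bilin-dual} to the pair whose frequency combination is small, with the remaining factor controlled by the Fefferman--Stein inequality~\eqref{eq:fefferman-stein} or the Kato-type estimates~\eqref{eq:kato1}--\eqref{eq:kato2}. The bookkeeping is tightest in the transport case $|\xi_2| \ll |\xi_1| \sim |\xi_3| \sim N_{\max}$, where $\overline{u_2}$ occupies a low-frequency position and cannot by itself absorb the high-frequency surplus even though $\alpha_2 \ge 1$; verifying that pairing the bilinear operator between $\overline{u_2}$ and one of $u_1, u_3$ with the modulation weight recovers the missing fraction of regularity uniformly across all dyadic scales is where I expect the bulk of the technical work to lie, and it is the step where the sharpness of the threshold $s = \frac{1}{2} + \frac{j-1}{r'}$ is felt most acutely.
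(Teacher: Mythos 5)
There is a genuine gap in your main case, and the case division itself is misaligned with where the difficulty lies. The reduction to Proposition~\ref{prop:nls-cubic-hat-estimate} at regularity $\frac{j-1}{r'}$ (after parking the spare derivative on $\overline{u_2}$ and distributing the extra half-derivatives) requires the pointwise bound $\langle\xi\rangle\,|\xi_2|\lesssim\langle\xi_1\rangle\langle\xi_3\rangle$, which is exactly~\eqref{eq:cubic-nice-ineq-hat}; when it fails the deficit to be covered by the resonance is $\bigl(\langle\xi\rangle|\xi_2|/(\langle\xi_1\rangle\langle\xi_3\rangle)\bigr)^{1/2}$. Take $|\xi|\sim|\xi_2|\sim N\gg|\xi_1|,|\xi_3|\sim 1$: then $|\xi_1+\xi_2|\sim|\xi_2+\xi_3|\sim N_{\max}\sim N$, so this lies squarely in your \emph{main} case, and the deficit is $\sim N$. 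Lemma~\ref{lem:modulation-estimate} gives $\langle\sigma_{\max}\rangle\gtrsim N^{2j}$, but the fraction $\theta$ of a Bourgain weight you may sacrifice while keeping the exponents admissible for the subsequent application of the NLS estimate is bounded by $1-\frac{1}{r}=\frac{1}{r'}$ (since $b\in(\frac{1}{r},1)$ and $b'\in(-\frac{1}{r'},0)$), so the usable gain is at most $N^{2j/r'}$. For $r\le\frac{2j}{2j-1}$ this is $\le N$ and the argument does not close; indeed your own threshold $\theta\approx\frac{1}{r(2j-2)}$ already exceeds the budget $\frac{1}{r'}$ once $r\le\frac{2j-1}{2j-2}$. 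Since $\frac{1}{r'}\to 0$ as $r\to 1$ while the required $\theta$ stays bounded below, no choice of $\theta$ rescues the "modulation plus NLS estimate" route near the endpoint.

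The paper closes precisely this configuration not by routing through the NLS estimate but by spending the \emph{full} output modulation weight (gain $N^{2j/r'}$) and then invoking the bilinear refinement $I^{+}_{r,j}$ of~\eqref{eq:bilin-est}, whose symbol $\bigl(|\xi_2+\xi_3|(|\xi_2|^{2j-2}+|\xi_3|^{2j-2})\bigr)^{1/r}$ contributes a further $\frac{2j-1}{r}$ derivatives \emph{because} $|\xi_2+\xi_3|\sim N_{\max}$; the combined gain $\frac{2j}{r'}+\frac{2j-1}{r}=2j-\frac{1}{r}\ge 2j-1$ suffices for all $1<r\le 2$. So the bilinear operators are indispensable exactly where $|\xi_1+\xi_2|$ and $|\xi_2+\xi_3|$ are \emph{large} --- the opposite of the "degenerate regimes" to which you relegate them. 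Conversely, the configuration you single out as hardest, $|\xi_2|\ll|\xi_1|\sim|\xi_3|\sim N_{\max}$, satisfies $\langle\xi\rangle|\xi_2|\lesssim\langle\xi_1\rangle\langle\xi_3\rangle$ and is disposed of by the bare algebraic reduction to the NLS estimate, with no input from the resonance relation at all. Your identification of the role of $\alpha_2\ge 1$ and the general philosophy of leaning on the NLS cubic estimate do match the paper; it is the quantitative bookkeeping in the resonant regime and the placement of the bilinear estimates that need to be reorganised.
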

\begin{proof}
    It suffices to prove this estimate for $s = \1{2} + \frac{j-1}{r'}$ fixed.
    For the proof we want to rely, for the most difficult frequency constellations, on the cubic estimate in Proposition~\ref{prop:nls-cubic-hat-estimate}, which was proven in the author's previous work on the NLS hierarchy~\cite{Adams2024}. Relying on the `equivalent' NLS estimate to prove well-posedness for dNLS was already a successful technique employed in~\cite{AG2005}. Though in addition to the arguments presented there we have to utilise the full gain of the modulation in order to close the estimate.

    In particular, one is able to re-use the NLS estimate, if the frequencies $\xi$, $\xi_1$, $\xi_2$, $\xi_3$ allow for the following inequality:
    \begin{equation}\label{eq:cubic-nice-ineq-hat}
    \JBX[\xi]^s |\xi_1|^{\alpha_1} |\xi_2|^{\alpha_2} |\xi_3|^{\alpha_3} \lesssim \JBX[\xi]^{s-\1{2}} \JBX[\xi_1]^{\alpha_1+\1{2}} \JBX[\xi_2]^{\alpha_2-\1{2}} \JBX[\xi_3]^{\alpha_3+\1{2}}.
    \end{equation}
    This is also where it becomes relevant that we assume that at least a single derivative falls on $\overline{u_2}$. Otherwise $\alpha_2 - \1{2}$ may become negative which would in turn require far more detailed analysis, since the NLS estimate could not be as easily applied.

    Furthermore, from here on we will assume, by symmetry, that $u_1$ has larger frequency than $u_3$ and the largest frequency of $|\xi_1|$, $|\xi_2|$, and $|\xi_3|$ shall synonymously be known as $|\xi_{max}|$.

    \begin{enumerate}[wide]\itemsep1em
        \item \eqref{eq:cubic-nice-ineq-hat} holds. In this case we may use the inequality~\eqref{eq:cubic-nice-ineq-hat} and `reinterpret' the cubic nonlinearity as one how it would appear in an NLS hierarchy equation:
        \begin{align}
        \norm{\partial_x^{\alpha_1}u_1 \partial_x^{\alpha_2}\overline{u_2} \partial_x^{\alpha_3}u_3}_{\hat X_{s, b'}^r} &\lesssim
        \norm{\partial_x^{\alpha_1}(J^\1{2}u_1) \partial_x^{\alpha_2-1}(J^\1{2}\overline{u_2}) \partial_x^{\alpha_3}(J^\1{2}u_3)}_{\hat X_{s-\1{2}, b'}^r}\\
        &\lesssim \prod_{i=1}^{3} \norm{J^\1{2}u_i}_{\hat X_{s-\1{2}, b}^r} \lesssim
        \prod_{i=1}^{3} \norm{u_i}_{\hat X_{s, b}^r},
        \end{align}
        where we were then immediately able to apply~\eqref{eq:nls-cubic-hat-estimate} and arrive at our desired upper bound.

        But when is~\eqref{eq:cubic-nice-ineq-hat} true, i.e. which other cases do we still have to deal with?
        \begin{itemize}
            \item It certainly holds if $|\xi_2| \JBX[\xi] \lesssim \JBX[\xi_1] \JBX[\xi_3]$, as from this~\eqref{eq:cubic-nice-ineq-hat} is quite immediate.
            \item When $|\xi_2| \lesssim 1$ or $|\xi| \lesssim 1$ then~\eqref{eq:cubic-nice-ineq-hat} must also hold. This is because either the frequency $|\xi_2|$ is negligible and can easily be traded against $\xi_1$ or $\xi_3$, or because there exist at least two high-frequency factors between which derivatives can be traded painlessly.
        \end{itemize}

        So in all other cases that follow this one we may assume, without loss of generality, that $|\xi| \sim \JBX[\xi]$, $|\xi_2| \sim \JBX[\xi_2]$, and $\JBX[\xi] \JBX[\xi_2] \gg \JBX[\xi_1] \JBX[\xi_3]$, and we will do so without further mention. Further we will also be showcasing the estimate on condition that the modulation of the product is maximal $\JBX[\sigma_0] = \JBX[\sigma_{max}]$. Cases where the modulations of individual factors are maximal can be proven analogously since the remaining cases are non- or at most semi-resonant.
        \item $|\xi_1| = |\xi_{max}|$. In this case, because of $\JBX[\xi] \JBX[\xi_2] \gg \JBX[\xi_1] \JBX[\xi_3]$, it must hold that $\JBX[\xi] \gg \JBX[\xi_3]$ which in turn implies $\JBX[\xi] \lesssim \JBX[\xi_1 + \xi_2]$. From this we may also derive
        \begin{equation}\label{eq:hat-freq-est-first-case}
        |\xi\xi_2| \lesssim |(\xi_1 + \xi_2)\xi_2| \lesssim |\xi_{max}| |\xi_1+\xi_2| = |\xi_1(\xi_1+\xi_2)|.
        \end{equation}
        Further, using our general estimate for the modulation Lemma~\ref{lem:modulation-estimate} we have
        \begin{equation}\label{eq:hat-modulation-est}
        \JBX[\sigma_0] \gtrsim |\xi_1 + \xi_2| |\xi_2 + \xi_3| |\xi_{max}|^{2j-2} \gtrsim |\xi\xi_2| |\xi_1|^{2j-2}
        \end{equation}
        at our disposal. With all preparations done we may focus on proving the estimate.

        As our first step we shift all derivatives of the product, except for one guaranteed to lie on $u_2$, to $u_1$ and use our estimate for the modulation~\eqref{eq:hat-modulation-est}.
        \begin{align}
        &\norm{\partial_x^{\alpha_1}u_1 \partial_x^{\alpha_2}\overline{u_2} \partial_x^{\alpha_3}u_3}_{\hat X_{s, b'}^r} \lesssim
        \norm{\Lambda^{b'} J^{s+\1{2}}((I^{|\alpha|-\frac{3}{2}}u_1) (I^\frac{3}{2}\overline{u_2}) (I^{-\1{2}}u_3))}_{\widehat{L^r_{xt}}}\\
        &\lesssim \norm{J^{s+\1{2}-\1{r'}+}((I^{|\alpha|-\frac{3}{2}-\frac{2j-2}{r'}+}u_1) (I^{\frac{3}{2}-\1{r'}+}\overline{u_2}) (I^{-\1{2}}u_3))}_{\widehat{L^r_{xt}}}
        \intertext{Now using~\eqref{eq:hat-freq-est-first-case} we may shift derivatives again to arrive at:}
        &\lesssim \norm{J^{s+\1{2}-\1{r'}-\1{r}+}(I^\1{r}((I^{|\alpha|-\frac{3}{2}-\frac{2j-2}{r'}+\1{r}+}u_1) (I^{\frac{3}{2}-\1{r'}-\1{r}+}\overline{u_2})) (I^{-\1{2}}u_3))}_{\widehat{L^r_{xt}}},
        \intertext{where we are now ready to upgrade $I^\1{r}$ to our well-known bilinear operator $I^+_{r,j}$ and then apply its corresponding estimate, after dealing with $u_3$ by Hölder's inequality.}
        &\lesssim \norm{J^{s-\1{2}+}(I^\1{r}((I^{|\alpha|-\frac{3}{2}-\frac{2j-2}{r'}+\1{r}+}u_1) (I^{\1{2}+}\overline{u_2})) (I^{-\1{2}}u_3))}_{\widehat{L^r_{xt}}}\\
        &\lesssim \norm{I^+_{r,j}(I^{|\alpha|+s-2-(2j-2)+\1{r}+}u_1, I^{\1{2}+}\overline{u_2})}_{\widehat{L^r_{xt}}} \norm{I^{-\1{2}}u_3}_{\widehat{L^\infty_{xt}}}\\
        &\lesssim \norm{J^{s-1+\1{r}+}u_1}_{\hat X_{0,b}^r} \norm{I^{\1{2}+}u_2}_{\hat X_{0, b}^r} \norm{I^{\1{r}-\1{2}+}u_3}_{\hat X_{0, b}^r} \lesssim \prod_{i=1}^{3} \norm{u_i}_{\hat X^r_{s,b}}
        \end{align}
        The last inequality holds, so long as $\1{r}-1 < 0$ and $s \ge \1{2}+\frac{j-1}{r'} \ge \max(\1{2}+, \1{r}-\1{2}+)$, which is the case for $r > 1$.

        \item $|\xi_2| = |\xi_{max}|$. The argument in this case is quite similar to the preceding case, only that now we do not have to account for the guaranteed derivative on $u_2$ as this is the high-frequency factor to which we shift all derivatives anyway.

        When $|\xi_2|$ is maximal it follows that $|\xi_2| \sim |\xi| \sim |\xi_1 + \xi_2| \sim |\xi_2 + \xi_3|$ and for the modulation, again using Lemma~\ref{lem:modulation-estimate}, we can estimate $\JBX[\sigma_0] \gtrsim |\xi_1+\xi_2| |\xi_2+\xi_3| |\xi_{max}|^{2j-2} \gtrsim |\xi_2|^{2j}$. For proving our estimate this leads us to
        \begin{align}
        &\norm{\partial_x^{\alpha_1}u_1 \partial_x^{\alpha_2}\overline{u_2} \partial_x^{\alpha_3}u_3}_{\hat X_{s, b'}^r} \lesssim
        \norm{\Lambda^{b'} (u_1 (I^{|\alpha|+s} \overline{u_2}) u_3)}_{\widehat{L^r_{xt}}}\\
        &\lesssim \norm{(I^{-\1{2}}u_1) I^{\1{r}}((I^{|\alpha|+s+\1{2}-\1{r}-\frac{2j}{r'}+} \overline{u_2}) u_3)}_{\widehat{L^r_{xt}}}
        \\&\lesssim \norm{I^{-\1{2}}u_1}_{\widehat{L^\infty_{xt}}} \norm{I^{\1{r}}((I^{|\alpha|+s+\1{2}-\1{r}-\frac{2j}{r'}+} \overline{u_2}) u_3)}_{\widehat{L^r_{xt}}}\notag\\
        &\lesssim \norm{I^{\1{r}-\1{2}+}u_1}_{\hat X_{0, b}^r} \norm{I^+_{r,j}(I^{|\alpha|+s+\1{2}-\1{r}-\frac{2j}{r'}-\frac{2j}{r}+} \overline{u_2}, u_3)}_{\widehat{L^r_{xt}}}\\
        &\lesssim \norm{I^{\1{r}-\1{2}+}u_1}_{\hat X_{0, b}^r} \norm{I^{s-\1{2}-\1{r}+} \overline{u_2}}_{\hat X^r_{0, b}} \norm{u_3}_{\hat X^r_{0,b}} \lesssim \prod_{i=1}^{3} \norm{u_i}_{\hat X^r_{s,b}},
        \end{align}
        which is the desired upper bound, if $r > 1$, so the proof is complete.
    \end{enumerate}
\end{proof}

Since in the proof of the necessary quintilinear (and higher-order) estimate to argue our well-posedness Theorems we rely on the fact $s < \1{r}$, we will argue the estimate for the full range of parameter $1 < r \le 2$ in two parts. First we will prove Proposition~\ref{prop:quintic-estimate-hat-endpoint} below, which for a comparatively higher level of regularity establishes the multilinear estimate near the endpoint $r \to 1$. This we can then in turn interpolate with the $L^2$-based estimate that is part of Proposition~\ref{prop:quintic-estimate-modulation} in order to cover the full parameter range.
\begin{proposition}\label{prop:quintic-estimate-hat-endpoint}
    Let $j \ge 2$, $2 \le k \le 2j$ and $\alpha \in \N_0^{2k+1}$ with $|\alpha| = 2j-k$. Then there exists an $1 < r_0 \ll 2$ such that for all $1 < r < r_0$ and $s > \1{2}+\frac{j-k}{kr'}$ there exist $b' < 0 < \frac{1}{r} < b < b' + 1$ such that the following estimate holds
    \begin{equation}
    \norm{\prod_{i=1}^{2k+1} \partial_x^{\alpha_{i}}v_{i}}_{\hat X_{s, b'}^r} \lesssim \prod_{i=1}^{2k+1} \norm{u_i}_{\hat X_{s, b}^r},
    \end{equation}
    where exactly $k$ of the factors $v_1, v_2, \ldots, v_{2k+1}$ are equal to $\overline{u_i}$ and otherwise just equal to $u_i$.
\end{proposition}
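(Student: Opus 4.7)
Since $b' < 0$ and $r \le 2$, dropping the output modulation weight and applying Hausdorff--Young yields
\[
    \|f\|_{\hat X^r_{s, b'}} \le \|\langle\xi\rangle^s \F_{x,t} f\|_{L^{r'}_{\tau,\xi}} \lesssim \|J^s f\|_{L^r_{xt}}.
\]
Using the triangle inequality $\langle\xi\rangle^s \lesssim \sum_i \langle\xi_i\rangle^s$ on the Fourier side, where $\xi = \sum_i \pm\xi_i$, the $J^s$ weight can be transferred onto the factor of maximal frequency, which by symmetry I take to be $v_1$. The task thereby reduces to bounding $\|J^s \partial_x^{\alpha_1}v_1 \cdot \prod_{i \ge 2} \partial_x^{\alpha_i}v_i\|_{L^r_{xt}}$.

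Next, I apply H\"older's inequality to distribute the $2k+1$ factors. Three of them are placed in $L^{3r}_{xt}$ and controlled by the Fefferman--Stein estimate~\eqref{eq:fefferman-stein}, which supplies a gain of $\frac{2(j-1)}{3r}$ derivatives per factor. The remaining $2k-2$ factors are placed into Lebesgue spaces $L^{p_i}_{xt}$ with $\frac{1}{r} + \sum_{i \ge 4} \frac{1}{p_i} = \frac{1}{r}$, controlled by the Fourier-side Sobolev-type embedding $\hat X^r_{s, b} \hookrightarrow L^{p_i}_{xt}$ valid for $s > \frac{1}{r} - \frac{1}{p_i}$. The regularity threshold $s > \frac{1}{2} + \frac{j-k}{kr'}$ is recovered by balancing the total derivative count $|\alpha| + s = 2j - k + s$ against the sum of derivatives absorbable by the three Fefferman--Stein factors plus the Sobolev slack on the remaining ones; the slack exists precisely because $|\alpha| = 2j - k$ is strictly smaller than the number of factors $2k + 1$ when $k \ge 2$.

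\textbf{Main obstacle.} The Fefferman--Stein estimate~\eqref{eq:fefferman-stein} requires $r > \tfrac{4}{3}$, which fixes $r_0$ in the range $(\tfrac{4}{3}, 2)$. In the frequency configurations where the naive H\"older distribution is insufficient---typically the near-resonant high-high-high interactions where the output modulation $\langle\tau - \xi^{2j}\rangle$ dominates---the negative weight $\langle\tau - \xi^{2j}\rangle^{b'}$ can be exploited via Lemma~\ref{lem:modulation-estimate}, which provides $\langle\tau-\xi^{2j}\rangle \gtrsim |\xi_a+\xi_b||\xi_b+\xi_c||\xi_{\max}|^{2j-2}$ on any triple of input frequencies and hence yields an additional $|2b'(j-1)|$ derivatives of smoothing from the modulation factor alone. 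Dualizing and distributing this gain among the dominant frequencies then closes the estimate. Verifying that the exponent bookkeeping remains consistent uniformly in all admissible multi-indices $\alpha$ is the crux of the technical work, and is what dictates the precise value of $r_0$.
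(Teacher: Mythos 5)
There are two genuine gaps. First, your central device --- placing three factors in $L^{3r}_{xt}$ and invoking the Fefferman--Stein estimate~\eqref{eq:fefferman-stein} --- requires $\tfrac1r<\tfrac34$, i.e.\ $r>\tfrac43$, and you conclude that this ``fixes $r_0$ in the range $(\tfrac43,2)$''. This misreads the quantifiers: the proposition asserts the estimate for \emph{all} $r$ in $(1,r_0)$, an interval anchored at $r=1$, and it is precisely the near-endpoint regime $r\to1$ that this proposition exists to cover (it is afterwards interpolated with the $p=2$ case of Proposition~\ref{prop:quintic-estimate-modulation} to obtain Corollary~\ref{cor:quintic-estimate-hat} on the full range $1<r\le2$; an estimate valid only for $r>\tfrac43$ can never reach below $\tfrac43$ by interpolation with the $r=2$ case). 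The paper's proof avoids this by using \emph{four} high-frequency factors in $L^{4r}_{xt}$, i.e.\ applying~\eqref{eq:fefferman-stein} with exponent $\rho=\tfrac{4r}{3}$, for which $\tfrac1\rho=\tfrac{3}{4r}<\tfrac34$ holds for every $r>1$; the price is landing in the space $\hat X^{4r/3}_{0,\frac{3}{4r}+}$ and paying $\tfrac1{4r}+$ derivatives to return to $\hat X^r_{0,b}$. Relatedly, your Sobolev condition $s>\tfrac1r-\tfrac1{p_i}$ with $p_i=\infty$ demands $s>\tfrac1r$, whereas the proof is explicitly arranged so that $s-\tfrac1r<0$ --- this is exactly what the smallness of $r_0$ buys.

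Second, your argument only addresses the configuration in which several factors carry comparable large frequency. When a single factor (or only two or three) is large, say $|\xi_1|\sim|\xi|\gg|\xi_2|$, the Fefferman--Stein gain $I^{2(j-1)/(3r)}$ is worthless on the low-frequency factors, and the one application to $u_1$ recovers only about $\tfrac{2(j-1)}{3r}$ of the $s+2j-k$ derivatives that must be absorbed there. Your fallback via Lemma~\ref{lem:modulation-estimate} does not close this: that lemma concerns the \emph{cubic} resonance function and does not transfer to the $(2k+1)$-linear one for $k\ge2$, and, more fatally, the available modulation weight is $\JBX[\sigma_0]^{b'}$ with $b'>b-1>\tfrac1r-1=-\tfrac1{r'}$, so $|b'|<\tfrac1{r'}\to0$ as $r\to1$ and the purported gain degenerates exactly in the regime of the proposition. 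The paper instead treats these cases with the bilinear Kato-type operators $I^+_{r,j}$ and the dual $I^{-,*}_{\rho',j}$ via~\eqref{eq:bilin-est} and~\eqref{eq:dual-bilin-free-params}, which together yield roughly $\tfrac{2j-1}{r}+\tfrac{2j-1}{\rho'}$ derivatives of smoothing on the isolated high-frequency factor --- enough to absorb the full derivative count. Without an argument of this type the few-high-frequency cases remain open.
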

\begin{proof}
    Without loss of generality we may assume that the frequencies of the $2k+1$ factors are ordered decreasingly, i.e. $|\xi_1| \ge |\xi_2| \ge \ldots |\xi_{2k+1}|$. We will analyse the product based on the number of high-frequency factors present.

    Throughout the proof we will need to make use of the fact $s - \1{r} < 0$, which we may achieve by choosing $1 < r_0 \ll 2$ appropriately small.
    \begin{enumerate}[wide]\itemsep1em
        \item $|\xi_4| \gtrsim |\xi_1|$, so we have at least 4 high-frequency factors. In this case every factor of the product passes through a norm that is invariant with respect to complex conjugation, so we may ignore its distribution among the factors in this case.

        The idea of the proof in this case is to use the Fefferman-Stein estimate for the high-frequency factors (of which we need 4 in order to ensure its applicability) and a Sobolev-type embedding for the rest. We start by distributing the derivatives of the norm and those in the product on the high-frequency factors in addition to leaving a little leeway for embeddings on the remaining factors. Then we use the Hausdorff-Young inequality to 'remove the hat' from the space.
        \begin{align}
        &\norm{\prod_{i=1}^{2k+1} \partial_x^{\alpha_{i}}v_{i}}_{\hat X_{s, b'}^r}
        \lesssim \norm{(J^{\sigma}u_1)(J^{\sigma}u_2)(J^{\sigma}u_3)(J^{\sigma}u_4)\prod_{i=5}^{2k+1} J^{s-\1{r}-}u_i}_{L^r_{xt}}
        \intertext{This requires $\sigma \ge 0$, $4\sigma + (2k-3)(s-\1{r}) \ge s + 2j-k$ as well as $s -\1{r} - < 0$ the latter of which is ensured by our choice of $r_0$ at the beginning of this proof. By now using Hölder's inequality, Hausdorff-Young again (to put the hat back on $L^\infty$) and a Sobolev-type embedding we arrive at:}
        &\lesssim \prod_{i=1}^{4}\norm{J^{\sigma}u_i}_{L^{4r}_{xt}} \prod_{i=5}^{2k+1} \norm{J^{s-\1{r}-}u_i}_{\widehat{L^\infty_{xt}}} \lesssim \prod_{i=1}^{4}\norm{J^{\sigma}u_i}_{L^{4r}_{xt}} \prod_{i=5}^{2k+1} \norm{u_i}_{\hat X_{s, b}^r}
        \intertext{As announced before, we may now use the Fefferman-Stein estimate~\eqref{eq:fefferman-stein} which grants us a gain of $\frac{2(j-1)}{4r}$ derivatives on each of the high-frequency factors, but leaves us in the wrong $\hat X_{0,\frac{3}{4r}+}^{\frac{4r}{3}}$ space. To remedy this we may use a Sobolev-type inequality for which we have to spend $\1{4r}+$ derivatives.}
        &\lesssim \prod_{i=1}^{4}\norm{J^{\sigma - \frac{2(j-1)}{4r}+\1{4r}+}u_i}_{\hat X_{0,b}^r} \prod_{i=5}^{2k+1} \norm{u_i}_{\hat X_{s, b}^r} \lesssim \prod_{i=1}^{2k+1} \norm{u_i}_{\hat X_{s, b}^r}
        \end{align}
        The reader my verify that for $s > \1{2} + \frac{j-k}{kr'}$ and the choice $\sigma = s + \frac{2(j-1)}{4r} - \1{4r}-$ the requirements gathered involving $\sigma$ can be fulfilled and we may justify the final inequality to arrive at the desired upper bound.

        \item $|\xi| \sim |\xi_1| \gg |\xi_2|$, so we have only a single high-frequency factor.

        One needs to take care as to what the distribution of complex conjugates in the product is. We will showcase a proof of the estimate in the instance that the product we are dealing with is equal to $\overline{u_1} (\prod_{i=2}^{2k-3}v_i) u_{2k-2}u_{2k-1}u_{2k}\overline{u_{2k+1}}$ (ignoring derivatives). This aligns with the requirement, that $k$ of the factors are complex conjugates. The other cases, for different distributions of complex conjugates can be dealt with in a similar fashion and we omit the details.

        With only a single large frequency we have immediate control over the symbols of the bilinear operators $I^+_{r,j}$ and $I^{-,*}_{\rho',j}$. We proceed by shifting all derivatives of the norm and in the product onto the high-frequency factor in addition to some extra derivatives we will later need for Sobolev-type embeddings.
        \begin{align}
        \norm{\prod_{i=1}^{2k+1} \partial_x^{\alpha_{i}}v_{i}}_{\hat X_{s, b'}^r}
        \lesssim \norm{J^{\sigma}\overline{u_1}(\prod_{i=2}^{2k-1} J^{s-\1{r}-} v_{i}) J^{s}u_{2k} J^{s-\1{r}+\1{\rho'}-}\overline{u_{2k+1}}}_{\hat X_{0, b'}^r}
        \end{align}
        Here we have introduced $\sigma \ge 0$ and this inequality holds so long as $\sigma + (2k-2)(s-\1{r}) + s + (s-\1{r}+\1{\rho'}) > s + 2j-k$. Furthermore $s -\1{r}- < 0$ is ensured by our choice of $r_0$ at the beginning of this proof.

        Next we introduce the bilinear operator $I^+_{r,j}$ which grants us a gain of $\frac{2j-1}{r}$ derivatives on the high-frequency factor:
        \begin{align}
        &\lesssim \norm{I^+_{r,j}(J^{\sigma-\frac{2j-1}{r}}\overline{u_1}, J^{s}u_{2k})(\prod_{i=2}^{2k-1} J^{s-\1{r}-} v_{i}) J^{s-\1{r}+\1{\rho'}-}\overline{u_{2k+1}}}_{\hat X_{0, b'}^r}\\
        &\lesssim \norm{I^{-,*}_{\rho',j}(I^+_{r,j}(J^{\sigma-\frac{2j-1}{r}-\frac{2j-1}{\rho'}}\overline{u_1}, J^{s}u_{2k})\prod_{i=2}^{2k-1} J^{s-\1{r}-} v_{i}, J^{s-\1{r}+\1{\rho'}-}\overline{u_{2k+1}})}_{\hat X_{0, b'}^r}
        \intertext{Choosing $\1{\rho'} \le \1{r}$ with $\frac{2(j-k)}{r} < \frac{2j}{\rho'}$ we now also introduce its dual $I^{-,*}_{\rho',j}$, which grants us $\frac{2j-1}{\rho'}$ on the high-frequency factor. Now applying the continuity of the dual bilinear operator~\eqref{eq:dual-bilin-free-params} and Hölder's inequality we may derive}
        &\lesssim \norm{I^+_{r,j}(J^{\sigma-\frac{2j-1}{r}-\frac{2j-1}{\rho'}}\overline{u_1}, J^{s}u_{2k})\prod_{i=2}^{2k-1} J^{s-\1{r}-} v_{i}}_{\widehat{L^r_{xt}}} \norm{J^{s-\1{r}+\1{\rho'}-}u_{2k+1}}_{\hat X_{0, -b'}^{\rho'}}\\
        &\lesssim \norm{I^+_{r,j}(J^{\sigma-\frac{2j-1}{r}-\frac{2j-1}{\rho'}}\overline{u_1}, J^{s}u_{2k})}_{\widehat{L^r_{xt}}} \prod_{i=2}^{2k-1} \norm{J^{s-\1{r}-} u_{i}}_{\widehat{L^\infty_{xt}}} \norm{J^{s-\1{r}+\1{\rho'}-}u_{2k+1}}_{\hat X_{0, -b'}^{\rho'}}\notag
        \intertext{For the first factor we apply the continutiy of the bilinear operator~\eqref{eq:bilin-est}, for the factors in the product we use a Sobolev-type embedding and for the final factor Young's inequality:}
        &\lesssim \norm{J^{\sigma-\frac{2j-1}{r}-\frac{2j-1}{\rho'}}u_1}_{\hat X_{0, b}^r} \norm{J^s u_{2k}}_{\hat X_{0, b}^r} \prod_{i=2}^{2k-1} \norm{J^s u_{i}}_{\hat X_{0, b}^r} \norm{J^s u_{2k+1}}_{\hat X_{0, b}^{r}}
        \end{align}
        By choosing $\sigma = \frac{2k-1}{r} + 2j-k -(2k-1)s - \1{\rho'} + > 0$ and our choice of $\rho'$ the reader may verify that $\sigma-\frac{2j-1}{r}-\frac{2j-1}{\rho'} < s$ and that the other requirements with respect to $\sigma$ are fulfilled. Hence we have accomplished the proof of the estimate in this case.

        \item $|\xi_2| \gtrsim |\xi_1| \gg |\xi_3|$ or $|\xi_3| \gtrsim |\xi_1| \gg |\xi_4|$, so we have two or three high-frequency factors. In this case, also depending on which factors are complex conjugates, we may parenthesise differently in use of the bilinear operator to the preceding case. Different distributions of complex conjugates may be dealt with by using either $I^+_{r, j}$ or $I^-_{r, j}$ (and their duals) appropriately. The arguments are similar to case we have already dealt with, so we choose to omit the details.
    \end{enumerate}
\end{proof}

\subsection{Multilinear estimates in $X^p_{s,b}$ spaces}
The proof of the cubic estimate in modulation space-based $X_{s,b}$ spaces is very similar to the proof of Proposition~\ref{prop:cubic-estimate-hat} (in the $r=2$ case), where the equivalent estimate for Fourier-Lebesgue-based spaces is showcased. We choose to omit the details that are analogous and only show the necessary additional arguments.
\begin{proposition}\label{prop:cubic-estimate-modulation}
    Let $j \ge 2$, $2 \le p < \infty$, $s \ge \frac{j}{2}$, and $$\alpha \in \set{(\alpha_1, \alpha_2, \alpha_3) \in \N_0^3 \mid \alpha_1 + \alpha_2 + \alpha_3 = 2j-1,\, \alpha_2 \not=0},$$ then there exist $b' < 0 < \frac{1}{r} < b < b' + 1$ such that one has the estimate
    \begin{equation}
    \norm{\partial_x^{\alpha_1}u_1 \partial_x^{\alpha_2}\overline{u_2} \partial_x^{\alpha_3}u_3}_{X_{s, b'}^p} \lesssim \prod_{i=1}^3 \norm{u_i}_{X_{s, b}^p}.
    \end{equation}
\end{proposition}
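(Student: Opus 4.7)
The plan is to closely imitate the proof of Proposition~\ref{prop:cubic-estimate-hat} in the case $r=2$, replacing the Fourier-Lebesgue based estimates with their modulation-space based analogues. It suffices to treat the endpoint $s = \frac{j}{2}$. The central observation is the elementary Leibniz-type inequality
\begin{equation}\label{eq:modulation-trade}
\JBX[\xi]^s |\xi_1|^{\alpha_1}|\xi_2|^{\alpha_2}|\xi_3|^{\alpha_3} \lesssim \JBX[\xi]^{s-\1{2}} \JBX[\xi_1]^{\alpha_1+\1{2}} \JBX[\xi_2]^{\alpha_2-\1{2}} \JBX[\xi_3]^{\alpha_3+\1{2}},
\end{equation}
which holds whenever $|\xi_2|\JBX[\xi] \lesssim \JBX[\xi_1]\JBX[\xi_3]$ (in particular whenever $|\xi_2|$ or $|\xi|$ is bounded); the assumption $\alpha_2 \ge 1$ guarantees that the exponent $\alpha_2 - \1{2}$ remains non-negative. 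In this easy regime the estimate reduces directly to Proposition~\ref{prop:nls-cubic-modulation-estimate} applied at NLS-type regularity $s - \1{2} = \frac{j-1}{2}$ with reshuffled derivatives $(\alpha_1+\1{2}, \alpha_2-\1{2}, \alpha_3+\1{2})$ of total order $2(j-1)$.

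In the complementary regime we may assume $|\xi|\sim\JBX[\xi]$, $|\xi_2|\sim\JBX[\xi_2]$ and $|\xi_2|\JBX[\xi] \gg \JBX[\xi_1]\JBX[\xi_3]$, and by symmetry that $|\xi_1|\ge|\xi_3|$. The resonance estimate of Lemma~\ref{lem:modulation-estimate} with $\alpha = 2j$ then supplies
\begin{equation}
\JBX[\sigma_{\max}] \gtrsim |\xi_1 + \xi_2|\,|\xi_2 + \xi_3|\,|\xi_{\max}|^{2j-2}.
\end{equation}
If $|\xi_1| = |\xi_{\max}|$ the constraint forces $|\xi| \lesssim |\xi_1 + \xi_2|$, yielding $|\xi \xi_2||\xi_1|^{2j-2} \lesssim \JBX[\sigma_{\max}]$; if instead $|\xi_2| = |\xi_{\max}|$ we have $|\xi|\sim|\xi_2|\sim|\xi_1+\xi_2|\sim|\xi_2+\xi_3|$ and thus $|\xi_2|^{2j}\lesssim\JBX[\sigma_{\max}]$. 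In either situation, shifting the surplus derivatives onto the highest-frequency factor, using a fraction of the modulation weight to redeem half a derivative, and then applying the bilinear operator estimate~\eqref{eq:bilin-est} with $r_1=r_2=2$ together with the Kato smoothing estimate $\norm{u}_{L^\infty_x L^2_t} \lesssim \norm{u}_{X_{0,b}}$ from~\eqref{eq:kato1} (or~\eqref{eq:max-fct} depending on which factor is being placed in a fixed $L^\infty$ norm) closes the argument on the level of the underlying $\hat X^2_{0,b}$ pieces, exactly as in the second and third cases of the proof of Proposition~\ref{prop:cubic-estimate-hat}.

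The only genuine extra bookkeeping compared to the Fourier-Lebesgue setting concerns the uniform decomposition $(\Box_n)_{n\in\Z}$ encoded in the definition of $X^p_{s,b}$. Writing $u_i = \sum_{n_i}\Box_{n_i} u_i$, the Fourier support of $(\Box_{n_1} u_1)(\overline{\Box_{n_2} u_2})(\Box_{n_3} u_3)$ sits inside a bounded neighbourhood of $n_1 - n_2 + n_3$, so the outer $\ell^p_n$-sum decouples along the constraint $n = n_1 - n_2 + n_3$. Using $\JBX[n]^s \lesssim \JBX[n_1]^s + \JBX[n_2]^s + \JBX[n_3]^s$ to distribute the weight onto each factor and then applying Young's convolution inequality $\ell^p \ast \ell^1 \ast \ell^1 \hookrightarrow \ell^p$, one reduces the $\ell^p_n$-estimate to a product of $X^p_{s,b}$-norms of the factors, at the cost of placing two of the factors in an $\ell^1$-weighted norm. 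By the modulation-space Sobolev embedding~\eqref{eq:modulation-sobolev}, this $\ell^1$-loss is absorbed by a negligible $\eps$-loss in the Sobolev exponent, which is available because the $r=2$ version of~\eqref{eq:cubic-nice-ineq-hat} leaves us strictly above the critical Sobolev threshold.

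I expect the main obstacle to be the careful tracking of the bilinear operator $I^+_{2,j}$ through the $\Box_n$ decomposition: the bilinear estimate~\eqref{eq:bilin-est} is stated on the whole scale $\hat X^r_{0,b}$, so to insert it into the modulation space framework one must either pre-decompose the inputs on the frequency side and verify that the resulting localized bilinear estimates respect the $\ell^p$ summation, or work directly with the $\hat X^2_{0,b}$-norms of the $\Box_{n_i} u_i$ and transfer the regularity weights through the constraint $n = n_1 - n_2 + n_3$ via the Leibniz argument above. Once this pass from $\hat X^2$ to $X^p$ at the level of dyadic blocks is made rigorous, the remainder of the proof is a verbatim transcription of the $r=2$ case of Proposition~\ref{prop:cubic-estimate-hat}.
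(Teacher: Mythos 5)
Your treatment of the regime where \eqref{eq:cubic-nice-ineq-hat} holds matches the paper exactly: trade half a derivative off $\overline{u_2}$ and invoke Proposition~\ref{prop:nls-cubic-modulation-estimate}. For the resonant cases, however, your proposal diverges from the paper and has a genuine gap. The paper does not decompose the product into $\Box_{n_i}$-blocks and re-run the bilinear estimates at that level; it simply uses the trivial, lossless embedding $X^p_{s,b'} \supset X_{s,b'}$ at the very start (valid since $\ell^2 \hookrightarrow \ell^p$ for $p \ge 2$), reuses the entire $r=2$ argument of Proposition~\ref{prop:cubic-estimate-hat} verbatim in $L^2$-based spaces to land on $\norm{J^{s-\1{2}+}u_1}_{\hat X_{0,b}} \norm{u_2}_{\hat X_{0+,b}} \norm{u_3}_{\hat X_{\1{2}+,b}}$, and only then converts each factor back to $X^p_{s,b}$ via the Sobolev-type embedding~\eqref{eq:modulation-sobolev} at a cost of $\1{2}-\1{p}+\eps$ derivatives per factor. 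The binding condition is $1 - \1{p} < s$, which holds since $s \ge \frac{j}{2} \ge 1$. This sidesteps entirely the issue you flag as your ``main obstacle'' (pushing $I^+_{2,j}$ through the uniform decomposition), which your write-up acknowledges but does not resolve.

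The concrete error in your alternative route is the claim that the $\ell^1$-loss from Young's inequality $\ell^p \ast \ell^1 \ast \ell^1 \hookrightarrow \ell^p$ is ``absorbed by a negligible $\eps$-loss in the Sobolev exponent.'' Passing from an $\ell^p$-weighted to an $\ell^1$-weighted norm via~\eqref{eq:modulation-sobolev} costs strictly more than $1 - \1{p}$ derivatives per factor, not $\eps$. Since the factor carrying $J^{s-\1{2}+}$ cannot absorb even $\1{2}$ extra derivatives (that would need $p<2$), it must occupy the $\ell^p$ slot, forcing the factor carrying $J^{\1{2}+}$ into an $\ell^1$ slot; the resulting condition $\frac{3}{2}-\1{p} < s$ fails at the endpoint $s = \frac{j}{2} = 1$ for $j=2$ and every $p \ge 2$. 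One can rescue a Young-type argument by distributing the exponents more evenly (e.g.\ placing the $J^{\1{2}+}$ factor in $\ell^{q}$ with $\1{q} < \1{2}+\1{p}$), but as written your bookkeeping loses the claimed range. The paper's early embedding into the $L^2$-based Bourgain space is both simpler and sharper, costing only $\1{2}-\1{p}+\eps$ per factor instead of up to $1-\1{p}+\eps$.
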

\begin{proof}
Main idea of the proof is again to reuse the corresponding NLS estimate for cubic terms (that is Proposition~\ref{prop:nls-cubic-modulation-estimate} in this case) in the most difficult resonant cases. We argue along the lines of the first case in the proof of Proposition~\ref{prop:cubic-estimate-hat}, replacing any mention of an $\hat X^r_{s,b}$ space with the appropriate $X^p_{s,b}$ space.

What is left is to argue the remaining two cases where either $\xi_1$ or $\xi_2$ is the maximal frequency (here we have also adopted the convention that the frequency of $u_1$ is greater than that of $u_3$, without loss).

For both cases we begin by using the trivial embedding $X_{s,b'}^p \supset X_{s,b'}$, so that we may reuse what was argued in the $r=2$ case in Proposition~\ref{prop:cubic-estimate-hat}. Following along the lines of the proof one arrives at a bound
\begin{align}
    \norm{\partial_x^{\alpha_1}u_1 \partial_x^{\alpha_2}\overline{u_2} \partial_x^{\alpha_3}u_3}_{X_{s, b'}^p} &\lesssim
    \norm{\partial_x^{\alpha_1}u_1 \partial_x^{\alpha_2}\overline{u_2} \partial_x^{\alpha_3}u_3}_{X_{s, b'}}\\
    &\lesssim \norm{J^{s-\1{2}+}u_1}_{\hat X_{0,b}} \norm{u_2}_{\hat X_{0+, b}} \norm{u_3}_{\hat X_{\1{2}+, b}},
\end{align}
where possibly the roles of $u_1$, $u_2$ and $u_3$ are interchanged depending on the exact case (i.e. $|\xi_1| = |\xi_{max}|$ or $|\xi_2| = |\xi_{max}|$). Now, using the Sobolev-type embedding for modulation spaces~\eqref{eq:modulation-sobolev}, we may bound this by our desired right-hand side so long as $s -\1{2}+\1{2}-\1{p} < s$ and $\1{2}+\1{2}-\1{p} < s$, which can be achieved for $p < \infty$, $j \ge 2$ and $s \ge \frac{j}{2}$ as claimed.
\end{proof}

\begin{proposition}\label{prop:quintic-estimate-modulation}
    Let $j \ge 2$, $2 \le p \le \infty$, $2 \le k \le 2j$, $s > \1{2} + \1{4k} - \frac{2k+1}{2kp}$ and $\alpha \in \N_0^{2k+1}$ with $|\alpha| = 2j-k$, then there exist $b' < 0 < \frac{1}{2} < b < b' + 1$ such that the following estimate holds:
    \begin{equation}\label{eq:quintic-estimate-modulation}
    \norm{\prod_{i=1}^{2k+1} \partial_x^{\alpha_i}u_i}_{X_{s, b'}^p} \lesssim \prod_{i=1}^{2k+1} \norm{u_i}_{X_{s, b}^p}.
    \end{equation}
    Additionally, the distribution of complex conjugates on the factors $u_i$ may be chosen arbitrarily.
\end{proposition}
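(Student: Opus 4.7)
The plan is to adapt the proof of Proposition~\ref{prop:quintic-estimate-hat-endpoint} from the Fourier-Lebesgue-based $\hat X^r_{s,b}$ setting to the modulation-space-based $X^p_{s,b}$ setting. Since the $X^p_{s,b}$ norm is invariant under complex conjugation of its argument (conjugation merely replaces each $\Box_n$ by $\Box_{-n}$, which does not affect the $\ell^p_n(\Z)$-summation), the distribution of conjugates among the factors $u_i$ plays no role in the estimate, which establishes the last assertion of the proposition for free.

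For the main estimate, I would first use the convolution-support identity $\Box_n(\prod_i u_i) = \sum_{n_1+\cdots+n_{2k+1}=n+O(1)} \Box_n(\prod_i \Box_{n_i} u_i)$ together with the boundedness of $\Box_n$ on $\hat X^2_{0,b'}$ to reduce matters to estimating $\norm{\prod_i \Box_{n_i} \partial_x^{\alpha_i} u_i}_{\hat X^2_{0,b'}}$ pointwise in $(n_1,\ldots,n_{2k+1})$. The outer $\ell^p_n$-summation is then dealt with by Young's convolution inequality, after distributing the weight via $\JBX[n]^s \lesssim \max_i \JBX[n_i]^s$, yielding $\ell^p_{n_i}$ on the factor carrying the weight and $\ell^1_{n_i}$ on the others; the $\ell^1$-summability is paid for, where necessary, by the modulation-space Sobolev embedding~\eqref{eq:modulation-sobolev}.

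The remaining pointwise-in-$(n_1,\ldots,n_{2k+1})$ estimate is then attacked in the same spirit as Proposition~\ref{prop:quintic-estimate-hat-endpoint}: dyadically decompose further in $|\xi|$, order $|\xi_1| \ge \cdots \ge |\xi_{2k+1}|$, and split into cases by the number of high-frequency factors. In the case of at least four high-frequency factors, four of them are placed in $L^6_{xt}$ via the Fefferman-Stein estimate~\eqref{eq:fefferman-stein} at $r = 2$ (with its accompanying derivative gain), the remaining factors in $L^\infty_{xt}$ via the maximal-function estimate~\eqref{eq:max-fct}, and recombined by Hölder's inequality. In the cases of one, two or three high-frequency factors the gain is extracted instead from the bilinear operators $I^\pm_{2,j}$ and their duals $I^{\pm,*}_{2,j}$ acting on high-low pairs, in direct analogy to Cases~2 and~3 of Proposition~\ref{prop:quintic-estimate-hat-endpoint}; these cases carry slack and so impose no further restriction on~$s$.

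The main obstacle I expect is the joint bookkeeping of the derivative budget in the pointwise estimate together with the Sobolev losses incurred in the $\ell^p_{n_i} \to \ell^1_{n_i}$ trade: a careful accounting should show that the Fefferman-Stein gain distributed over four factors contributes the $\tfrac{1}{4k}$-term to the regularity threshold, while the $-\tfrac{2k+1}{2kp}$-term reflects the combined Sobolev losses over all $2k+1$ factors. Configurations with intermediate-magnitude frequencies, which are neither clearly maximal nor clearly negligible, require the most care since the derivative-to-weight allocation across factors is not dictated by the geometry of the frequency support. As an auxiliary simplification one may interpolate between the easy $p = 2$ endpoint (where the estimate is essentially the $L^2$-based $X_{s,b}$ quintic estimate at $s > 0$) and the more restrictive $p = \infty$ endpoint, to cover the intermediate range cleanly.
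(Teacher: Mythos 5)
There is a genuine gap in the core of your argument. For the case of four or more comparable high frequencies you propose to extract the derivative gain from the Fefferman--Stein estimate~\eqref{eq:fefferman-stein} at $r=2$. But that estimate yields a total gain of only $\frac{2(j-1)}{r}$ derivatives spread over the four factors (placed in $L^{4r}_{xt}$, not $L^6_{xt}$ as you write), which at $r=2$ is $j-1$; meanwhile the product carries $2j-k$ derivatives plus the output weight $\JBX[\xi]^s$, all of which must be absorbed by the high-frequency factors since the low-frequency factors contribute essentially nothing at $s\approx\1{2}$. For small $k$ (say $k=2$) and large $j$ this forces $s\gtrsim j/3$, far above the claimed threshold $\1{2}+\1{4k}-\frac{2k+1}{2kp}$. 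This degeneration of the Fefferman--Stein argument as $r\to 2$ is precisely why Proposition~\ref{prop:quintic-estimate-hat-endpoint} is stated only for $r$ near $1$, and why the paper's proof of the present proposition abandons that toolkit entirely: it uses the trivial embedding $X^p_{s,b'}\supset X_{s,b'}$, then Kato's smoothing estimate~\eqref{eq:kato2} applied in both dual and direct form to the highest-frequency factor (or to the top two factors when $|\xi|\ll|\xi_1|\sim|\xi_2|$), which gains nearly $2j-1$ derivatives on a single factor, together with the maximal function estimate~\eqref{eq:max-fct} on the remaining $2k$ factors, and finally the modulation-space Sobolev embedding~\eqref{eq:modulation-sobolev} to return to $X^p_{0,b}$ --- the latter being the sole source of the $p$-dependence in the threshold. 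Your heuristic for the $\1{4k}$-term is accordingly misplaced: it arises from the cost $\1{2}-\1{2kr'}$ of the maximal function estimate on each of the $2k$ low-frequency factors and the redistribution of the surplus smoothing, not from a Fefferman--Stein gain. Your suggested interpolation between $p=2$ and $p=\infty$ is sound in principle (the threshold is affine in $\1{p}$), but it does not help, since the $p=2$ endpoint at $s>0$ is not a known estimate one can cite --- it is exactly the hard content that the Kato-smoothing argument supplies.

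A secondary but real error: the $X^p_{s,b}$ norm is \emph{not} invariant under complex conjugation. Conjugation sends $\widehat{u}(\xi,\tau)$ to $\overline{\widehat{u}(-\xi,-\tau)}$, so the weight $\JBX[\tau-\xi^{2j}]^b$ becomes $\JBX[\tau+\xi^{2j}]^b$; it is not merely a relabelling $\Box_n\mapsto\Box_{-n}$. The correct justification for the arbitrariness of the conjugate distribution --- and the one the paper gives --- is that in the proof every factor is ultimately measured in a mixed $L^p_xL^q_t$ norm, which is genuinely conjugation-invariant, before the linear estimates~\eqref{eq:kato1}, \eqref{eq:kato2}, \eqref{eq:max-fct} are applied. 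Your conclusion is right, but only because the proof happens to route every factor through such norms, which your $\Box_n$-by-$\Box_n$ reduction does not by itself guarantee.
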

\begin{proof}
We will assume, without loss of generality by symmetry, that the frequencies of the factors in the product are order decreasingly, i.e. $|\xi_1| \ge |\xi_2| \ge \ldots \ge |\xi_{2k+1}|$. Depending on if the largest frequency of one of the factors is comparable (or not) to the frequency of the product we differentiate between two cases.

The reader may note that in both cases each factor passes through a mixed $L^p_xL^q_t$ which is invariant with respect to complex conjugation. This justifies the addition to the theorem, that the distribution of complex conjugates may be chosen arbitrarily.

Idea of the proof is to use reduce the proof to the $L^2$ case, where Kato smoothing for two of the `factors' and the maximal function estimate for the rest is used, and to then use a Sobolev-type embedding to get back to the correct modulation space. The latter is what leads to the restriction on $s$, i.e. in the $L^2$ case we reach scaling up to an epsilon.
\begin{enumerate}[wide]\itemsep1em
    \item $|\xi| \sim |\xi_1|$. Since $u_1$ is the factor with the largest frequency, comparable with the product itself, we may redistribute all derivatives in the product accordingly. In the same step we use the trivial embedding $X_{s, b'}^p \supset X_{s, b'}^2$, for $p \ge 2$, and introduce $\sigma \ge 0$ to be choosen later as well as $r = \infty-$.
    \begin{align}
        &\norm{\prod_{i=1}^{2k+1} \partial_x^{\alpha_i}u_i}_{X_{s, b'}^p} \lesssim \norm{(J^{s+2j-k - \frac{2j-1}{2}(1-\frac{2}{r}) + \sigma} u_1) \prod_{i=2}^{2k+1} J^{-\frac{\sigma}{2k}}u_i}_{X_{\frac{2j-1}{2}(1-\frac{2}{r}), b'}}
        \intertext{Now we may use the modulation with exponent $b'=-\1{2}+$ by applying the dual version of Kato's smoothing estimate~\eqref{eq:kato2}, followed by an application of Hölder's inequality.}
        &\lesssim \norm{(J^{s+2j-k - \frac{2j-1}{2}(1-\frac{2}{r})+\sigma} u_1) \prod_{i=2}^{2k+1} J^{-\frac{\sigma}{2k}}u_i}_{L^{r'}_x L^2_t}\\
        &\lesssim \norm{J^{s+2j-k - \frac{2j-1}{2}(1-\frac{2}{r})+\sigma} u_1}_{L^\infty_xL^2_t} \prod_{i=2}^{2k+1} \norm{J^{-\frac{\sigma}{2k}}u_i}_{L^{2kr'}_x L^\infty_t}
        \intertext{Another application of Kato's smoothing inequality~\eqref{eq:kato2} for the first factor and the maximal function estimate~\eqref{eq:max-fct} leads us to:}
        &\lesssim \norm{J^{s+2j-k - \frac{2j-1}{2}(2-\frac{2}{r})+\sigma} u_1}_{X_{0,b}} \prod_{i=2}^{2k+1} \norm{J^{-\frac{\sigma}{2k}+\1{2} - \1{2kr'}+}u_i}_{X_{0,b}}\\
        &\lesssim \norm{J^{s+2j-k - \frac{2j-1}{2}(2-\frac{2}{r})+\sigma+\1{2}-\1{p}+} u_1}_{X_{0,b}^p} \prod_{i=2}^{2k+1} \norm{J^{-\frac{\sigma}{2k}+\1{2} - \1{2kr'}+\1{2}-\1{p}+}u_i}_{X_{0,b}^p}
    \end{align}
    where we have applied the Sobolev-type embedding~\eqref{eq:modulation-sobolev} to each of the factors. This product as a whole may be bounded by our desired right hand side in~\eqref{eq:quintic-estimate-modulation} on condition that
    \begin{align}
        &2j-k - \frac{2j-1}{2}(2-\frac{2}{r})+\sigma+\1{2}-\1{p} < 0 \qquad\text{and}\\
        &-\frac{\sigma}{2k}+\1{2} - \1{2kr'}+\1{2}-\1{p} < s.
    \end{align}
    We leave it to the reader to verify that, so long as $s > \1{2} + \1{4k} - \frac{2k+1}{2kp}$, these inequalities hold, if one chooses $\sigma = k - \frac{3}{2} + \1{p}-$ which clearly also fulfils $\sigma \ge 0$.

    \item $|\xi| \ll |\xi_1|$ so that we must have $|\xi_1| \sim |\xi_2|$. The proof in this case is similar in spirit to the preceding case, only that, since the frequency of the product is small, it is more beneficial to apply Kato's smoothing inequality to the first two factors.

    After redistributing derivatives beneficially and moving to $L^2$-based Bourgain spaces as above, we use the modulation of the product (with exponent $b' = -\1{2}+$) for a Sobolev embedding in time. In the space variable we also sacrifice a total of $\1{2}-$ derivatives for a Sobolev embedding to $L^{1+}$.
    \begin{align}
        &\norm{\prod_{i=1}^{2k+1} \partial_x^{\alpha_i}u_i}_{X_{s, b'}^p} \lesssim \norm{(J^{\frac{s}{2} + \sigma_1 + \1{4}-}u_1)(J^{\frac{s}{2} + \sigma_1 + \1{4}-}u_2) \prod_{i=3}^{2k+1} J^{\sigma_2}u_i}_{L^{1+}_{xt}}
        \intertext{Here we have introduced $\sigma_1 \ge 0$ and $\sigma_2 \le 0$ which are to be chosen later under the constraint $2\sigma_1 + (2k-1)\sigma_2 = 2j-k$. Next we may apply Hölder's inequality in preparation for applications of Kato smoothing~\eqref{eq:kato1} for the first two factors. For the remaining factors one has to be careful: Either one can apply the maximal function estimate~\eqref{eq:max-fct} if one has enough factors, that is $k > 3$, or one resorts to using a Sobolev embedding which works just as well for $k=2$ or $k=3$.}
        &\lesssim \norm{J^{\frac{s}{2} + \sigma_1 + \1{4}-}u_1}_{L^\infty_xL^{2+}_t}\norm{J^{\frac{s}{2} + \sigma_1 + \1{4}-}u_2}_{L^\infty_xL^{2+}_t} \prod_{i=3}^{2k+1} \norm{J^{\sigma_2}u_i}_{L^r_xL^\infty_t}\\
        &\lesssim \norm{J^{\frac{s}{2} + \sigma_1 + \1{4}+\1{2} - \frac{2j}{2+}-}u_1}_{X_{0,b}}\norm{J^{\frac{s}{2} + \sigma_1 + \1{4}+\1{2} - \frac{2j}{2+}-}u_2}_{X_{0,b}} \prod_{i=3}^{2k+1} \norm{J^{\sigma_2+\1{2}-\1{r}+}u_i}_{X_{0,b}}\notag
    \end{align}
    Here we have introduced $r$ such that $\frac{2k}{r} = \1{1+} = 1-$ in an intermediate step. This final product may again be bounded by our desired right hand side in~\eqref{eq:quintic-estimate-modulation} after an application of the Sobolev-type embedding for modulation spaces~\eqref{eq:modulation-sobolev}, if the following conditions are met:
    \begin{equation}
        \sigma_1 + \1{4}+\1{2} - \frac{2j}{2} +\1{2} - \1{p} < \frac{s}{2} \qquad\text{and}\qquad \sigma_2+\1{2}-\1{r} + \1{2}-\1{p} < s.
    \end{equation}
    By choosing
    \begin{equation}
        \sigma_1 = j -\frac{3}{4} - \frac{2k-1}{8k} + \frac{2k-1}{4kp} \qquad\text{and}\qquad \sigma_2 = -\1{2} + \1{4k} - \1{2kp} + \1{2k-1}
    \end{equation}
    one may verify that these conditions (and those placed upon $\sigma_1$ and $\sigma_2$) are met so long as $s > \1{2} + \1{4k} - \frac{2k+1}{2kp}$ and thus the proof is complete.
\end{enumerate}
\end{proof}
From Propositions~\ref{prop:cubic-estimate-modulation} and~\ref{prop:quintic-estimate-modulation}, possibly also using the gauge-transformation, the well-posedness Theorems we mentioned at the beginning of this section are now immediate from general theory on $X^p_{s,b}$ spaces. See~\cite{Adams2024}*{Section~1.2} for references on this matter.

\quad

Moving back to estimates in Fourier-Lebesgue-based spaces, we may now use the $L^2$-based (that is $p=2$) estimate that is contained in Proposition~\ref{prop:quintic-estimate-modulation} and interpolate (by the complex multilinear interpolation method) with the near-endpoint estimate from Proposition~\ref{prop:quintic-estimate-hat-endpoint} in order to cover the full parameter range $1 < r \le 2$ that is necessary to argue our well-posedness Theorems in such spaces.
\begin{corollary}\label{cor:quintic-estimate-hat}
    Let $j \ge 2$, $1 < r \le 2$, $2 \le k \le 2j$, $s > \1{r}-\1{2}$ and $\alpha \in \N_0^{2k+1}$ with $|\alpha| = 2j-k$. Then there exist $b' < 0 < \frac{1}{r} < b < b' + 1$ such that the following estimate holds
    \begin{equation}
    \norm{\partial_x^{\alpha_1}u_1\prod_{i=1}^k \partial_x^{\alpha_{2i}}\overline{u_{2i}} \partial_x^{\alpha_{2i+1}}u_{2i+1}}_{\hat X_{s, b'}^r} \lesssim \prod_{i=1}^{2k+1} \norm{u_i}_{\hat X_{s, b}^r}.
    \end{equation}
\end{corollary}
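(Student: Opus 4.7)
The plan is to obtain the corollary by complex multilinear interpolation, as anticipated in the paragraph immediately preceding the statement. Both endpoints required as input are already at hand: the $p=2$ case of Proposition~\ref{prop:quintic-estimate-modulation}, which supplies the estimate at $r=2$, and Proposition~\ref{prop:quintic-estimate-hat-endpoint}, which supplies the estimate in a neighbourhood of $r=1$.

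At $r=2$ the threshold in Proposition~\ref{prop:quintic-estimate-modulation} reduces to $s>\1{2}+\1{4k}-\frac{2k+1}{4k}=0$, which coincides with $\1{r}-\1{2}=0$. Since $X^2_{s,b}$ and $\hat X^2_{s,b}$ are equivalent (each block $\Box_n f$ has Fourier support in a unit interval, so $\JBX[n]^s \sim \JBX[\xi]^s$ there, and the outer $\ell^2_n$ combines with the inner $L^2_{xt}$ via Plancherel to recover the classical Bourgain norm), the $p=2$ estimate translates directly into the corollary at $r=2$.

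For a target pair $(r,s)$ with $1<r<2$ and $s>\1{r}-\1{2}$, I would fix $r_0\in(1,r)$ close to $1$ and $s_0>\1{2}+\frac{j-k}{kr_0'}$, and then apply the Stein--Calder\'on complex multilinear interpolation theorem to the $(2k+1)$-linear operator
\begin{equation*}
(u_1,\ldots,u_{2k+1}) \mapsto \partial_x^{\alpha_1}u_1\prod_{i=1}^{k}\partial_x^{\alpha_{2i}}\overline{u_{2i}}\partial_x^{\alpha_{2i+1}}u_{2i+1},
\end{equation*}
viewed as a map between the weighted mixed-norm Lebesgue spaces realising the $\hat X^r_{s,b}$ and $\hat X^r_{s,b'}$ norms. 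With $\theta = (\1{r_0}-\1{r})/(\1{r_0}-\1{2})\in[0,1]$ this yields an estimate at $(r, s_\theta)$ with $s_\theta=(1-\theta)s_0+\theta s_1$ for any $s_1>0$. The modulation parameters $b'$ and $b$ can be chosen compatibly across the interpolation by shrinking $b'$ toward $0$ and enlarging $b$ slightly at both endpoints, since the endpoint propositions allow such freedom.

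It remains to push $s_\theta$ below any prescribed $s > \1{r}-\1{2}$ by an appropriate choice of $r_0$. As $r_0 \to 1^+$ one has $\1{r_0}-\1{2}\to\1{2}$ and $\frac{j-k}{kr_0'}\to 0$, so taking $s_0$ and $s_1$ to their thresholds gives
\begin{equation*}
s_\theta \;\longrightarrow\; (1-\theta)\paren{\1{2}+\frac{j-k}{kr_0'}} \;\longrightarrow\; \frac{\1{r}-\1{2}}{\1{2}}\cdot\1{2} \;=\; \1{r}-\1{2}.
\end{equation*}
Thus for any $s$ strictly above $\1{r}-\1{2}$, a sufficiently small choice of $r_0-1$ (depending on the gap $s-(\1{r}-\1{2})$) places $(r,s)$ inside the interpolation region and produces the claimed bound. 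The one piece of non-routine work is the analytic setup of the $(2k+1)$-linear family so that the complex interpolation theorem applies on the weighted $L^{r'}$-Bourgain spaces, with the analytic parameter controlling both the weight exponent $s$ and the integrability exponent $r$; this is a direct adaptation of the scalar Calder\'on scheme and I do not expect any genuine obstruction there.
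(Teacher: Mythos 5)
Your proposal is correct and follows exactly the route the paper itself takes: the corollary is obtained by complex multilinear interpolation between the $p=2$ case of Proposition~\ref{prop:quintic-estimate-modulation} (whose threshold indeed collapses to $s>0$ at $r=2$) and the near-endpoint estimate of Proposition~\ref{prop:quintic-estimate-hat-endpoint}, with the excess over the scaling line $\1{r}-\1{2}$ vanishing as $r_0\to 1^+$ precisely as you compute. The paper states this in one sentence without carrying out the convexity calculation, so your write-up simply supplies the details it leaves implicit.
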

The Theorems mentioned at the beginning of this section regarding well-pos\-ed\-ness in Fourier-Lebesgue spaces may now be derived from Proposition~\ref{prop:cubic-estimate-hat} and Corollary~\ref{cor:quintic-estimate-hat}, possibly in combination with use of the gauge-transformation, with standard theory on $\hat X^r_{s,b}$ spaces. See~\cite{Adams2024}*{Section~1.2} for references on this matter.

\section{Proofs of ill-posedness results}\label{sec:illposedness}

With our well-posedness results established, we now proceed to demonstrate that these results are, in a certain sense, optimal. Specifically, the following arguments will prove Theorems~\ref{thm:illposed-line-C3-hat} and~\ref{thm:illposed-line-C3-modulation}, showing that it is impossible to achieve well-posedness for the equations of interest below the regularity threshold we have already identified using the direct application of the contraction mapping theorem. Additionally, we will show that for periodic initial data, achieving analogous results to those in the nonperiodic case from the previous sections is also unfeasible with the contraction mapping principle.

The argument we use was initially investigated in~\cite{Bourgain1997} and then later refined in~\cite{MR1735881}. By now it has found widespread use to show ill-posedness results for power-type nonlinearities appearing in a wide variety of dispersive equations.

\begin{proof}[Proof of Theorem~\ref{thm:illposed-line-C3-hat}]
    Let us assume that the flow $S : \hat H^s_r(\R) \times (-T, T) \to \hat H^s_r(\R)$ of the Cauchy problem~\eqref{eq:general-cauchy} for a general nonlinearity $N(u)$ containing a cubic term with $2j-1$ derivatives is thrice continuously differentiable. (See the discussion in Remark~\ref{rem:C3-illposedness-line} for what `general nonlinearity' means.) We will as a necessary condition on the regularity of the initial data that $s \ge \1{2} + \frac{j-1}{r'}$.

    For initial datum $u_0(x) = \delta \phi(x)$, where $\delta > 0$ and $\phi \in \hat H^s_r(\R)$ for any $1 \le r \le \infty$ and $s \in \R$ are to be chosen later, we calculate the third derivative of the flow at the origin. Let $u$ denote the solution corresponding to $u_0$ as initial data, then
    \begin{equation}\label{eq:flow-third-derivative}
        \left. \frac{\partial^3 u}{\partial \delta^3} \right|_{\delta = 0} \sim \int_0^t U(t-t') N_3(U(t')u_0)\d{t'},
    \end{equation}
    where we use $U(t)$ to denote the linear propagator of our equation and $N_3(u)$ to refer only to the cubic nonlinear terms in the nonlinearity of our equation. The higher-order nonlinear terms disappear from the third derivative of the flow, because we are evaluating it at $\delta = 0$.

    For our choice of initial data we now introduce parameters $N \gg 1$ and $\gamma \ll 1$ that are to be chosen later. With these in hand we may set $\hat{\phi}(\xi) = \gamma^{-\1{r'}}N^{-s} \chi(\xi)$, where $\chi(\xi)$ is the characteristic function of the interval $[N,N+\gamma]$. The factors in the definition of $\phi$ are chosen such that we have $\norm{\phi}_{\hat H^s_r} \sim 1$.

    Our next step is inserting our initial datum $u_0$ into~\eqref{eq:flow-third-derivative}:
    \begin{align*}
        \F_x \left(\left.\frac{\partial^3 u}{\partial \delta^3} \right|_{\delta = 0}\right)(\xi, t) &\sim \xi^{2j-1} \int_{*} \int_0^t e^{it(-\xi^{2j} + \xi_1^{2j} - \xi_2^{2j} + \xi_3^{2j})} \hat{\phi}(\xi_1) \hat{\overline\phi}(\xi_2)\hat{\phi}(\xi_3) \d{t}\d{\xi_1}\d{\xi_2}
        \intertext{In order to properly bound the inner $t$-integral we must have control of the resonance relation $\Phi = -\xi^{2j} + \xi_1^{2j} - \xi_2^{2j} + \xi_3^{2j}$ of which Lemma~\ref{lem:modulation-estimate} tells us that we may bound it by $\Phi \sim \gamma^2 N^{2j-2}$. Hence we see a choice of $\gamma \sim N^{-(j-1)}$ is sensible. We continue working on a lower bound:}
        \\&\gtrsim t N^{-3s} \gamma^{-\frac{3}{r'}} N^{2j-1} \,\chi*\chi*\chi(\xi)
        \\&\gtrsim t N^{-3s} \gamma^{-\frac{3}{r'}} N^{2j-1} \gamma^2 \,\chi(\xi)
        \\&\sim t N^{-2s+2j-1} \gamma^{2-\frac{2}{r'}} (\gamma^{-\1{r'}} N^{-2} \chi(\xi)) = t N^{-2s+\frac{2j-2}{r'}+1}  \hat{\phi}(\xi).
    \end{align*}
    Here we may now take the the $\hat H^s_r(\R)$ norm of both sides, keeping in mind our choice of $\phi$ leading to $\norm{\phi}_{\hat H^s_r} \sim 1$. Thus we have a lower bound on the third derivative of the flow
    \begin{equation}
        \norm{\left.\frac{\partial^3 u}{\partial \delta^3} \right|_{\delta = 0}}_{\hat H^s_r} \gtrsim t N^{-2s+\frac{2j-2}{r'}+1}.
    \end{equation}
    In order for this quantity to stay bounded (a necessity, if the flow shall be thrice continuously differentiable) we must have $-2s+\frac{2j-2}{r'}+1 \le 0 \iff s \ge \1{2}+\frac{j-1}{r'}$, since otherwise we can let $N \to \infty$ and thus produce a contradiction.
\end{proof}

We will omit the proof of Theorem~\ref{thm:illposed-line-C3-modulation} as it follows along the same lines as the $r=2$ case in the preceding proof. The key insight to be had is, because it suffices the look at the high-high-high interaction, with frequencies located on a single interval of length $o(1)$, the exact choice of Hölder exponents $p,q$ in the modulation spaces is irrelevant. This argument was also given in~\cite{Klaus2023}. Hence the $C^3$ ill-posedness result in modulation spaces parallels the $r=2$ case in Fourier-Lebesgue spaces in terms of regularity ($s < \frac{j}{2}$), but with arbitrary exponents $p,q$.


Having addressed the non-periodic setting, we now present two propositions that establish our ill-posedness results for gauged dNLS equations on the torus. Their proofs follow arguments well-known to the relevant literature and correspond to Theorems~\ref{thm:illposed-torus-C3} and~\ref{thm:illposed-torus-uniform}, respectively.

\begin{proposition}\label{prop:illposed-torus-C3}
    The flow $S : \hat H^s_r(\T) \times (-T, T) \to \hat H^s_r(\T)$ of the Cauchy problem for the fourth-order dNLS hierarchy equation (which corresponds to $j = 2$)
    \begin{equation*}
        i\partial_t u - \partial_x^4 u = \partial_x(-iu^2\overline{u}_{xx} - 4i|u|^2u_{xx} - 2i|u_x|^2u - 3iu_x^2\overline{u} - \frac{15}{2}|u|^4u_x + \frac{5i}{2}|u|^6u)
    \end{equation*}
    cannot be thrice continuously differentiable for any $1 \le r \le \infty$ and $s \in\R$.
\end{proposition}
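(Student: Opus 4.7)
I would follow the standard Bourgain argument, see~\cite{Bourgain1997,MR1735881}, along the lines of the proof of Theorem~\ref{thm:illposed-line-C3-hat}, but with a test function exploiting the zero mode on the torus so that the resulting interaction is bad enough to preclude third differentiability at any regularity $s \in \R$. Assuming for contradiction that $S$ is $C^3$ at the origin, for initial data $u_0 = \delta \phi$ the third derivative in $\delta$ of the flow at $\delta = 0$ takes the Duhamel form
\begin{equation*}
    \left.\frac{\partial^3 u}{\partial \delta^3}\right|_{\delta=0}(t) \sim \int_0^t U(t-t') F_3(U(t')\phi) \d{t'},
\end{equation*}
where $U(t) = e^{-it\partial_x^4}$ is the linear propagator and $F_3$ collects the cubic part of the nonlinearity; the quintic and septic terms drop out under third differentiation at $\delta = 0$.

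The crucial choice is $\phi(x) = a + b e^{iNx}$ with $|a| = 1$, $|b| = N^{-s}$ and $N \in \N$ large. This yields $\norm{\phi}_{\hat H^s_r(\T)} \sim 1$ uniformly in $N$ for every $s \in \R$ and every $1 \le r \le \infty$. Since the zero mode is preserved by $U(t')$, we have $U(t')\phi = a + b e^{iNx} e^{-it' N^4}$. The key observation is that the cubic term $|u|^2 u_{xx}$ produces, via the fully resonant triple $(\xi_1, \xi_2, \xi_3) = (0, 0, N)$ (for which the modulation $\xi^4 - \xi_1^4 + \xi_2^4 - \xi_3^4$ vanishes identically), an output at frequency $\xi = N$ of size $\sim N^3 |a|^2 |b|$: the two zero-mode factors give $|a|^2$, the factor $u_{xx}$ contributes $N^2$ on the high-frequency mode, and the outer $\partial_x$ contributes a further factor of $N$; resonance makes the $t'$-integration trivial and yields a factor of $t$.

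The other three cubic terms $u^2 \overline u_{xx}$, $|u_x|^2 u$, $u_x^2 \overline u$ cannot contribute at this leading order, since each of them requires a derivative on at least one factor that would have to be the zero mode in the $(0, 0, N)$ triple---and derivatives of constants vanish. Consequently, their contributions at output frequency $N$ involve only triples with three high-frequency factors and are thus of strictly lower order $O(N^3 |b|^3)$. A direct expansion of $F_3(U(t')\phi)$ mode by mode confirms that the Fourier coefficient of the third derivative at frequency $N$ has magnitude $\gtrsim tN^3 |a|^2 |b|$, with no cancellation at this order.

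Hence $\norm{\partial_\delta^3 u|_{\delta=0}(t)}_{\hat H^s_r(\T)} \gtrsim \JBX[N]^s \cdot tN^3|b| \sim tN^3$ for $N$ large, which blows up as $N \to \infty$ while $\norm{\phi}_{\hat H^s_r(\T)} \sim 1$ stays bounded; this contradicts the $C^3$ hypothesis. The main obstacle I anticipate is the algebraic book-keeping required to verify the non-cancellation claim among the four cubic contributions at the leading $|a|^2|b|$ order; but as outlined, the $(0, 0, N)$ triple forces the available derivatives onto the single non-zero-mode factor, so only $|u|^2 u_{xx}$ contributes at that order and no cancellation is possible.
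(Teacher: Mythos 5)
Your proposal is correct and follows essentially the same route as the paper: Bourgain's $C^3$ argument with a two-mode datum and a fully resonant cubic interaction producing an output of size $tN^3$ at frequency $N$ (the paper takes $\hat\phi(k) = k^{-s}(\delta_{k,N}+\delta_{k,N_0})$ and reads off the constellations $(N,N_0,N_0)$ and $(N,-N,-N)$ from the explicit symbol $n_3$, whereas your zero-mode choice isolates the single contributing term $-4i|u|^2u_{xx}$ somewhat more cleanly). One small repair for $s\le 0$: there $|b|=N^{-s}\ge 1$, so the $(N,N,N)$ contributions of size $N^3|b|^3$ are \emph{not} of lower order as you claim; but they are likewise fully resonant and enter the frequency-$N$ coefficient as $tN^3\,b\,(\alpha|a|^2+\beta|b|^2)$ with $\alpha\neq 0$, so for each fixed $s$ a suitable choice of $|a|$ guarantees non-vanishing for large $N$ and the lower bound $\gtrsim tN^3$ persists.
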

\begin{proof}
The proof of this proposition works similarly to the one given by the author in~\cite{Adams2024}*{Proposition~6.3}, which in turn was based on an argument by Bourgain~\cite{Bourgain1997}.

In the present setting we may observe, that the symbol of the cubic nonlinearity in the fourth-order hierarchy equation can be written as
\begin{equation}\label{eq:fourth-order-symbol}
    n_3(k_1, k_2, k_3) = (k_1 + k_2 + k_3)(2k_1^2 + k_2^2 + 2k_3^2 + k_1k_2 + k_2k_3 + 3k_1k_3).
\end{equation}
Following along the details of~\cite{Adams2024}*{Proposition~6.3}, i.e. differentiating the flow thrice (with respect to $\delta$) with initial data $\delta\phi(x)$, where $\delta > 0$ and $\hat{\phi}(k) = k^{-s}(\delta_{k, N} + \delta_{k, N_0})$ and looking for a lower bound on the $H^s(\T)$ norm of this third derivative, one arrives at the same conclusion. Only for $(N, N_0, N_0)$ and $(N, -N, -N)$ (or appropriate permutations thereof) an overall frequency of $N$ is achieved. Inserting these constellations into~\eqref{eq:fourth-order-symbol} one may derive a lower bound of $N^s t N^{3-s}(1 + N^{-2s}) \gtrsim t N^3$ for the $H^s(\T)$ norm of the derivative. For $N \to \infty$ this diverges, so we know the flow cannot be thrice continuously differentiable. We leave working out further details to the reader.
\end{proof}

If one lowers the assumption on the regularity of the initial data, one is able to strengthen the form of ill-posedness that is derived to failure of uniform continuity using an argument originally developed in~\cite{KPV2001}.

\begin{proposition}\label{prop:illposed-torus-uniform}
    Let $j \in \N$, $1 \le r \le \infty$ and $s < j -\1{2}$. The flow $S : \hat H^s_r(\T) \times (-T, T) \to \hat H^s_r(\T)$ of the Cauchy problem
    \begin{equation}\label{eq:torus-illposed-pde}
        i\partial_t u + (-1)^{j+1}\partial_x^{2j} u = i u^2 \partial_x^{2j-1}\overline{u}
    \end{equation}
    cannot be uniformly continuous on bounded sets.
\end{proposition}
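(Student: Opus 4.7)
My plan is to use the classical argument of Kenig-Ponce-Vega~\cite{KPV2001}, which exploits an explicit two-parameter family of plane-wave solutions to produce, from initial data that is arbitrarily close, solutions whose $\hat H^s_r(\T)$-distance is bounded below at arbitrarily small times. The gauge-invariant form of the cubic nonlinearity $i u^2 \partial_x^{2j-1}\overline{u}$ in~\eqref{eq:torus-illposed-pde} is crucial: a direct substitution shows that for any $a \in \C$ and $k \in \Z$, the function
\begin{equation*}
u_{a,k}(x,t) = a\,e^{i(kx + \omega(a,k)t)}, \qquad \omega(a,k) = -k^{2j} + (-1)^j |a|^2 k^{2j-1},
\end{equation*}
solves~\eqref{eq:torus-illposed-pde}. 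Indeed, the product $u^2 \partial_x^{2j-1}\overline{u}$ is in-phase with $u$, so the cubic nonlinearity reduces to $(-1)^j i k^{2j-1}|a|^2 u$, and matching with the linear part fixes $\omega(a,k)$ as above.

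Since $u_{a,k}(\cdot,t)$ is a single Fourier mode of amplitude $|a|$ at frequency $k$, one immediately reads off
\begin{equation*}
\norm{u_{a,k}(\cdot,t)}_{\hat H^s_r(\T)} = |a|\,\JBX[k]^s
\end{equation*}
independently of $r$ and $t$. Given $s < j - \1{2}$, I fix $\gamma > 0$ with $\gamma < 2j-1-2s$ and, for $n \in \N$, set $k_n = n$, $a_n = n^{-s}$ and $a'_n = n^{-s} + n^{-s-\gamma}$. Then both initial data have $\hat H^s_r$-norm $\sim 1$, while
\begin{equation*}
\norm{u_{a_n,k_n}(\cdot,0) - u_{a'_n,k_n}(\cdot,0)}_{\hat H^s_r} = |a_n - a'_n|\,n^s = n^{-\gamma} \xrightarrow{n\to\infty} 0.
\end{equation*}

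The decisive estimate is on the phase-mismatch
\begin{equation*}
\Delta_n \coloneqq \omega(a'_n, n) - \omega(a_n, n) = (-1)^j(|a'_n|^2 - |a_n|^2)\,n^{2j-1} \sim n^{2j-1-2s-\gamma},
\end{equation*}
which by the choice of $\gamma$ tends to $+\infty$ as $n \to \infty$. Choosing $t_n > 0$ with $\Delta_n t_n = \pi$, one has $t_n \to 0^+$, so $t_n \in (0,T)$ for all $n$ large enough, and (using that $a_n, a'_n$ agree to leading order and are real positive)
\begin{equation*}
\norm{u_{a_n,k_n}(\cdot,t_n) - u_{a'_n,k_n}(\cdot,t_n)}_{\hat H^s_r} = |a_n e^{i\omega(a_n,n)t_n} - a'_n e^{i\omega(a'_n,n)t_n}|\,n^s \gtrsim n^{-s}\cdot n^s = 1.
\end{equation*}
This failure of continuous dependence on bounded subsets contradicts uniform continuity of the flow in $\hat H^s_r(\T)$.

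The only real subtlety—rather than a genuine obstacle—is bookkeeping: verifying that the implicit constants in the $\sim$-relations for $\Delta_n$ are uniform in $n$, that $t_n$ lies in the given interval, and that the lower bound at time $t_n$ is genuine (exploiting $|1-e^{-i\pi}| = 2$ and leading-order equality of $a_n$ and $a'_n$). Each of these is routine once the plane-wave ansatz is in place and $\gamma$ has been chosen strictly inside the open interval $(0, 2j-1-2s)$ afforded by the hypothesis $s < j - \1{2}$.
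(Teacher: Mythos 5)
Your argument is correct and is essentially the paper's own proof: the paper likewise invokes the Kenig--Ponce--Vega scheme with the explicit plane-wave family $u_{N,a}(x,t)=N^{-s}a\exp\bigl(i(Nx-N^{2j}t+N^{2j-1-2s}|a|^2t)\bigr)$, which coincides with your $u_{a,k}$ under the substitution $a\mapsto N^{-s}a$, $k=N$ (your dispersion relation, with the $(-1)^j$ factor, is in fact the more carefully computed one). You have simply written out the amplitude-perturbation and phase-decoherence details that the paper delegates to the cited references.
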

\begin{proof}
The proof of this proposition follows the same argument already given by the author for~\cite{Adams2024}*{Proposition~6.4}, so we will not repeat the details here. The only difference is that one must choose a different particular solution of~\eqref{eq:torus-illposed-pde}, which is a slightly modified (i.e. adapted to the dNLS hierarchy setting) version of~\cite{Adams2024}*{eq.~(6.5)}.
In particular it suffices to use the family of solutions
\begin{equation}
    u_{N, a}(x, t) = N^{-s} a \exp(i(Nx - N^{2j}t + N^{2j-1-2s}|a|^2 t))
\end{equation}
in this case.

Note that as all derivatives in this equation fall on $\overline{u}$ this is in fact a gauged dNLS equation. Changing the sign in front of the nonlinearity allows one to solve (using the same family $u_{N, a}$) the equation where all derivatives fall on $u$ instead.
\end{proof}

    \appendix
    \section{The first few dNLS hierarchy equations}\label{sec:appendix}

For future reference and the interested reader we would like to list the first few equations of the dNLS hierarchy and the resulting equations after they have been gauge transformed for the Schrödinger-like ones. A similar listing concerning the NLS hierarchy equations may be found in~\cite{Adams2024}*{Appendix~A}.

We will give the equations in terms of the potentials $q$ and $r$ as in the description of the hierarchy with a specific choice of $\alpha_n$ left to the reader (except for all other $\alpha_n$ being zero), as in Section~\ref{sec:dnls-derivation}. The usual identification $r = \pm \overline{q}$ leads to the well-known equations found elsewhere in the literature. For the non-gauge transformed equations we give them once with the nonlinearity as a total derivative, as in the representation \eqref{eq:dnls-hierarchy}, and again but with the derivative applied.

For the equations which have been adjusted with the gauge transform we use the convention $\alpha_{2j-1}=\alpha_n=2^{2j-1}$ which has been in use throughout the rest of the text as well.
\pagebreak
\begin{enumerate}[wide]
\item $n=0$. transport equation $q_t = \alpha_0 q_x$

\item $n=1$. classic dNLS equation
\begin{align*}
q_t = \frac{i\alpha_1}{2}(q_{xx} + \partial_x(-iq^2r)) = \frac{i\alpha_1}{2}(q_{xx} -2i qq_xr -iq^2r_x)
\end{align*}
After gauge transformation:
\begin{align*}
    iq_t + q_{xx} = -iq^2r_x -\frac{1}{2}q^3r^2
\end{align*}

\item $n=2$.
\begin{align*}
	q_t & = -\frac{\alpha_2}{4}(q_{xxx} + \partial_x(-3i qq_xr -\frac{3}{2} q^3r^2))                         \\
	    & = -\frac{\alpha_2}{4}(q_{xxx} -3iq_x^2r -3iqq_xr_x -3iqq_{xx}r -3q^3rr_x - \frac{9}{2}q^2q_xr^2)
\end{align*}

\item $n=3$. fourth order dNLS equation
\begin{align*}
	q_t & = -\frac{i\alpha_3}{8}(q_{xxxx} +  \partial_x(-iq^2r_{xx} -4i qq_{xx}r -2i qq_xr_x - 3i q_x^2r        \\
	    &                                    -\frac{15}{2}q^2q_xr^2 + \frac{5i}{2} q^4r^3))                     \\
	    & = -\frac{i\alpha_3}{8}(q_{xxxx}    -iq^2r_{xxx} -4iqq_{xxx}r -4iqq_xr_{xx} -6iqq_{xx}r_x              \\
	    &                                    -10iq_xq_{xx}r -5iq_x^2r_x -\frac{15}{2}q^2q_{xx}r^2 -15q^2q_xrr_x \\
	    &                                    -15qq_x^2r^2 +\frac{15i}{2}q^4r2r_x + 10iq^3q_xr^3)
\end{align*}
After gauge transformation:
\begin{align*}
	iq_t - q_{xxxx} & = iq^2r_{xxx} +2iqq_xr_{xx} +4iqq_{xx}r_x +3iq_x^2r_x +q^3rr_{xx}                                                          \\
	                & +\frac{5}{2}q^2q_{xx}r^2 -\frac{1}{2}q^3r_x^2 +4q^2q_xrr_x +\frac{5}{2}qq_x^2r^2 +\frac{3i}{2}q^4r^2r_x +\frac{3}{8}q^5r^4
\end{align*}

\item $n=4$.
\begin{align*}
	q_t & = \frac{\alpha_4}{16}(q_{xxxxx} +\partial_x(-5iqq_{xxx}r -5iqq_xr_{xx} -5iqq_{xx}r_x -10iq_xq_{xx}r                               \\
	    & -5iq_x^2r_x -5q^3rr_{xx} -\frac{25}{2}q^2q_{xx}r^2 -\frac{5}{2}q^3r_x^2 -15 q^2q_xrr_x -\frac{35}{2}qq_x^2r^2                     \\
	    & +\frac{35i}{2}q^3q_xr^3 +\frac{35}{8}q^5r^4))                                                                                     \\
	    & = \frac{\alpha_4}{16}(q_{xxxxx} -5i qq_{xxxx}r -5iqq_xr_{xxx} -10iqq_{xxx}r_x -10iqq_{xx}r_{xx} -10iq_x^2r_{xx}                   \\
	    & -10iq_{xx}^2r -25iq_xq_{xx}r_{x} -15q_xq_{xxx}r -5q^3rr_{xxx} -\frac{25}{2}q^2q_{xxx}r^2 -10q^3r_xr_{xx}                          \\
	    & -30q^2q_xrr_{xx} -40q^2q_{xx}rr_x -60qq_xq_{xx}r^2 -\frac{45}{2}q^2q_xr_x^2 -65qq_x^2rr_x -\frac{35}{2}q_x^3r^2                   \\
	    & +\frac{35i}{2}q^3q_{xx}r^3 +\frac{105i}{2}q^3q_xr^2r_x +\frac{35}{2}q^5r^3r_x +\frac{105i}{2}q^2q_x^2r^3 +\frac{175}{8}q^4q_xr^4)
\end{align*}

\item $n=5$. sixth order dNLS equation
\begin{align*}
	q_t & = \frac{i\alpha_5}{32}(q_{xxxxxx} +  \partial_x(-iq^2r_{xxxx} -6iqq_{xxxx}r -4iqq_xr_{xxx} -9iqq_{xxx}r_x                   \\
	    & -15iq_xq_{xxx}r -11iqq_{xx}r_{xx} -10iq_x^2r_{xx} -10iq_{xx}^2r -25q_xq_xxr_x -\frac{35}{2}q^2q_{xxx}r^2                    \\
	    & -35q^2q_xrr_{xx} -35q^2q_{xx}rr_x -70qq_xq_{xx}r^2 -\frac{35}{2}q^2q_xr_x^2 -70qq_x^2rr_x -\frac{35}{2}q_x^3r^2             \\
	    & +\frac{35i}{2}q^4r^2r_{xx} +35iq^3q_{xx}r^3 +\frac{35i}{2}q^4rr_x^2 +70iq^3q_xr^2r_x +70iq^2q_x^2r^3 \frac{315}{8}q^4q_xr^4 \\
	    & -\frac{63i}{8}q^6r^5))                                                                                                      \\
	    & = \frac{i\alpha_5}{32}(q_{xxxxxx} -iq^2r_{xxxxx} -6iqq_{xxxxx}r -6iqq_xr_{xxxx} -15iqq_{xxxx}r_x                            \\
	    & -21iq_xq_{xxxx}r -15iqq_{xx}r_{xxx} -14iq_x^2r_{xxx} -20iqq_{xxx}r_{xx} -35iq_{xx}q_{xxx}r                                   \\
	    & -49iq_xq_{xxx}r_x -56iq_xq_{xx}r_{xx} -35iq_{xx}^2r_x -\frac{35}{2}q^2q_{xxxx}r^2 -35q^2q_xr_{xxx}r                         \\
	    & -70q^2q_{xxx}rr_x -105qq_xq_{xxx}r^2 -70q^2q_{xx}rr_{xx} -70q^2q_xr_xr_{xx} -140qq_x^2rr_{xx}                               \\
	    & -70qq_{xx}^2r^2 -\frac{105}{2}q^2q_{xx}r_x^2 -350qq_xq_{xx}rr_x -\frac{245}{2}q_x^2q_{xx}r^2 -105qq_x^2r_x^2                \\
	    & -105q_x^3rr_x +\frac{35i}{2}q^4r^2r_{xxx} +35iq^3q_{xxx}r^3 +70iq^4rr_xr_{xx} +140iq^3q_xr^2r_{xx}                          \\
	    & +175iq^3q_{xx}r^2r_x +245iq^2q_xq_{xx}r^3 +\frac{35i}{2}q^4r_x^3 +210iq^3q_xrr_x^2 +420iq^2q_x^2r^2r_x                      \\
	    & +140iqq_x^3r^3 +\frac{315}{8}q^4q_{xx}r^4 +\frac{315}{2}q^4q_xr^3r_x \frac{315}{2}q^3q_x^2r^4 -\frac{315i}{8}q^6r^4r_x      \\
	    & -\frac{189i}{4}q^5q_xr^5)
\end{align*}
After gauge transformation:
\begin{align*}
iq_t + \partial_x^6 q_{} & =
-iq^2r_{xxxxx}
-4iqq_xr_{xxxx}
-6iqq_{xxxx}r_x
-11iqq_{xx}r_{xxx}
-10iq_x^2r_{xxx}\\&
-9iqq_{xxx}r_{xx}
-15iq_xq_{xxx}r_x
-25iq_xq_{xx}r_{xx}
-10iq_{xx}^2r_x
-q^3rr_{xxxx}\\&
-\frac{7}{2}q^2q_{xxxx}r^2
+q^3r_xr_{xxx}
-8q^2q_xrr_{xxx}
-13q^2q_{xxx}rr_x
-14qq_xq_{xxx}r^2\\&
-\frac{1}{2}q^3r_{xx}^2
-17q^2q_{xx}rr_{xx}
-9q^2q_xr_xr_{xx}
-22qq_x^2rr_{xx}
-\frac{21}{2}qq_{xx}^2r^2\\&
-\frac{9}{2}q^2q_{xx}r_x^2
-59qq_xq_{xx}rr_x
-\frac{35}{2}q_x^2q_{xx}r^2
-\frac{9}{2}qq_x^2r_x^2
-20q_x^3rr_x\\&
-\frac{5i}{2}q^4r^2r_{xxx}
-10iq^4rr_xr_{xx}
-10iq^3q_xr^2r_{xx}
-15iq^3q_{xx}r^2r_x
-\frac{5i}{2}q^4r_x^3\\&
-25iq^3y_xrr_x^2
-25iq^2q_x^2r^2r_x
-\frac{5}{2}q^5r^3r_{xx}
-\frac{35}{8}q^4q_{xx}r^4
-\frac{5}{4}q^5r^2r_x^2\\&
-15q^4q_xr^3r_x
-\frac{35}{4}q^3q_x^2r^4
-\frac{5}{16}q^7r^6
-\frac{15i}{8}q^6r^4r_x
\end{align*}
Note the sign difference of the term $+q^3r_xr_{xxx}$ to all others with four derivatives lying upon them in the gauge transformed equation. This does not seem to be a mistake originating from the derivation of the equation.
\end{enumerate}

    \pagebreak
	\bibliography{bibliography.bib}
\end{document}